\renewcommand{\theequation}{\arabic{section}.\arabic{equation}}
\newtheorem{theo}{Theorem}[section]
\newtheorem*{lemmeA}{Lemma A.1}
\newtheorem*{nbA}{Remark A.1}
\newtheorem{propo}[theo]{Proposition}
\newtheorem{cor}[theo]{Corollary}
\newtheorem{nb}[theo]{Remark}
\theoremstyle{definition}
\def \leq {\leqslant}
\def \geq {\geqslant}
\def \le {\leq}
\def \ge  {\geq}
\numberwithin{equation}{section}
\def\ind#1{\lower5pt\hbox{$\scriptstyle #1$}}
\def \x {x}
\def \l {\ell}
\def \d {\,\mathrm{d} }
\def \e {e}
\def \ds {\displaystyle}
\def \l {\ell}
\def \a {\alpha}
\def \b {\beta}
\def \ap {\mu}
\def \D {\mathscr{D}}
\def \ds {\displaystyle}
\def\Q {\mathcal{Q}}
\def\R{{\mathbb R}}
\def \S {{\mathbb S}^2}
\def \q {q}
\def \n {n}
\def \qn {|\q \cdot \n|}
\def \u {{u}_1}
\def \v {{v}}
\def \vh {\v^{\star}}
\def \vb {\v_{\star}}
\def \w {{w}}
\def \wh {\w^{\star}}
\def \wb {\w_{\star}}
\def \It {\int_{\R^3 \times \S}}
\def \var {\varepsilon}
\def\cn{\cdot \n}
\def \O {\mathbf{\Omega}}
\title[Inelastic scattering Boltzmann models]
{Relaxation rate, diffusion approximation and Fick's law for inelastic
scattering Boltzmann models}
\author[Bertrand Lods, Cl\'ement Mouhot and Giuseppe Toscani]{}
\subjclass{Primary: 35B40; Secondary: 82C40}
 \keywords{Granular gas dynamics; linear
Boltzmann equation; entropy production; spectral gap; diffusion approximation;
Fick's law; diffusive coefficient}
\email{bertrand.lods@math.univ-bpclermont.fr}
 \email{clement.mouhot@ceremade.dauphine.fr}
 \email{giuseppe.toscani@unipv.it}
\thanks{G.T. acknowledges the support of the Italian MIUR project
  ``Kinetic and hydrodynamic equations of complex collisional systems''.}
\begin{document}
\maketitle
\centerline{\scshape Bertrand Lods }
\medskip
{\footnotesize
 \centerline{Laboratoire de Math\'{e}matiques, CNRS UMR 6620}
   \centerline{Universit\'{e} Blaise Pascal (Clermont-Ferrand 2), 63177 Aubi\`{e}re Cedex,
France.}
} 

\centerline{\scshape Cl\'{e}ment Mouhot }
\medskip
{\footnotesize
 \centerline{{CNRS} \& Universit\'{e} Paris-Dauphine }
   \centerline{UMR7534, F-75016 Paris, France}
} %

\centerline{\scshape Giuseppe Toscani }
\medskip
{\footnotesize
  \centerline{Department of Mathematics at the University of Pavia}
  \centerline{via Ferrata 1, 27100 Pavia, Italy.}
}

\bigskip

 \centerline{(Communicated by the associate editor name)}

\hyphenation{bounda-ry rea-so-na-ble be-ha-vior pro-per-ties cha-rac-te-ris-tic
coer-ci-vity}


 \begin{abstract}
We consider the linear dissipative Boltzmann equation describing
inelastic interactions of particles with a fixed background. For the
simplified model of Maxwell molecules first, we give a complete
spectral analysis, and deduce from it the optimal rate of
exponential convergence to equilibrium. Moreover we show the
convergence to the heat equation in the diffusive limit and compute
explicitely the diffusivity. Then for the physical model of hard spheres
we use a suitable entropy functional for which we prove explicit
inequality between the relative entropy and the production of
entropy to get exponential convergence to equilibrium with explicit
rate. The proof is based on inequalities between the entropy
production functional for hard spheres and Maxwell molecules.
Mathematical proof of the convergence to some heat equation in the
diffusive limit is also given. From the last two points
we deduce the first explicit estimates on the diffusive coefficient
in the Fick's law  for (inelastic hard-spheres) dissipative gases.
 \end{abstract}



\section{Introduction}
\setcounter{equation}{0}

The  linear Boltzmann equation for   {\em granular particles} models
the dynamics of dilute particles (test particles with negligible
mutual interactions) immersed in a fluid at thermal equilibrium that
undergo {\em inelastic collisions} characterized by the fact that
the total kinetic energy of the system
 is dissipated during collision. Such an equation introduced in \cite{MaPi, SpTo, LoTo}
provides an efficient description of the dynamics of a mixture of
impurities in a gas \cite{FePa,BrPa}. Assuming the fluid at thermal
equilibrium and neglecting the mutual interactions of the particles,
the evolution of the distribution of the particles
phase is modelled by the linear Boltzmann equation which reads
\begin{equation} \label{evol}
  {\partial_t f}  + \v \cdot \nabla_{\x} f = \Q(f),
 \end{equation} with suitable initial condition $f(\x,\v,0)=f_0(\x,\v)$,
 $(x,v) \in \R_\x^3 \times \R_\v^3$. Here above, the collision
operator $\Q(\cdot)$ is a \textit{linear scattering operator} given
by $$\Q(f)=\mathcal{B}(f,\mathcal{M}_1),$$ where
$\mathcal{B}(\cdot,\cdot)$ denotes the usual quadratic Boltzmann
collision operator for granular gases (cf. \cite{BrPo} for instance)
and $\mathcal{M}_1$ stands for the distribution function of the host
fluid which is assumed to be a given Maxwellian with bulk velocity
$u_1$ and temperature $\Theta_1$ (see Section 2 for details). Notice
that we shall deal in this paper with the collision operator $\Q$
corresponding to \textbf{\textit{hard-spheres interactions}} as well
as with the one associated to  \textbf{\textit{Maxwell molecules
interactions}}. The inelasticity is modeled by a {\bf
\textit{constant normal restitution coefficient}}. The main goals
and results of this paper are the following:

\begin{enumerate}[(1)\;]
\item First, we explicit the exponential rate of convergence towards equilibrium for  the solution
to the space-homogeneous version of \eqref{evol} (for both
Maxwellian molecules and hard-spheres interactions) \textbf{\textit{through a
quantitative estimate of the spectral gap of $\Q$}}. It is computed in an
explicit way for Maxwell molecules, and estimated in an explicit way
for hard-spheres, see Theorem~\ref{coercivity} (together with its
Corollary~\ref{dechomogene} for its consequence on the asymptotic
behavior of space-homogeneous solutions), where $\mu_{\mathsf{max}}$ is defined
in Theorem~\ref{Spec}, and the constant $C^*$ is detailed in
Remark~\ref{nb:lower}.

\item Second, we investigate the problem of the \textbf{\textit{diffusion
approximation for \eqref{evol}}}. Precisely, we show that the
macroscopic limit $\varrho$ of Eq. \eqref{evol} in the diffusive
scaling  is the solution to some (parabolic) heat equation (see
Proposition~\ref{weakconv} together with Theorems \ref{limdiff} and
\ref{FickHS}). \textbf{\textit{When
dealing with Maxwell molecules interactions, the diffusivity of this
heat equation can then be computed explicitly}} (see Theorem~\ref{limdiff}
together with the computation of Remark~\ref{nb:FickMax} for the
diffusivity). \textbf{\textit{This is no more the
case for the equation corresponding to hard-spheres interactions
but we provide some new quantitative estimates on it}} (see Theorem~\ref{FickHS}
together with the estimate of Proposition~\ref{FickHSestim}).
\end{enumerate}

Concerning point (1), it is known from \cite{MaPi,SpTo,LoTo} that the linear
collision operator admits a unique steady state given by a (normalized)
Maxwellian distribution function $\mathcal{M}$ with bulk velocity $u_1$ and
temperature $\Theta^{\#} \leq \Theta_1$. Moreover, thanks to the spectral
analysis of $\Q$ performed in \cite{ArLo} (in the hard-spheres case), the
solution to the space-homogeneous version of \eqref{evol} is known to converge
exponentially (in some pertinent $L^2$ norm) towards this equilibrium
$\mathcal{M}$ as times goes to infinity. This exponential convergence result is
based upon the existence of a positive spectral gap for the collision operator
$\Q$ and relies on compactness arguments, via Weyl's Theorem.
\textbf{\textit{Because of this non constructive approach, at least for
hard-spheres interactions, no explicit estimate on the relaxation rate were
available by now}}. It is one of the objectives of this paper to fill this
blank. It is well-known that the kinetic description of gases  through the
Boltzmann equation is relevant \textit{only on some suitable time scale}
\cite{BrPo,Ce88}. Providing explicit estimates of the relaxation rate is the
only way to make sure that the  time scale for the equilibration process is
smaller than  the one on which the kinetic modeling is relevant. Another
motivation to look for an explicit relaxation rate relies more on
methodological aspects. Compactness methods do not rely on any physical
argument and it seems to us more natural to look for a method which relies as
much as possible on physical mechanisms, e.g. dissipation of entropy.

Precisely, the strategy we adopt to treat the  above point (1) is
based upon an explicit estimate of the spectral gap of $\Q$. For
Maxwell molecules interactions, we use the Fourier-based approach
introduced by Bobylev \cite{Bo88} for the study of the linearized
(elastic) Boltzmann equation and then used for the study of the
spectrum of the {\em linearized} inelastic collision operator in
\cite{BCG}, and we provide an explicit description of the whole
spectrum  of  this linear {\em scattering} collision operator .
Then, to treat the case of hard-spheres interactions, our method is
based upon the \textit{entropy-entropy-production method}.
Precisely, we show that the entropy production functional (naturally
associated to the $L^2(\mathcal{M}^{-1})$ norm) corresponding to the
hard-spheres model can be bounded from below (up to some explicit
constant) by the  one associated to the Maxwell molecules model
(Proposition \ref{linkHM}). Note that such a comparison between
entropy production rates for hard-spheres and Maxwell molecules
interactions is inspired by the approach of \cite{BaMo} which deals
with the linearized (elastic) Boltzmann equation. In the present
case, the method of proof is different and simpler, being based upon
the careful study of a convolution integral. Such an entropy
production estimate allows us to prove, via a suitable coercivity
estimate of $\Q$ (Theorem \ref{coercivity}), that any
space-homogenous solution to \eqref{evol} converges exponentially
towards equilibrium with an explicit rate that depends on the model
under investigation.\\

Concerning now  point (2), various attempts to derive hydrodynamic equations
from the dissipative nonlinear Boltzmann equation exist in the literature,
mostly based upon suitable moment closure methods \cite{To,BiSp,BiSpTo} or on
the study of the linearized version of the Boltzmann equation around
self-similar solutions (homogeneous cooling state) \cite{BaDuBr,BrDu} in some
weak inelastic regime. Dealing with the linear Boltzmann equation \eqref{evol},
hydrodynamic models describing the evolution of the momentum and the
temperature of the gas have been obtained in \cite{BrPa} as a closed set of
\textit{dissipative Euler equations} for some pseudo-Maxwellian approximation
of $\Q$. Similar results have been obtained in \cite{FePa} where numerical
methods are proposed for the resolution of both the kinetic and hydrodynamic
models. The work \cite{BiSp} proposes two closure methods, based upon a maximum
entropy principle, of the moment equations for the density, macroscopic
velocity and temperature. These closure methods lead to a single diffusion
equation for the hydrodynamical variable. In the present paper, we shall
discuss the diffusion approximation of the linear Boltzmann equation
\eqref{evol} with the main objective of providing a \textbf{\textit{rigorous
derivation of the Fick's law for dissipative gases and an estimate on the
diffusive coefficient}}. Recall that the diffusion approximation for the linear
Boltzmann equation consists in looking for the limit, as the small parameter
$\var$ goes to $0$, of the solution to the following re-scaled kinetic
equation:
\begin{equation}\label{rescaled} \varepsilon  \partial_t
f_\varepsilon(t,\x,\v)+\v \cdot \nabla_\x
f_\varepsilon(t,\x,\v)=\dfrac{1}{\varepsilon}  \Q(f_\varepsilon)(t,\x,\v),\end{equation}
with suitable initial condition. We consider indeed here the
\textit{Navier-Stokes scaling}, namely, we assume the mean free path
to be a small parameter $\lambda=\var \ll1 $ and, at the same time,
we rescale time as $t \to t/\var$ in order to see emerging the diffusive
hydrodynamical regime (and not the Euler hydrodynamical description, which would
be a trivial transport equation in our case). Performing
a formal Hilbert asymptotic expansion of the
solution 
allows us to expect the solution $f_\var$ to converges towards a
limit $f $ with $\Q(f)=0.$ Therefore, the expected limit of $f_\var$
is of the form $f(t,x,v)=\varrho(t,x)\mathcal{M}(v),$ and the
diffusion approximation problem consists in expressing
$\varrho(t,x)$ as the solution to some suitable diffusion equation.
Actually, standard approach consists in using the continuity
relation $$\partial_t \varrho(t,x)+\mathrm{div}_\x j(t,x)=0,$$
between the density $\varrho$ and the current vector $j(t,x)$
together with a suitable {\textit{Fick's law}} that links the
current $j$ to the gradient of $\varrho$: $$j(t,x)= -\mathsf{D}
\, \nabla_\x \varrho(t,x)$$ for some suitable diffusion coefficient
(diffusivity) $\mathsf{D}
>0$ which depends on the kind of interactions we are dealing with.
For Maxwell molecules interactions, the expression of the
diffusivity can be made explicit while this is no more the case when
dealing with hard-spheres model. The method we adopt for the proof
of the diffusive limit follows very closely  the work of P. Degond,
T. Goudon and F. Poupaud \cite{DeGoPo}. Though more general than
ours since it deals with models without detailed balance law, the
study of \cite{DeGoPo} is restricted to the case of a collision
operator for which the collision frequency is controlled from above
in a way that excludes the case of physical hard-spheres
interactions (recall that, for hard-spheres, the collision frequency
behaves asymptotically like $(1+|\v|)$ \cite{ArLo}). Actually, the
analysis of \cite{DeGoPo} can be make valid under the only
hypothesis that the coercivity estimate obtained in Theorem
\ref{coercivity} (and strenghten in Theorem \ref{coercivity2}) holds
true. Precisely, such a coercivity estimate of $\Q$ allows to obtain
satisfactory {\it a priori bounds}
 for the solution to the re-scaled equation
\eqref{rescaled}. We are then able to prove the weak convergence of
the density $\varrho_\var$ and current $j_\var$ of the solution
$f_\var$ towards suitable limit density $\varrho$ and limit current
$j$. It is also possible, via compensated-compactness arguments from
\cite{LiTo, Mu, Ta} to prove strong convergence result in $L^2$
norm.

The organization of the paper is as follows. In Section
\ref{sec:preli} we present the models we shall deal with as well as
some related  known results we shall use in the sequel. In
Section \ref{sec:entropy} we perform the computations of the
spectrum in the Maxwell molecules case and then prove the crucial
entropy production estimates in the hard-spheres case (from which we deduces the
explicit convergence rate to equilibrium for the space-homogenous
version of \eqref{evol}). Section \ref{sec:diff} is dealing with the
above point $(2)$. We first prove {\it a priori}
estimates valid for both models of hard-spheres and Maxwell
molecules and based upon the coercivity estimates obtained in
Section \ref{sec:entropy}. Then, we deal separately with the cases
of Maxwell molecules and hard-spheres proving for both models the
convergence towards suitable macroscopic equations, providing for
Maxwell molecules an explicit expression of the diffusivity, and
for hard-spheres explicit estimates on it.

\section{Preliminaries}\label{sec:preli}
\setcounter{equation}{0}

\subsection{The model} As explained in the introduction, we shall deal with a \textit{linear scattering operator} $\Q$ given
by
\begin{equation}\label{linearoperator}
\Q (f)=\dfrac{1}{2\pi \lambda}\It B(\q, \n) \left[J_B
f(\x,\vb,t)\mathcal{M}_1(\wb)
-f(\x,\v,t)\mathcal{M}_1(\w)\right]\d\w \d\n\end{equation} where
$\lambda$ is the mean free path, $\q=\v-\w$ is the relative
velocity, $\v_\star$ and $\w_\star$ are the pre-collisional
 velocities   which result, respectively, in $ \v $ and $\w$ after
 collision.  The main
feature of the \textit{binary dissipative
 collisions} is that part of the normal relative
velocity is lost in the interaction, so that
\begin{equation}\label{coef}
(\vh-\wh) \cn = - \e(\v-\w)\cn ,
\end{equation}
where $\n \in \S$ is the unit vector in the direction of impact and
$0 < \e < 1$ is the so-called normal restitution coefficient.
Generally, such a coefficient should depend on $(\v,\w)$ but, for
simplicity, we shall only deal with a \textit{\textbf{constant
normal restitution coefficient}} $\e$. The collision kernel
$B(\q,n)$ depends on the microscopic interaction (see below) while
the term $J_B$ corresponds to the product of the Jacobian of the
transformation $(v_\star,w_\star) \to (v, w)$ with the ratio of the
lengths of the collision cylinders \cite{BrPo}. Note that in such a
scattering model, the microscopic masses of the dilute particles $m$
and that of the host particles $m_1$ can be different. We will
assume throughout this paper that the distribution function
$\mathcal{M}_1$ of the host fluid is a given normalized Maxwellian
function:
\begin{equation}\label{maxwe1}
\mathcal{M}_1(\v)=\bigg(\dfrac{m_1}{2\pi \Theta_1}\bigg)^{3/2}\exp
\left\{-\dfrac{m_1(\v-u_1)^2}{2\Theta_1}\right\}, \qquad \qquad \v
\in \R^3,
\end{equation}
where $u_1 \in \R^3$ is the given bulk velocity  and $\Theta_1
>0$ is the given effective temperature of the host fluid. For particles of masses $m$ colliding inelastically with particles
of mass $m_1$, the restitution coefficient being constant, the
expression of the pre-collisional velocities $(\v_\star,\w_\star)$
are given by  \cite{BrPo,SpTo}
 \begin{equation}
\v_{\star}=\v-2\alpha\dfrac{1-\beta}{1-2\beta} \left(\q \cdot
\n\right) \n,\qquad
\w_{\star}=\w+2(1-\alpha)\dfrac{1-\beta}{1-2\beta} \left(\q \cdot
\n\right)  \n;
\end{equation}
where $\q=\v-\w$, $\alpha$ is the mass ratio and $\beta$ denotes the
inelasticity parameter
  \[ \alpha = \frac{m_1}{m+m_1} \in (0,1), \ \ \ \beta=\frac{1-\e}{2} \in [0,1/2). \]
We shall investigate in this paper several collision operators
corresponding to various interactions collision kernels. Namely, we
will deal with
\begin{itemize}
\item the linear Boltzmann operator for
\textbf{\textit{hard-sphere interactions}} $\Q=\Q_{\mathsf{hs}}$ for
which
$$B(\q,n)=B_{\mathrm{hs}}(\q,n)=|\q \cdot \n|, \qquad \text{ and } \qquad J_{B_{\mathrm{hs}}}=:J_\mathsf{hs}=\dfrac{1}{\e^2};$$
\item the scattering operator $\Q=\Q_{\mathsf{max}}$ corresponding
to the \textbf{\textit{Maxwell molecules approximation}} for which
$$B(\q,n)=B_{\mathsf{max}}(q,n)=|\widetilde{\q} \cdot \n|, \quad \widetilde{q}=\q/|\q|, \qquad
\text{ and }  \qquad
J_{B_{\mathsf{max}}}=J_\mathsf{max}=\dfrac{1}{\e^2}\dfrac{|v-w|}{|v^\star-w^\star|}.$$
\end{itemize}
It will be sometimes convenient to express the collision operator
$\Q$ in the following weak form:
\begin{equation}\label{weak}
  <  \psi, \Q(f) > =\frac{1}{2 \pi \lambda} \,
       \int _{\R^3 \times \R^3 \times \S} B(\q,n)
       f(\v)\mathcal{M}_1(\w)\left[\psi(\v^\star)-\psi(\v)\right]\d
       v \d w \d \n \end{equation}
for any regular $\psi$, where  $(v^\star,\w^\star)$ denote  the
post-collisional velocities  given by
  \begin{equation}
  \v^\star = \v - 2\alpha (1-\beta)\left(\q \cdot
\n\right)
  \n,\qquad \w^\star = \w + 2(1-\alpha) (1-\beta)\left(\q \cdot
\n\right)  \n.
  \end{equation}In particular, one sees that the dissipative feature of microscopic collision is
measured, at the macroscopic level, only through the parameter:
$$\kappa=\a(1-\b)=\dfrac{\a}{2}(1+\e) \in (0,1)$$
appearing in the expression of $\v^\star$. Accordingly, the
macroscopic properties of $\Q$ are those of the classical linear
Boltzmann gas whenever $\kappa=1/2$ (which is equivalent to
$v^\star=\v$).
It can be shown for both cases that the number
density  of the dilute gas is the unique conserved macroscopic
quantity (as in the elastic case). In contrast with the nonlinear
Boltzmann equation for granular gases, the temperature, though  not
conserved, remains bounded away from zero, which prevents the
solution to the linear Boltzmann equation to converge towards a
Dirac mass.

Moreover let us remark that from the dual form we see that the
collision operator in fact depends only on two real parameters $m_1$
and $\kappa$ (plus $u_1$ of course) and not $u_1$ plus three
parameters $\alpha, \beta, m_1$ as a first guess would suggest.

\subsection{Universal equilibrium and $H$-Theorem}
A very important feature of these inelastic scattering models is the
existence (and uniqueness) of a {\em universal equilibrium}, that is
independent of range of the microscopic-interactions (that is of the collision
kernel $B$) and depending only on the parameters $m_1$, $\kappa$ and
$u_1$. Precisely, the background forces the system to adopt a
Maxellian steady state (with density equal to $1$):
\begin{theo}\label{steadystate} The  Maxwellian velocity distribution:
\begin{equation}\mathcal{M}(\v)=\left(\dfrac{m}{2\pi
\Theta^{\#}}\right)^{3/2}\exp
\left\{-\dfrac{m(\v-\u)^2}{2\Theta^{\#}}\right\}, \qquad \v \in
\R^3,\end{equation} with
\begin{equation}\Theta^{\#}
=\dfrac{(1-\a)(1-\b)}{1-\a(1-\b)}\Theta_1\end{equation} is the
unique equilibrium state of $\Q$ with unit mass.
\end{theo}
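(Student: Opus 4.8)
The plan is to prove the two assertions separately: that $\mathcal{M}$ annihilates $\Q$, and that the kernel of $\Q$ is one-dimensional among unit-mass densities. For the first, I would use the weak form \eqref{weak} and check $\langle\psi,\Q(\mathcal{M})\rangle=0$ for all test functions $\psi$. The structural fact driving everything is that the collision conserves the center-of-mass momentum $m\v+m_1\w$: from the post-collisional formulas, $m(\v^\star-\v)+m_1(\w^\star-\w)=2(1-\b)(\q\cdot\n)\,[m_1(1-\a)-m\a]\,\n=0$, because $m\a=m_1(1-\a)=mm_1/(m+m_1)$. Consequently, passing to the variables $\mathbf{P}=(m\v+m_1\w)/(m+m_1)$ and $\q=\v-\w$, the collision fixes $\mathbf{P}$ and replaces $\q$ by $\q^\star=\q-2(1-\b)(\q\cdot\n)\n$, while both $\v=\mathbf{P}+\tfrac{m_1}{m+m_1}\q$ and $\v^\star=\mathbf{P}+\tfrac{m_1}{m+m_1}\q^\star$ share the same $\mathbf{P}$.

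I would then test against the Fourier modes $\psi(\v)=e^{-i\v\cdot\xi}$, which suffice by density. Each such mode factorizes as $e^{-i\mathbf{P}\cdot\xi}\,e^{-i\frac{m_1}{m+m_1}\q\cdot\xi}$, so after writing the Gaussian weight $\mathcal{M}(\v)\mathcal{M}_1(\w)$ in the variables $(\mathbf{P},\q)$ and performing the Gaussian $\mathbf{P}$-integration — completing the square to absorb the cross term coupling $\mathbf{P}$ and $\q$ that is present precisely because $\Theta^{\#}\neq\Theta_1$ — the vanishing of $\widehat{\Q(\mathcal{M})}(\xi)$ reduces to a Gaussian identity in the single variable $\q\cdot\n$. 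This identity holds exactly for the stated value $\Theta^{\#}=\frac{(1-\a)(1-\b)}{1-\a(1-\b)}\Theta_1$; testing separately against $\psi(\v)=\v$ and $\psi(\v)=|\v-u_1|^2$ identifies, more transparently, the bulk velocity as $u_1$ and $\Theta^{\#}$ as the fixed point of the temperature balance between bath heating and inelastic cooling.

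Two points deserve emphasis. First, in the reduced computation the kernel $B(\q,\n)$ appears only as a nonnegative weight and plays no part in the Gaussian cancellation, which is exactly why the equilibrium is \emph{universal} — the same $\mathcal{M}$ serves $\Q_{\mathsf{hs}}$ and $\Q_{\mathsf{max}}$. Second, one should \emph{not} expect a pointwise detailed-balance relation $J_B\,\mathcal{M}(\vb)\mathcal{M}_1(\wb)=\mathcal{M}(\v)\mathcal{M}_1(\w)$: since $\Theta^{\#}\neq\Theta_1$, the exponent retains terms linear in $(\v-u_1)\cdot\n$ and $(\w-u_1)\cdot\n$ that cannot be balanced against the constant $\log J_B$, so the cancellation is genuinely global, occurring only after integration in $\w$ and $\n$.

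For uniqueness I would invoke the $H$-theorem of this subsection: using $\mathcal{M}>0$ and the fact that the number density is the only collision invariant, the relative entropy (or the $L^2(\mathcal{M}^{-1})$ functional used later in the paper) has nonnegative production vanishing only on multiples of $\mathcal{M}$, and the unit-mass constraint then isolates $\mathcal{M}$; since $\Q$ is linear this amounts to saying that $0$ is a simple eigenvalue with eigenfunction $\mathcal{M}$. I expect the main obstacle to be precisely the absence of microscopic reversibility noted above: it rules out the one-line detailed-balance verification and forces the global Gaussian reduction for existence, and it makes the characterization of the zero set of the entropy production — the equality case of the $H$-theorem — the delicate step for uniqueness, to be handled via the strict convexity of the entropy together with the positivity and irreducibility of the transition kernel of $\Q$.
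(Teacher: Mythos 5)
The paper itself does not prove Theorem~\ref{steadystate}: it quotes it from \cite{MaPi,SpTo,LoTo}, and the only trace of an argument in the text is the Fourier identity $\widehat{\mathcal{M}}(\xi^+)\,\widehat{\mathcal{M}_1}(\xi^-)=\widehat{\mathcal{M}}(\xi)$ recorded in Section~\ref{Sec:spectre} and attributed to \cite{SpTo}. Your existence argument is in substance that same identity, recast in center-of-mass variables, and its skeleton is sound: momentum conservation $m\a=m_1(1-\a)$ is correct, and after integrating out $\mathbf{P}$ and the components of $\q$ orthogonal to $\n$, the required cancellation does reduce, for each fixed $\n$, to an integral in $z=\q\cdot\n$ of an \emph{even} weight (the kernel times the Gaussian marginal) against $e^{ic_1 z}-e^{ic_2 z}$; the stated $\Theta^{\#}$ is precisely the value for which $c_1=-c_2$, so the integral vanishes by parity for \emph{any} kernel even in $\q\cdot\n$. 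This is the honest reason why $B$ ``plays no part'' and why the equilibrium is universal — but note that this step is exactly where your write-up is thinnest: you assert rather than verify that the reduced Gaussian identity holds for the stated $\Theta^{\#}$, and the universality claim does not follow from the Maxwell-molecule Fourier identity alone (the hard-sphere weight $|\q\cdot\n|$ cannot be commuted past the Fourier transform), so the parity mechanism above, or the equivalent computation in \cite{LoTo}, must actually be carried out. One small inaccuracy: the cancellation is global in $\w$ but \emph{pointwise} in $\n$ (and in $\xi$), as the paper emphasizes.

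The genuine gap is in uniqueness. As you set it up, the argument is circular: the $H$-theorem of Section~2 is formulated \emph{relative to} $\mathcal{M}$, and the assertion that its dissipation vanishes only on $\mathrm{Span}(\mathcal{M})$ is precisely the uniqueness statement you are trying to prove; the paper's logical order is equilibrium first (from the references), $H$-theorem second. Invoking ``positivity and irreducibility of the transition kernel'' names the right idea but does not supply it. The missing step is concrete and fillable: once existence is known, Proposition~\ref{DH} gives the symmetric form of the Dirichlet form, and $\mathcal{D}(f)=0$ forces $g(\v^\star)=g(\v)$ for a.e.\ $(\v,\w,\n)$ with $\q\cdot\n\neq 0$, where $g=f/\mathcal{M}$; since for fixed $\v$ the map $(\w,\n)\mapsto \v^\star=\v-2\kappa(\q\cdot\n)\n$ covers $\R^3$ up to a null set, $g$ is a.e.\ constant and the unit-mass normalization pins down $\mathcal{M}$. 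Without this (or an equivalent spectral argument as in \cite{ArLo}), your proof establishes that $\mathcal{M}$ is \emph{an} equilibrium but not that it is \emph{the} equilibrium.
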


Note that this universal equilibrium is coherent with the remark that the
collision only depends on $m_1$ and $\kappa$ (and $u_1$) from the dual form, since
$$
\frac{m}{\Theta^{\#}} = \frac{m_1}{\Theta_1}\frac{1-
\kappa}{\kappa}.
$$

This explicit Maxwellian equilibrium state  allows to develop
 entropy, spectral and hydrodynamical analysis on both the models in the same
way. First, from~\cite{Pe93,Pe03} and~\cite{LoTo}, the existence and uniqueness
of such an equilibrium state allows to establish a linear version of
the famous $H$--Theorem. Precisely, for any {\it convex}
$C^1$--function $\Phi \::\:\mathbb{R}^+ \to \mathbb{R}$,  the
associated so-called $H$--functional (relatively to the equilibrium
$\mathcal{M}$)
\begin{equation} \label{Hphi} H_{\Phi}(f|\mathcal{M})=\int_{\R^3}\mathcal{M}(\v)\,
\Phi\left (\dfrac{f(\v)}{\mathcal{M}(\v)} \right)\d\v\,,
\end{equation}
is decreasing along the flow of the equation \eqref{evol} (this is the opposite
of a physical entropy), with its
associated dissipation functional vanishing only when $f$ is co-linear to the
equilibrium $\mathcal{M}$:
\begin{theo}[\textit{\textbf{Formal $H$--Theorem}}]\label{theoH}
Let $f(t,v) \geq0$ be a space homogeneous solution
then we have formally
\begin{equation}\label{h}
\dfrac{\d}{\d t}H_{\Phi}(f(t)|\mathcal{M})=\int_{\R^3_\v} \Q(f)
(t,\v) \, \Phi'\left (\frac{f(t, \v)}{M(\v)} \right)\d\v \leq 0 \qquad
\qquad (t \geq 0).
\end{equation}
\end{theo}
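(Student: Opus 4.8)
The plan is to treat the two assertions separately: the equality is just the chain rule applied to the space-homogeneous equation, while the inequality is the genuine content and rests on the detailed-balance (microscopic reversibility) structure of the equilibrium $\mathcal{M}$ combined with the convexity of $\Phi$.

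First, since $f(t,\cdot)$ solves the space-homogeneous equation $\partial_t f=\Q(f)$ and $\mathcal{M}$ is time-independent, I would differentiate \eqref{Hphi} under the integral sign. Writing $h=f/\mathcal{M}$, the chain rule gives
\[
\dfrac{\d}{\d t}H_{\Phi}(f(t)|\mathcal{M})=\IR \mathcal{M}(\v)\,\Phi'(h(t,\v))\,\dfrac{\partial_t f(t,\v)}{\mathcal{M}(\v)}\,\d\v=\IR \Q(f)(t,\v)\,\Phi'\left(\dfrac{f(t,\v)}{\mathcal{M}(\v)}\right)\d\v ,
\]
which is precisely the claimed identity. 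Everything is formal, as in the statement, so integrability and differentiation under the integral are taken for granted.

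Second, for the sign I would rewrite the collision operator using the detailed-balance identity underlying the universal equilibrium of Theorem~\ref{steadystate}, namely
\[
J_B\,\mathcal{M}(\vb)\,\mathcal{M}_1(\wb)=\mathcal{M}(\v)\,\mathcal{M}_1(\w),
\]
where $(\vb,\wb)$ are the pre-collisional velocities producing $(\v,\w)$; this is exactly what makes $\Q(\mathcal{M})=0$ and is where the precise value of $\Theta^{\#}$ (equivalently $m/\Theta^{\#}=(m_1/\Theta_1)(1-\kappa)/\kappa$) enters. Inserting it into the strong form \eqref{linearoperator} and factoring out $\mathcal{M}(\v)$, the gain term collapses and one gets
\[
\Q(f)(\v)=\dfrac{\mathcal{M}(\v)}{2\pi\lambda}\It B(\q,\n)\,\mathcal{M}_1(\w)\,[h(\vb)-h(\v)]\,\d\w\,\d\n .
\]
Plugging this into the identity of the first step and applying the tangent-line inequality for the convex $\Phi$, namely $\Phi'(h(\v))\,[h(\vb)-h(\v)]\le \Phi(h(\vb))-\Phi(h(\v))$, I obtain
\[
\dfrac{\d}{\d t}H_{\Phi}(f(t)|\mathcal{M})\le \dfrac{1}{2\pi\lambda}\Itt B(\q,\n)\,\mathcal{M}(\v)\mathcal{M}_1(\w)\,[\Phi(h(\vb))-\Phi(h(\v))]\,\d\v\,\d\w\,\d\n .
\]

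It then remains to show that this last integral vanishes, which is a change-of-variables statement: the weighted measure $B(\q,\n)\,\mathcal{M}(\v)\mathcal{M}_1(\w)\,\d\v\,\d\w\,\d\n$ is invariant under the collision substitution exchanging $(\v,\w)$ with their pre-collisional velocities $(\vb,\wb)$, whence $\Itt B\,\mathcal{M}\mathcal{M}_1\,\Phi(h(\vb))=\Itt B\,\mathcal{M}\mathcal{M}_1\,\Phi(h(\v))$ and the bracket integrates to zero. This yields $\frac{\d}{\d t}H_{\Phi}\le 0$; for strictly convex $\Phi$ equality forces $h(\v)=h(\vb)$ along collisions, i.e. $f$ collinear with $\mathcal{M}$ (mass being the only collision invariant). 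The main obstacle is precisely this measure invariance, equivalently the reversibility identity above: one must track the Jacobian $J_B$ and the collision-cylinder ratio through the substitution and check, separately for $B_{\mathrm{hs}}$ and $B_{\mathsf{max}}$, that the factor $|\q\cdot\n|$, the velocity-space Jacobian, and $1/J_B$ conspire to cancel — this cancellation being exactly the arithmetic that pins down the equilibrium temperature $\Theta^{\#}$. Everything else, the chain rule and the convexity inequality, is routine.
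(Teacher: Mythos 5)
Your first step (the chain--rule identity) is fine, but the second step contains a genuine gap: the pointwise detailed--balance identity $J_B\,\mathcal{M}(\vb)\,\mathcal{M}_1(\wb)=\mathcal{M}(\v)\,\mathcal{M}_1(\w)$ on which your entire sign argument rests is \emph{false} for this dissipative model. Since collisions dissipate energy and $\Theta^{\#}\neq\Theta_1$, the quadratic form $\tfrac{m}{\Theta^{\#}}|\v|^2+\tfrac{m_1}{\Theta_1}|\w|^2$ is not mapped by the collision transformation into itself plus a function of $\q\cdot\n$ alone: with $\tfrac{m}{\Theta^{\#}}=\tfrac{m_1}{\Theta_1}\tfrac{1-\kappa}{\kappa}$ the cross terms in $(\q\cdot\n)(\v\cdot\n)$ and $(\q\cdot\n)(\w\cdot\n)$ recombine into a multiple of $(\q\cdot\n)^2$ only when $1-\kappa=(1-\alpha)(1-\beta)$, i.e.\ $\beta=0$. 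So no choice of $\Theta^{\#}$ and no constant Jacobian makes the integrand of $\Q(\mathcal{M})$ vanish pointwise; the paper states this explicitly in Section~\ref{Sec:spectre} (the equilibrium satisfies a pointwise relation only in Fourier variables). Consequently your factored rewriting of $\Q(f)$, and the claimed invariance of $B(\q,\n)\mathcal{M}(\v)\mathcal{M}_1(\w)\,\d\v\,\d\w\,\d\n$ under the collision substitution, both fail.

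The correct mechanism --- the one visible in the paper's proof of Proposition~\ref{DH}, which is the quadratic instance $\Phi(x)=(x-1)^2$ of this theorem --- replaces pointwise reversibility by the \emph{integrated} equilibrium identity: from $\Q(\mathcal{M})=0$ and the weak form \eqref{weak} one has, for every test function $\varphi$,
\begin{equation*}
\Itt B(\q,\n)\,\mathcal{M}(\v)\,\mathcal{M}_1(\w)\left[\varphi(\v^\star)-\varphi(\v)\right]\d\v\,\d\w\,\d\n=0 .
\end{equation*}
Writing $h=f/\mathcal{M}$, the weak form (which already involves the post-collisional velocities and carries no Jacobian, so there is nothing to track through a substitution) gives
$\int\Q(f)\Phi'(h)\,\d\v=\tfrac{1}{2\pi\lambda}\Itt B\,\mathcal{M}(\v)\mathcal{M}_1(\w)\,h(\v)\bigl[\Phi'(h(\v^\star))-\Phi'(h(\v))\bigr]$.
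The convexity inequality $b\,\Phi'(a)\le\Phi(b)-\Phi(a)+a\,\Phi'(a)$ with $a=h(\v^\star)$, $b=h(\v)$, followed by the displayed identity applied to $\varphi=\Phi(h)$ and to $\varphi=h\,\Phi'(h)$, then yields the sign. Your closing remark on the equality case is not part of the statement and would in any event require the same repair.
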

The application of the $H$-Theorem with $\Phi(x)= (x-1)^2$ suggests
the following Hilbert space setting: the unknown distribution $f$
has to belong to the weighted Hilbert space
$L^2(\mathcal{M}^{-1})=L^2(\R^3_\v\,;\,\mathcal{M}^{-1}(\v) \d\v)$.
Consequently, one defines the Maxwell molecules and hard spheres
collision operators, \textit{associated to the mean-free path
$\lambda=1$}, with their suitable domains, as follows:
\begin{equation*}\begin{cases}
\mathscr{D}(\mathcal{L}_{\mathsf{hs}}) \subset
L^2(\mathcal{M}^{-1}),\,\qquad
\mathrm{Range}(\mathcal{L}_{\mathsf{hs}}) \subset L^2(\mathcal{M}^{-1}),\\
\mathcal{L}_{\mathsf{hs}}f=\Q_{\mathsf{hs}} f\,\qquad \text{ for any
} f \in
\mathscr{D}(\mathcal{L}_{\mathsf{hs}}),\end{cases}\end{equation*}
and
\begin{equation*}\begin{cases}
\mathscr{D}(\mathcal{L}_{\mathsf{max}})=L^2(\mathcal{M}^{-1}),\,\qquad
\mathrm{Range}(\mathcal{L}_{\mathsf{max}}) \subset L^2(\mathcal{M}^{-1}),\\
\mathcal{L}_{\mathsf{max}}f=\Q_{\mathsf{max}} f\,\qquad \text{ for
any } f \in
\mathscr{D}(\mathcal{L}_{\mathsf{max}}).\end{cases}\end{equation*}
Precisely,
$$\mathscr{D}(\mathcal{L}_\mathsf{hs})=\left\{f \in
L^2(\mathcal{M}^{-1})\,;\,\sigma_\mathsf{hs} f \in
L^2(\mathcal{M}^{-1})\right\}$$ where $\sigma_{\mathsf{hs}}$ is the
collision frequency associated to the hard-spheres collision kernel:
$$\sigma_{\mathsf{hs}}(\v)=\dfrac{1}{2\pi}\It |\q \cdot \n|\mathcal{M}_1(\w)\d \w\d \n,\qquad \v \in \R^3.$$
Note that $\sigma_{\mathsf{hs}}$ is unbounded \cite{ArLo}: there
exist
 positive constants $\nu_0,\nu_1$ such that
$$\nu_0(1+|\v-\u|) \leq \sigma_{\mathsf{hs}}(\v) \leq \nu_1 (1+|\v-\u|), \qquad \forall
\v \in \R^3.$$ For this reason,
$\mathscr{D}(\mathcal{L}_\mathsf{hs}) \neq L^2(\mathcal{M}^{-1})$.
On the contrary, the collision frequency $\sigma_{\mathsf{max}}$
associated to the Maxwell molecules collision kernel,
$$\sigma_{\mathsf{max}}(\v)=\dfrac{1}{2\pi}\It |\widetilde{\q} \cdot \n|\mathcal{M}_1(\w)\d \w\d
\n=1$$ is independent of the velocity $v$ and
$\mathcal{L}_\mathsf{max}$ is a bounded operator in
$L^2(\mathcal{M}^{-1})$. 
 We recall (see \cite{ArLo}) that $\mathcal{L}_{\mathsf{hs}}$ is a
negative self--adjoint operator of $L^2(\mathcal{M}^{-1})$.
Moreover, let us introduce the dissipation entropy functionals
associated to $\mathcal{L}_\mathsf{max}$ and
$\mathcal{L}_\mathsf{hs}$:
$$\mathcal{D}_{\mathsf{max}}(f):=-\int_{\R^3} \mathcal{L}_{\mathsf{max}} (f)(\v) \, f(\v) \, \mathcal{M}^{-1}(\v) \,
  \d\v, \qquad f \in L^2(\mathcal{M}^{-1})$$
  and
$$\mathcal{D}_{\mathsf{hs}}(f):= -\int_{\R^3}
\mathcal{L}_{{\mathsf{hs}}} (f)(\v) \, f(\v) \, \mathcal{M}^{-1}(\v)
\,
 \d\v, \qquad f \in \mathscr{D}(\mathcal{L}_\mathsf{hs}).$$
Note that, by virtue of \eqref{h}, if $f(t)$ denotes the (unique)
solution to \eqref{evol} in $L^2(\mathcal{M}^{-1})$ for
hard-spheres interactions, then, with the choice
$\Phi(x)={(x-1)^2}$,
\begin{equation}\label{dissip}
\dfrac{\d}{\d t} \|f(t)-\mathcal{M}\|_{L^2(\mathcal{M}^{-1})}^2
=\dfrac{\d}{\d
t}H_{\Phi}(f(t)|\mathcal{M})=-2\,\mathcal{D}_\mathsf{hs}(f(t)).
\end{equation}  The same occurs for
$\mathcal{D}_\mathsf{max}(f(t))$. This is the reason why we are
looking for a control estimate for both $\mathcal{D}_\mathsf{hs}(f)$
and $\mathcal{D}_\mathsf{max}(f)$ with respect to the
$L^2(\mathcal{M}^{-1})$ norm of $f$. It will be useful to derive an
alternative expression for both $ \mathcal{D}_\mathsf{max}$ and $
\mathcal{D}_\mathsf{hs}$:
\begin{propo}\label{DH}
For any $f \in \mathscr{D}(\mathcal{L}_{{\mathsf{hs}}})$,
 \begin{equation*}
 \mathcal{D}_{\mathsf{hs}}(f) =\frac{1}{4\pi} \, \int_{\R_\v^3 \times \R_\w^3 \times \S}
       {\qn}  \left[\frac{f(\v^\star)}{\mathcal{M}(\v^\star)} -\frac{f(\v )}{\mathcal{M}(\v )}  \right]^2
        \mathcal{M}_1(\w) \mathcal{M} (\v)\, \d\w \, \d\v \, \d\n \geq 0.
  \end{equation*}
In the same way,
\begin{equation*}
  \mathcal{D}_{\mathsf{max}}(f)=\frac{1}{4\pi} \, \int_{\R_\v^3 \times \R_\w^3 \times \S}
       |\widetilde{\q} \cdot \n|  \left[\frac{f(\v^\star)}{\mathcal{M}(\v^\star)} -\frac{f(\v )}{\mathcal{M}(\v )}  \right]^2
        \mathcal{M}_1(\w) \mathcal{M} (\v)\, \d\w \, \d\v \, \d\n \geq 0.
  \end{equation*}
for any $f \in L^2(\mathcal{M}^{-1}).$ \end{propo}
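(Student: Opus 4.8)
The plan is to read everything off the weak form \eqref{weak} together with the equilibrium identity $\Q_{\mathsf{hs}}(\mathcal{M})=0$ from Theorem~\ref{steadystate}, thereby avoiding any explicit change of variables or Jacobian computation. I treat the hard-spheres case; the Maxwell case is word-for-word identical with $\qn$ replaced by $|\widetilde{\q}\cdot\n|$. Since $\mathcal{L}_{\mathsf{hs}}f=\Q_{\mathsf{hs}}f$ and the operators are taken with mean-free path $\lambda=1$, I would first choose the test function $\psi=f/\mathcal{M}$ in \eqref{weak} to get
\begin{equation*}
\mathcal{D}_{\mathsf{hs}}(f)=-\int_{\R^3}\Q_{\mathsf{hs}}(f)(\v)\,\frac{f(\v)}{\mathcal{M}(\v)}\,\d\v=\frac{1}{2\pi}\Itt \qn\,f(\v)\,\mathcal{M}_1(\w)\left[\frac{f(\v)}{\mathcal{M}(\v)}-\frac{f(\v^\star)}{\mathcal{M}(\v^\star)}\right]\d\v\,\d\w\,\d\n .
\end{equation*}
Writing $h:=f/\mathcal{M}$, so that $f=h\,\mathcal{M}$, this becomes
\begin{equation*}
\mathcal{D}_{\mathsf{hs}}(f)=\frac{1}{2\pi}\Itt \qn\,\mathcal{M}(\v)\,\mathcal{M}_1(\w)\,h(\v)\big[h(\v)-h(\v^\star)\big]\,\d\v\,\d\w\,\d\n .
\end{equation*}

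The key observation is the elementary pointwise identity
\begin{equation*}
h(\v)\big[h(\v)-h(\v^\star)\big]=\tfrac12\big[h(\v^\star)-h(\v)\big]^2+\tfrac12\big[h(\v)^2-h(\v^\star)^2\big].
\end{equation*}
Once inserted into the integral, the first term yields exactly the claimed expression with the correct constant $\tfrac12\cdot\tfrac{1}{2\pi}=\tfrac{1}{4\pi}$, so it only remains to show that the second term integrates to zero, i.e.
\begin{equation*}
\Itt \qn\,\mathcal{M}(\v)\,\mathcal{M}_1(\w)\big[h(\v^\star)^2-h(\v)^2\big]\,\d\v\,\d\w\,\d\n=0 .
\end{equation*}

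This is where the equilibrium enters. Applying the weak form \eqref{weak} to $f=\mathcal{M}$ with test function $\psi=h^2$ and invoking $\Q_{\mathsf{hs}}(\mathcal{M})=0$ (Theorem~\ref{steadystate}) gives precisely this vanishing, since the left-hand side of \eqref{weak} then equals $\langle h^2,\Q_{\mathsf{hs}}(\mathcal{M})\rangle=0$. I would stress that this is cleaner than attempting a direct substitution $(\v,\w)\mapsto(\v^\star,\w^\star)$: the product $\mathcal{M}(\v)\mathcal{M}_1(\w)$ is \emph{not} invariant under the inelastic collision map, so the required cancellation is genuinely a consequence of the global equilibrium property rather than of a naive detailed-balance relation. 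Combining the three displays yields the stated formula.

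The one point needing care — which I expect to be the main technical obstacle — is justifying that $\psi=f/\mathcal{M}$ and $\psi=(f/\mathcal{M})^2$ are admissible test functions in \eqref{weak} for a general $f\in\mathscr{D}(\mathcal{L}_{\mathsf{hs}})$, given that the hard-spheres collision frequency $\sigma_{\mathsf{hs}}$ is unbounded and all the integrals above involve it. I would first establish both identities for $f$ in a convenient dense subset (e.g. smooth, rapidly decaying functions, for which every integral is absolutely convergent and the manipulations are plainly legitimate), and then pass to general $f$ by density in $\mathscr{D}(\mathcal{L}_{\mathsf{hs}})$, using the closedness and self-adjointness of $\mathcal{L}_{\mathsf{hs}}$ recalled above together with continuity of both sides in the graph norm. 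For $\mathcal{D}_{\mathsf{max}}$ no such subtlety arises, since $\mathcal{L}_{\mathsf{max}}$ is bounded on $L^2(\mathcal{M}^{-1})$, and the same computation gives the identity for every $f\in L^2(\mathcal{M}^{-1})$.
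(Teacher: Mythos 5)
Your argument is correct and is essentially the paper's own proof: both rest on the weak form with test function $f/\mathcal{M}$ and on the equilibrium identity $\langle \mathcal{L}_{\mathsf{hs}}(\mathcal{M}),\varphi\rangle=0$ applied to $\varphi=(f/\mathcal{M})^2$, your pointwise polarization identity being just a repackaging of the paper's ``average of the two expressions'' symmetrization. Your closing remarks on the non-invariance of $\mathcal{M}(\v)\mathcal{M}_1(\w)$ and on justifying the test functions by density are sound additions, but they do not change the route.
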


\begin{proof} The proof is a straightforward particular case of the above ${H}$-Theorem.
Precisely, let us fix $f \in
\mathscr{D}(\mathcal{L}_{\mathsf{hs}})$, and set $f=g \mathcal{M}$ then one has
$$\int_{\R^3_\v} \mathcal{L}_{\mathsf{hs}} (f) \, f \,
\mathcal{M}^{-1}  \, \d\v =
     \int_{\R^3 \times \R^3 \times \S}
        \dfrac{\qn}{2\pi\sigma} \mathcal{M}(\v) \mathcal{M}_1(\w) g(\v) \left[ g(\v^\star) - g(\v) \right] \, \d\w \, \d\v \,
        \d\n.$$
Moreover, for any $\varphi \in
\mathscr{D}(\mathcal{L}_{\mathsf{hs}})$, since $\mathcal{M}$ is an
equilibrium state of $\Q_{\mathsf{hs}}$,
$$\int_{\R^3 \times \R^3 \times \S}
       \dfrac{\qn}{2\pi\sigma}\mathcal{M}(\v) \mathcal{M}_1(\w) \left[ \varphi(\v^\star) - \varphi(\v) \right] \, \d\w \, \d\v \, \d\n
     = \langle \mathcal{L}_{\mathsf{hs}}(\mathcal{M}), \varphi \rangle_{L^2(\R,\d\v)}
     =0.$$
It is easy to deduce then that
  $$\int_{\R_\v^3} \mathcal{L}_{\mathsf{hs}} (f)  \, f  \,
\mathcal{M}^{-1} \, \d\v = \int_{\R^3 \times \R^3 \times \S}
  \dfrac{\qn}{2\pi\sigma}\mathcal{M}(\v) \mathcal{M}_1(\w) g(\v^\star) \left[ g(\v) - g(\v^\star) \right] \,\d\w \, \d\v \,
  \d\n.$$
Finally taking the mean of these two quantities leads to the desired
result. The same reasoning also holds for
$\mathcal{D}_{\mathsf{max}}$.\end{proof}

\section{Quantitative estimates of the spectral gap}\label{sec:entropy} \setcounter{equation}{0}

In this section, we strengthen the above result in providing a
quantitative lower bound for both $\mathcal{D}_{\mathsf{max}}(f)$
and $\mathcal{D}_{\mathsf{hs}}(f)$. The estimate for
$\mathcal{D}_{\mathsf{max}}(f)$ is related to the spectral
properties of the collision operator $\mathcal{L}_\mathsf{max}$
while that of $\mathcal{D}_\mathsf{hs}(f)$ relies on a suitable
comparison with $\mathcal{D}_\mathsf{max}(f).$ From now on, $\langle
\cdot,\cdot \rangle$ denotes the scalar product of
$L^2(\mathcal{M}^{-1})$ and $\mathfrak{S}(\mathcal{L})$,
$\mathfrak{S}_p(\mathcal{L})$ shall denote respectively the spectrum
and the point spectrum of a given (non necessarily bounded) operator
in $L^2(\mathcal{M}^{-1})$.

\subsection{Spectral study for Maxwell molecules}\label{Sec:spectre}

We already saw that the operator $\mathcal{L}_{\mathsf{max}}: L^2(\mathcal{M}^{-1})
\to L^2(\mathcal{M}^{-1})$ is bounded, and it is easily seen that
$\mathcal{L}_\mathsf{max}$ splits as
$$\mathcal{L}_\mathsf{max}f =\mathcal{L}^+_\mathsf{max}f -
 f(\v)$$ where $\mathcal{L}^+_\mathsf{max}$ is
compact and self-adjoint (the proof can be done similarly as in \cite{ArLo})
and we used that the collision frequency
$\sigma_{\mathsf{max}}$ associated to the Maxwell molecules is
constant.  Moreover, since
$$\langle \mathcal{L}_\mathsf{max}f,f\rangle \leq 0,\qquad \forall f
\in L^2(\mathcal{M}^{-1}),$$ the operator is easily seen to generate
a $C^0$-semigroup of contractions, and it is known that
$\mathfrak{S}(\mathcal{L}_\mathsf{max}) \subset (-\infty,0].$
Finally, since
$\mbox{Id}+\mathcal{L}_\mathsf{max}=\mathcal{L}_\mathsf{max}^+$ is a
positive, self-adjoint compact operator, one sees that the spectrum
of $\mathcal{L}_\mathsf{max}$ is made of a discrete set of
eigenvalues with finite algebraic multiplicities plus possibly
$\{-1\}$ in the essential spectrum, with
\begin{equation}\label{spec-10}
\mathfrak{S}_p(\mathcal{L}_\mathsf{max})
\subset (-1,0]\end{equation} and where the only possible accumulation
point is $\{-1\}$. Clearly, since
$\mathcal{L}_\mathsf{max}(\mathcal{M})=0,$ $\lambda_{0,0}:=0$ is an
eigenvalue of $\mathcal{L}_\mathsf{max}$ with eigenspace given by
$\mathrm{Span}(\mathcal{M})$. There are other eigenvalues of
$\mathcal{L}_\mathsf{max}$ of peculiar interest. Namely, for any $f
\in L^2(\mathcal{M}^{-1})$, the weak formulation \eqref{weak} yields
\begin{equation*}\begin{split}
\int_{\R^3_\v}  (\v-u_1)\mathcal{L}_{\mathsf{max}} (f ) \,
\d\v&=\dfrac{1}{2\pi}\int_{\R_v^3 \times \R_w^3 \times \mathbb{S}^2}
|\widetilde{q} \cdot \n| f(   \v)
\mathcal{M}_1(\w)\left[v^\star-\v\right]\d v \d w \d n\\
&=-\dfrac{\alpha(1-\beta)}{\pi}\int_{\R_v^3}f(v)\d v
\int_{\R^3_w}\mathcal{M}_1(\w)\d\w\int_{\mathbb{S}^2}|\widetilde{q}\cdot
\n| (q \cdot \n)\,\n \d n.
\end{split}\end{equation*}
Using the fact that $\ds \int_{\mathbb{S}^2}|\widetilde{q}\cdot \n|
(q \cdot \n)\,\n \d n=\pi \, q,$ one has
\begin{equation}\label{integralv}\begin{split}\int_{\R_\v^3} (\v-u_1) \mathcal{L}_{\mathsf{max}} (f) \,
\d\v&=-\alpha
(1-\beta)\int_{\R_v^3}f (v)\d\v\int_{\R_w^3}(\v-\w)\mathcal{M}_1(\w)\d\w\\
&=-\alpha (1-\beta)\int_{\R_v^3} (\v-u_1) f (v)\d\v.
\end{split}\end{equation} The operator
$\mathcal{L}_\mathsf{max}$ being self-adjoint in
$L^2(\mathcal{M}^{-1} )$, one obtains the identity
$$\left\langle f,\mathcal{L}_\mathsf{max}((v_i-u_{1,i})\mathcal{M})\right\rangle =-\a(1-\b)
\left\langle f, (v_i-u_{1,i}) \mathcal{M} \right \rangle,  \qquad
i=1,2,3,\quad \forall f \in L^2(\mathcal{M}^{-1} ).$$ If we denote
$$\lambda_{0,1}= \alpha (1-\beta)=\kappa \in (0,1),$$
this means that $-\lambda_{0,1}$ is an {eigenvalue of}
$\mathcal{L}_\mathsf{max}$ associated to the
\textit{\textbf{momentum eigenvectors}} $(v_i-u_{1,i})
\mathcal{M}(v)$ for any $i=1,2,3.$ In the same way,  technical
calculations show that, for any $f \in L^2(\mathcal{M}^{-1})$,
\begin{equation*}\begin{split}
\langle \mathcal{L}_\mathsf{max}(f)\,,&\,|v-u_1|^2
\mathcal{M}\rangle=\int_{\R^3}\mathcal{L}_\mathsf{max}(f)|v-u_1|^2 \d\v\\
&=-2\a(1-\b)(1-\a(1-\b))\langle f, |v-u_1|^2\mathcal{M}\rangle  +
\dfrac{6\Theta_1}{m_1}\a^2(1-\b)^2 \langle
f,\mathcal{M}\rangle,\end{split}\end{equation*} and, since
$\mathcal{L}_\mathsf{max}$ is self-adjoint, setting
$$\lambda_{1,0}=2\a(1-\b)(1-\a(1-\b))=2\kappa(1-\kappa),$$
one has $\langle \mathcal{L}_\mathsf{max}(|v-u_1|^2\mathcal{M}),
f\rangle = -\lambda_{1,0} \langle |v-u_1|^2\mathcal{M}, f \rangle,$
for any $f \,\bot \,\mathrm{span}(\mathcal{M}).$  Equivalently,
$$\mathcal{L}_\mathsf{max}\left(\mathcal{E}\right)=-\lambda_{1,0} \mathcal{E},$$
where
$\mathcal{E}(v)=\left(|v-u_1|^2-\frac{3\Theta^{\#}}{m}\right)\mathcal{M},$
is the \textbf{\textit{ energy eigenfunction}}, associated to
$-\lambda_{1,0}.$ To summarize, we obtained three particular
eigenvalues $\lambda_{0,0},\lambda_{0,1}$ and $\lambda_{1,0}$ of
$\mathcal{L}_\mathsf{max}$ associated respectively to the
equilibrium, momentum and energy eigenfunctions.

To provide a full picture of the spectrum of
$\mathcal{L}_{\mathsf{max}}$, we adopt the strategy of Bobylev
\cite{Bo88} based on the application of the Fourier transform to the
elastic Boltzmann equation. Namely, we are looking for $\lambda >0$
 such that the equation
\begin{equation}\label{spect}
\mathcal{L}_\mathrm{max} f =-\lambda f,\qquad f \in
L^2(\mathcal{M}^{-1}), \:f \neq 0,\end{equation} admits a solution.
Applying the Fourier transform $\mathscr{F}$ to the both sides of
the above equation, we are lead to:
$$\mathscr{F}\left[\mathcal{L}_\mathrm{max} f\right] (\xi)=-\lambda \widehat{f}(\xi),\qquad \xi \in \R^3,$$
where $\widehat{f}=\mathscr{F}(f)$. One deduces immediately from the
calculations performed in \cite{SpTo}, that
  \[ \mathscr{F}\left[\mathcal{L}_\mathrm{max} f\right] (\xi) = \frac{1}{2 \pi} \,
       \int _{\S} |\widetilde{\xi} \cdot \n|
         \left[ \widehat{f}(\xi^+) \widehat{\mathcal{M}_1} (\xi^-) - \widehat{f}(\xi) \widehat{\mathcal{M}_1} (0) \right] \, \d\n \]
with $\widetilde{\xi}=\xi/|\xi|$ and
$$\xi^+ = \xi - 2\alpha(1-\beta) (\xi \cdot \n)\n, \qquad  \qquad
  \xi^- = 2\alpha(1-\beta) (\xi \cdot \n)\n$$
while $\widehat{\mathcal{M}_1}$ is the Fourier transform of the
background Maxwellian distribution, given by
  \[ \widehat{\mathcal{M}_1}(\xi) = \exp \left\{ -i u_1 \cdot \xi - \frac{ \Theta_1 |\xi|^2}{2 m_1} \right\}. \]
The fundamental property  is that even if the equilibrium
distribution $\mathcal{M}$ does not make the integrand of the
collision operator vanish pointwise (as it is the case in the
elastic case),  surprisingly it still satisfies a pointwise relation
in Fourier variables as was noticed in~\cite{SpTo}. A simple
computation yields
  \[ \widehat{\mathcal{M}}(\xi) = \exp \left\{ -i u_1 \cdot \xi - \frac{ \Theta^{\#} |\xi|^2}{2 m} \right\} \]
and thus one checks easily that,  $\widehat{\mathcal{M}}(\xi^+) \,
\widehat{\mathcal{M}_1} (\xi^-) = \widehat{\mathcal{M}} (\xi),$ for
any $n \in \mathbb{S}^2,$ and any $\xi \in \R^3.$ Thus, following
the method of Bobylev~\cite{Bo88}, we rescale $\widehat{f}(\xi) =
\widehat{\mathcal{M}}(\xi) \, \varphi(\xi)$ and define the
corresponding re-scaled operator $\mathscr{L} $
  \[\mathscr{L} \varphi(\xi) = \frac{1}{2 \pi} \,
       \int _{\S} |\widetilde{\xi} \cdot \n|
         \left[ \varphi(\xi^+) - \varphi(\xi) \right] \, \d\n. \]
Then, Eq. \eqref{spect} amounts to find  $\lambda >0$ such that the
equation $\mathscr{L}  \varphi(\xi)=-\lambda \varphi(\xi)$  admits a
non zero solution $\varphi$ with
\begin{equation}\label{admi}
f=\mathscr{F}^{-1}(\varphi \widehat{\mathcal{M}}^{-1}) \in
L^2(\mathcal{M}^{-1})\end{equation} where $\mathscr{F}^{-1}$ stands
for the inverse Fourier transform. Using, as in the elastic case
\cite[p. 136]{Bo88}, the symmetry properties of the operator
$\mathscr{L}$ together with condition \eqref{admi}, one obtains that
functions of the form
  \[ \varphi_{n,\l, m} (\xi) = |\xi|^{2n+\l} \, \mathbf{Y}_{\l, m} (\widetilde{ \xi } ), \qquad n \geq 0, \]
 $\mathbf{Y}_{\l, m}$  being a spherical harmonic ($\l \in
\mathbb{N}$, $m=-\l,\ldots,\l$),  are eigenvectors of $\mathscr{L}$,  
associated to the
eigenvalues $-\lambda_{n,\l}=-\lambda_{n,\l}(\kappa)$ where,
according to \cite{BCG},
\begin{equation}\label{eigenvaluep}\lambda_{n,\ell}=1 -
\frac{1}{2\kappa(1-\kappa)}
   \int_{1-2\kappa} ^1 s^{2n+\l+1}\,
     \mathbf{P}_\l \left(\frac{1-2\kappa+s^2}{(2-2\kappa)s}\right) \, \d
     s\end{equation}
where $\mathbf{P}_\l$ is the $\l$-th Legendre polynomial, $n \in
\mathbb{N},$ $\l \in \mathbb{N}.$ Note that, according to
\eqref{spec-10}, $\lambda_{n,\l} \in (0,1].$
 Technical calculations prove that the eigenvalues we already found
out, namely,
$$\lambda_{0,0}=0, \quad \lambda_{0,1}=\kappa, \quad \text{ and }
\quad \lambda_{1,0}=2\kappa(1-\kappa),$$ do actually correspond to
the couples $(n,\l)=(0,0)\,;(0,1)$ and $ (1,0)$
 respectively.
  Moreover, from
the well-known Legendre polynomials
 property:
$$(\l+1)\mathbf{P}_{\l+1}(x)=(2\l+1) x
\mathbf{P}_{\l}(x)-\l\,\mathbf{P}_{\l-1}(x),\qquad x \in \mathbb{R},
\,\l \geq 1,$$ one obtains the recurrence formula
\begin{equation}\label{recurrence}\lambda_{n,\l+1}=\dfrac{2\l+1}{\l+1}\dfrac{\nu}{1+\nu}
\lambda_{n,\l} + \dfrac{2\l+1}{
(\l+1)(1+\nu)}\lambda_{n+1,\l}-\dfrac{\l}{\l+1}\lambda_{n+1,\l-1},\qquad
n, \l \geq 0\end{equation} where $\nu=1-2\kappa \in (-1,1)$. Such a
recurrence formula together with the relation
\begin{equation*}
\lambda_{n,0}=1-\dfrac{1}{n+1}\dfrac{1-\nu^{2n+2}}{1-\nu^2},\end{equation*}
 allow to prove by induction over $\l
\in \mathbb{N}$ that
$$\lambda_{n+1,\l} \geq \lambda_{n,\l}\quad \text{ and }\quad \lambda_{n,\l+1} \geq \lambda_{n,\l}
\quad \text{ for any } \quad n \in \mathbb{N}.$$
Consequently, one sees that $\min\{\lambda_{n,\l}\,;\, n, \l \geq
0\} \setminus \{0\} = \min\{\lambda_{1,0};\lambda_{0,1}\}$ which means
that  spectral gap of $\mathcal{L}_\mathsf{max}$ is given by
$$\mu_{\mathsf{max}}=\min\{\lambda_{1,0};\lambda_{0,1}\}=\min\{\kappa;2\kappa(1-\kappa)\}.$$

 \begin{nb} Note that,
 $$\lambda_{0,1} \leq \lambda_{1,0} \Longleftrightarrow \kappa \leq 1/2 \Longleftrightarrow \a(1+\e)  \leq 1.$$
In particular, if $m_1 \leq m$ then  $\lambda_{0,1} \leq
\lambda_{1,0}$. Assuming for a while that we are dealing with
species of gases with same masses $m=m_1$, then  in the true
inelastic case ({\em i.e.}, $\e <1$) one also has $\lambda_{1,0} >
\lambda_{0,1}$. This situation is very particular to inelastic
scattering and means that the cooling process of the temperature
happens more rapidly than the forcing of the momentum by the
background, whereas when $\kappa=1/2$ these two processes happen at
exactly the same speed $(\lambda_{0,1}=\lambda_{1,0})$. Note also
that, whenever $\lambda_{0,1} > \lambda_{1,0}$ (due to the ratio
of mass different from $1$), the smallest
eigenvalue corresponds to the momentum relaxation
and not the energy relaxation anymore.
This contrasts very much with the linearized case (see~\cite{BCG}).
Note also that the first eigenvalues are ordered as illustrated in
Fig.~1.
\end{nb}

The above result can be summarized in the following where the last
statement follows from the fact that $\mathcal{L}_\mathsf{max}+ \mbox{Id}$
is a self-adjoint compact operator of $L^2(\mathcal{M}^{-1})$.
\begin{theo}\label{Spec}The operator $-\mathcal{L}_\mathsf{max}$ is a bounded self-adjoint
positive operator of $L^2(\mathcal{M}^{-1})$ whose spectrum is composed of
an essential part $\{+1\}$ plus the following discrete part:
$$\mathfrak{S}_p(-\mathcal{L}_\mathsf{max})=\left\{\lambda_{n,\l}\,;\,n,\,\l
\in \mathbb{N}\right\} \subset [0,1)$$ where $\lambda_{n,\l}$ is
given by \eqref{eigenvaluep}. Moreover, $\lambda_{0,0}=0$ is a
simple eigenvalue of $\mathcal{L}_\mathsf{max}$ associated to the
eigenvector $\mathcal{M}$ and $-\mathcal{L}_\mathsf{max}$ admits a
positive spectral gap
$$\mu_{\mathsf{max}}=\min\{\lambda_{1,0};\lambda_{0,1}\}=\min\{\kappa;2\kappa(1-\kappa)\}.$$
Finally, there exists a Hilbert basis of $L^2(\mathcal{M}^{-1})$
made of eigenvectors of $-\mathcal{L}_\mathsf{max}.$
\end{theo}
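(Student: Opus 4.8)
The plan is to assemble the facts already established in this subsection and to supply the one genuinely missing ingredient, namely the completeness of the computed eigenfunctions. First I would record the structural properties. Since the collision frequency $\sigma_{\mathsf{max}}\equiv 1$, the operator splits as $\mathcal{L}_\mathsf{max}=\mathcal{L}^+_\mathsf{max}-\mbox{Id}$ with $\mathcal{L}^+_\mathsf{max}$ compact and self-adjoint, so that $-\mathcal{L}_\mathsf{max}=\mbox{Id}-\mathcal{L}^+_\mathsf{max}$ is a bounded self-adjoint operator of $L^2(\mathcal{M}^{-1})$. Its positivity is exactly the inequality $\langle \mathcal{L}_\mathsf{max}f,f\rangle=-\mathcal{D}_\mathsf{max}(f)\leq 0$, which is the content of Proposition~\ref{DH}.

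Second, I would invoke the spectral theorem for the compact self-adjoint operator $\mathcal{L}^+_\mathsf{max}=\mbox{Id}+\mathcal{L}_\mathsf{max}$: there exists a Hilbert basis of $L^2(\mathcal{M}^{-1})$ made of its eigenvectors, its nonzero eigenvalues have finite multiplicity and accumulate only at $0$, and, the range being infinite-dimensional, $0$ belongs to its essential spectrum. Since $-\mathcal{L}_\mathsf{max}$ differs from $-\mathcal{L}^+_\mathsf{max}$ only by the shift $\mbox{Id}$, the very same basis diagonalizes $-\mathcal{L}_\mathsf{max}$ — this yields the final Hilbert-basis assertion at once — and the spectrum of $-\mathcal{L}_\mathsf{max}$ is the image of $\mathfrak{S}(\mathcal{L}^+_\mathsf{max})$ under $\mu\mapsto 1-\mu$, that is an essential part $\{+1\}$ together with a discrete set of eigenvalues of finite multiplicity accumulating only at $1$ and lying in $[0,1)$ by \eqref{spec-10}.

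Third, I would identify this discrete set, and this is where the main obstacle lies. The Bobylev/Fourier reduction carried out above shows that the rescaled operator $\mathscr{L}$ admits the eigenfunctions $\varphi_{n,\l,m}(\xi)=|\xi|^{2n+\l}\mathbf{Y}_{\l,m}(\widetilde{\xi})$ with eigenvalues $-\lambda_{n,\l}$ given by \eqref{eigenvaluep}; pulling these back through the rescaling $\widehat{f}=\widehat{\mathcal{M}}\,\varphi$ and the inverse Fourier transform produces eigenfunctions of $\mathcal{L}_\mathsf{max}$ in $L^2(\mathcal{M}^{-1})$. The decisive point is to check that these eigenfunctions are \emph{complete}, i.e. total in $L^2(\mathcal{M}^{-1})$: this guarantees that they constitute the Hilbert basis of the previous step and, consequently, that the point spectrum is \emph{exactly} $\{\lambda_{n,\l}\,;\,n,\l\in\mathbb{N}\}$, with no spurious eigenvalue hidden in the accumulation at $1$. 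Completeness follows from the classical fact that the corresponding velocity functions (spherical harmonics times radial polynomials times $\mathcal{M}$, i.e. Burnett-type functions) span $L^2(\mathcal{M}^{-1})$, exactly as in the elastic computation of \cite{Bo88}; the orthogonality of the family is then a standard consequence of self-adjointness and of the structure of these functions.

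Finally, the two remaining assertions. That $\lambda_{0,0}=0$ is an eigenvalue with eigenvector $\mathcal{M}$ is immediate since $\mathcal{L}_\mathsf{max}(\mathcal{M})=0$; its simplicity is the statement that $\mathcal{D}_\mathsf{max}(f)=0$ forces $f$ to be co-linear to $\mathcal{M}$, which is the equality case of the $H$--Theorem~\eqref{h}. For the spectral gap, I would use the monotonicity $\lambda_{n+1,\l}\geq\lambda_{n,\l}$ and $\lambda_{n,\l+1}\geq\lambda_{n,\l}$ established by induction from the recurrence \eqref{recurrence} and the closed form of $\lambda_{n,0}$; this gives $\min\{\lambda_{n,\l}\,;\,n,\l\geq 0\}\setminus\{0\}=\min\{\lambda_{1,0};\lambda_{0,1}\}$, whence the positive spectral gap $\mu_{\mathsf{max}}=\min\{\kappa;2\kappa(1-\kappa)\}$.
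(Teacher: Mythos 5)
Your proposal is correct and follows essentially the same route as the paper: Theorem~\ref{Spec} is there explicitly a summary of the preceding subsection (the splitting $\mathcal{L}_\mathsf{max}=\mathcal{L}^+_\mathsf{max}-\mathrm{Id}$ with $\mathcal{L}^+_\mathsf{max}$ compact self-adjoint, the Bobylev--Fourier computation of the eigenfunctions and of the eigenvalues \eqref{eigenvaluep}, and the monotonicity deduced from the recurrence \eqref{recurrence} yielding the gap). Your only genuine addition is to make explicit the completeness of the Burnett-type eigenfunctions (polynomials times $\mathcal{M}$, dense in $L^2(\mathcal{M}^{-1})$), which the paper leaves implicit but which is indeed what upgrades the inclusion $\left\{\lambda_{n,\l}\right\}\subset\mathfrak{S}_p(-\mathcal{L}_\mathsf{max})$ to the stated equality.
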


\begin{figure}[h] 
  \begin{center}
  \includegraphics[scale=.9]{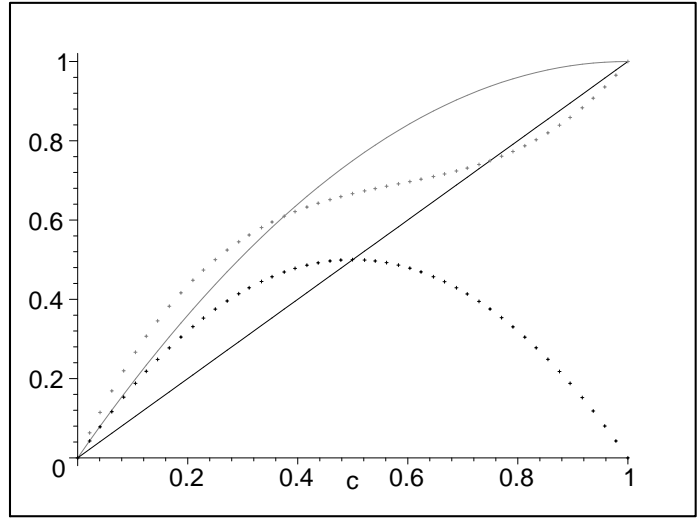}
  \caption{Evolution of $\lambda_{0,1}$ (black solid line), $\lambda_{1,0}$ (black dotted line), $\lambda_{1,1}$ (grey dotted line) and
  $\lambda_{0,2}$ (grey solid line) as functions
  of $c \in [0,1]$}\label{carl}
  \end{center}
  \end{figure}

\subsection{Entropy estimate for Maxwell molecules}

The result of the above section allows to provide a quantitative
version of the $H$-Theorem. Precisely, for any $f \in
L^2(\mathcal{M}^{-1})$ orthogonal to $\mathcal{M}$, using the
decomposition of both $\mathcal{L}_\mathsf{max}f$ and $f$ on the
Hilbert basis of $L^2(\mathcal{M}^{-1})$ made of eigenvectors of
$-\mathcal{L}_\mathsf{max}$ (see Theorem \ref{Spec}), it is easily
proved that:
  \begin{equation}\label{nullspace}
     \mathcal{D}_{\mathsf{max}}(f):= - \int_{\R_\v^3} \mathcal{L}_{\mathsf{max}} (f) \, f \, \mathcal{M}^{-1} \, \d\v
     \geq  \mu_\mathsf{max} \, \|f\|_{L^2(\mathcal{M}^{-1})} ^2,
     \qquad \forall f \bot \, \mathrm{Span}(\mathcal{M}).
     \end{equation}
It is well-known that such a {\it coercivity estimate} allows to
obtain an exponential relaxation rate to equilibrium for the
solution to the space homogeneous Boltzmann equation. Namely, given
$f_0(\v) \in L^2(\R^3,\mathcal{M}^{-1}(\v)\d\v)$ with unit mass
$$\int_{\R^3} f_0(v)\d v=1,$$
let $f_t$ be the unique solution of \eqref{evol} with initial
condition $f_{t=0}=f_0.$ According to the conservation of mass, it
is clear that $(f_t -\mathcal{M})$ is  orthogonal to $\mathcal{M}$
(for the $L^2(\R_v^3,\mathcal{M}^{-1}(\v)\d\v)$ scalar product) and,
within the entropy language:
  \[ \frac{\d}{\d t} H_\Phi (f_t | \mathcal{M})=-2 \mathcal{D}_\mathsf{max}(f_t) \le - 2 \mu_\mathsf{max} \, H_\Phi (f_t | \mathcal{M}) \]
for $\Phi(x)=(x-1)^2$ or equivalently,
  \[ \left( \int_{\R^3} (f_t-\mathcal{M})^2 \, \mathcal{M}^{-1} \, \d\v \right)^{1/2} \le
      \left( \int_{\R^3} (f_0-\mathcal{M})^2 \, \mathcal{M}^{-1} \, \d\v \right)^{1/2} \, \exp\left(-\mu_\mathsf{max} t\right), \qquad \forall t \geq 0. \]

We obtain in this way an {\it explicit} exponential relaxation rate
towards equilibrium for the solution to the space homogeneous linear
Boltzmann equation  which is valid for granular gases of Maxwell
molecules and generalizes a well-known result for classical gases
\cite{Ce88}. More interesting is the fact that the knowledge of the
spectral gap of $\mathcal{L}_\mathsf{max}$ allows to recover an
explicit estimate of the spectral gap of the linear Boltzmann
operator for hard-spheres $\mathcal{L}_\mathsf{hs}$ through a
suitable comparison of the entropy production functionals
$\mathcal{D}_{{\mathsf{hs}}}$ and $\mathcal{D}_{{\mathsf{max}}}$.
This is the subject of the following section.

\subsection{Entropy estimate for hard-spheres}

The goal of this subsection is to show that the entropy production
functional $\mathcal{D}_{{\mathsf{hs}}}$ for hard-spheres relates to
the one for Maxwell molecules $\mathcal{D}_\mathsf{max}$. More
precisely we shall show that
  \begin{propo} \label{linkHM}
 The entropy production functionals $\mathcal{D}_\mathsf{hs}$ and
 $\mathcal{D}_\mathsf{max}$ are related by:
    \[ \mathcal{D}_{{\mathsf{hs}}}(f) \geq C^\star \, \mathcal{D}_{\mathsf{max}}(f), \qquad \forall f \in \mathscr{D}(\mathcal{L}_\mathsf{hs}) \]
  for some explicit constant $C^\star$ depending only on $\alpha$ and $\beta$.
  \end{propo}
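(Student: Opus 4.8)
The starting point is the common representation of the two functionals furnished by Proposition~\ref{DH}. Setting $g=f/\mathcal{M}$ and recalling that the post-collisional velocity $\v^\star=\v-2\kappa(\q\cdot\n)\n$ depends, for fixed $\v$ and $\n$, on $\w$ only through the scalar $\q\cdot\n$, both entropy production functionals take the form
\[
\mathcal{D}_\sharp(f)=\frac{1}{4\pi}\int_{\R^3\times\R^3\times\S} b_\sharp(\q,\n)\,[g(\v^\star)-g(\v)]^2\,\mathcal{M}_1(\w)\,\mathcal{M}(\v)\,\d\w\,\d\v\,\d\n ,\qquad \sharp\in\{\mathsf{hs},\mathsf{max}\},
\]
with $b_\mathsf{hs}(\q,\n)=|\q\cdot\n|=|\q|\,|\widetilde{\q}\cdot\n|$ and $b_\mathsf{max}(\q,\n)=|\widetilde{\q}\cdot\n|$. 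Thus the two integrands differ only by the extra factor $|\q|$, and the whole statement reduces to a comparison of the two collision kernels once the velocity difference $g(\v^\star)-g(\v)$ has been frozen. The difficulty, of course, is that $|\q|$ degenerates as $\v\to\w$, so a naive bound fails on the set of small relative velocities.

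The plan is to carry out the comparison one impact direction at a time. I would fix $\v\in\R^3$ and $\n\in\S$, change variables $\w\mapsto\q=\v-\w$, and split $\q=r\,\n+q_\perp$ with $r=\q\cdot\n\in\R$ and $q_\perp\in\n^\perp\simeq\R^2$. The crucial observation is that $\v^\star=\v-2\kappa r\,\n$ depends only on $r$, so $[g(\v^\star)-g(\v)]^2$ factors out of the $q_\perp$-integration. Integrating out $q_\perp$ then produces the reduced one–dimensional kernels
\[
K_\mathsf{hs}(\v,r,\n)=|r|\int_{\n^\perp}\mathcal{M}_1(\v-r\n-q_\perp)\,\d q_\perp,\qquad K_\mathsf{max}(\v,r,\n)=|r|\int_{\n^\perp}\frac{\mathcal{M}_1(\v-r\n-q_\perp)}{\sqrt{r^2+|q_\perp|^2}}\,\d q_\perp ,
\]
and $\mathcal{D}_\mathsf{hs},\mathcal{D}_\mathsf{max}$ become $\tfrac{1}{4\pi}\int_{\R^3}\int_\S\int_\R[g(\v-2\kappa r\n)-g(\v)]^2\,K_\sharp(\v,r,\n)\,\mathcal{M}(\v)\,\d r\,\d\n\,\d\v$. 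Since $[g(\v-2\kappa r\n)-g(\v)]^2\,\mathcal{M}(\v)\ge0$, it suffices to prove the pointwise bound $K_\mathsf{hs}(\v,r,\n)\ge C^\star\,K_\mathsf{max}(\v,r,\n)$ uniformly in $(\v,r,\n)$.

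This pointwise bound is the heart of the argument and is exactly the convolution integral to be studied carefully. Cancelling the common factor $|r|$ and using $\sqrt{r^2+|q_\perp|^2}\ge|q_\perp|$, it reduces to showing that for the two–dimensional Gaussian $q_\perp\mapsto G(q_\perp):=\mathcal{M}_1(\v-r\n-q_\perp)$ on the plane $\n^\perp$ one has
\[
\int_{\n^\perp}\frac{G(q_\perp)}{|q_\perp|}\,\d q_\perp\le C\int_{\n^\perp}G(q_\perp)\,\d q_\perp .
\]
I would prove this by splitting at $|q_\perp|=\rho$: on $\{|q_\perp|\ge\rho\}$ one uses $|q_\perp|^{-1}\le\rho^{-1}$, while on $\{|q_\perp|<\rho\}$ one uses $G\le\|G\|_\infty$ together with the fact that $|q_\perp|^{-1}$ is integrable in dimension two, $\int_{|q_\perp|<\rho}|q_\perp|^{-1}\,\d q_\perp=2\pi\rho$. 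This gives $\int G/|q_\perp|\le\rho^{-1}\int G+2\pi\rho\,\|G\|_\infty$, and optimizing in $\rho$ yields $C$ in terms of the ratio $\|G\|_\infty/\int G$. The essential point is that for a Gaussian of fixed variance $\Theta_1/m_1$ this ratio is a fixed number, independent of where $G$ is centred and of its height; hence the constant is uniform in $(\v,r,\n)$ and explicit.

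The main obstacle is precisely this uniformity: a priori the singular weight $1/|q_\perp|$ is integrated against mass that, as $\v$ ranges over $\R^3$, could in principle concentrate arbitrarily close to $q_\perp=0$, and one must exclude any blow-up. What makes the estimate go through is the combination of the two–dimensional integrability of the singularity with the scale-invariant $L^\infty/L^1$ ratio of the background Maxwellian $\mathcal{M}_1$, and the fact that $g(\v^\star)-g(\v)$ is measurable along $r$ only (so no regularity of $f$ is needed). Once $K_\mathsf{hs}\ge C^\star K_\mathsf{max}$ is established, the conclusion is immediate: integrating the inequality against the nonnegative factor $[g(\v-2\kappa r\n)-g(\v)]^2\,\mathcal{M}(\v)$ in $(\v,r,\n)$ gives $\mathcal{D}_\mathsf{hs}(f)\ge C^\star\,\mathcal{D}_\mathsf{max}(f)$ with an explicit constant $C^\star$, for every $f\in\mathscr{D}(\mathcal{L}_\mathsf{hs})$.
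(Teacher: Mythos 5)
Your proof is correct, and while it shares the paper's overall architecture (start from the common representation of Proposition~\ref{DH}, parametrize $\q$ by its component $r=\q\cdot\n$ along the impact direction and its orthogonal part, and reduce everything to a two-dimensional convolution estimate against the Gaussian background), the key lemma you use is genuinely different and simpler. The paper first splits according to $|r^\star-r|\gtrless\varrho_0$, handles the large-$|r^\star-r|$ region via $|\q|\ge|r^\star-r|/2\kappa$, and on the complementary region proves the weighted comparison $\int |\bar v-\bar w|\,\mathcal{M}_1(\bar w)(|\bar v-\bar w|^2+\xi^2)^{-1/2}\d\bar w\ge C\int\mathcal{M}_1(\bar w)(|\bar v-\bar w|^2+\xi^2)^{-1/2}\d\bar w$, which only holds uniformly for $\xi$ in a bounded range and requires the $\mathrm{erf}$-function analysis of Lemma~A.1 to quantify the admissible $\varrho_1$. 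You instead integrate out $q_\perp$ first, observe that $[g(\v^\star)-g(\v)]^2$ depends on $\w$ only through $r$, and reduce to the single unconditional inequality $\int G/|q_\perp|\le C\int G$ for a two-dimensional Gaussian slice $G$ of fixed variance $\Theta_1/m_1$; the two-dimensional integrability of $1/|q_\perp|$ plus the scale-invariant ratio $\|G\|_\infty/\int G=m_1/(2\pi\Theta_1)$ make this uniform in $(\v,r,\n)$ with no splitting in $r$ at all. This buys you a one-step argument with no appendix, and in fact a slightly better constant: optimizing $\rho^{-1}+\rho\, m_1/\Theta_1$ gives $C^\star=\tfrac12\sqrt{\Theta_1/m_1}$, versus the paper's $\eta/\sqrt5\simeq 0.30\sqrt{\Theta_1/m_1}$. (Like the paper's constant, yours depends on $\Theta_1/m_1$ rather than on $\alpha,\beta$ alone, consistent with Remark~\ref{nb:lower}.) The only point worth writing out fully in a final version is the elementary computation showing that the restriction of the three-dimensional Maxwellian $\mathcal{M}_1$ to the affine plane $\v-r\n+\n^\perp$ is indeed a constant multiple of a centred-elsewhere two-dimensional Gaussian of the same variance, so that the $L^\infty/L^1$ ratio is truly independent of $(\v,r,\n)$.
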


\begin{nb} The idea of searching for such an inequality was already
present in~\cite{BaMo}, but here the method of proof is different
and simpler: one does not need any triangular inequality between
collisions, and the proof reduces to a careful study of a
convolution integral.\end{nb}

\begin{nb} Note that in the hard-spheres case, the operator $\mathcal{L}_\mathsf{hs}$ is unbounded.
For a careful study of its properties (compactness of the non-local part,
definition of the associated $C^0$-semigroup of contraction in the
Hilbert space $L^2(\mathcal{M}^{-1})$) we refer to \cite{ArLo}.
\end{nb}

\begin{proof} Let $f \in
\mathscr{D}(\mathcal{L}_{\mathsf{hs}})$. We set $u_1=0$ in this proof
without restriction since this only amounts to a space translation.

We introduce the following parametrization, for fixed $\n \in \S$, $\v=r\n+\bar{v}$,
$\v^\star=r^\star\n+\bar{v}$, $\w=r_\w \n + \bar{w}$,
$\w^\star=r_{\w^\star} \n + \bar{w}$, where $r, r^\star,r_\w$ and
$r_{\w^\star}$ are real numbers and $\bar{v},\bar{w} $ are
orthogonal to $\n$. Simple computations show that
$$r_\w = \frac{r^\star}{2\alpha(1-\beta)} +
\left(1-\frac{1}{2\alpha(1-\beta)}\right) r,
$$
while
$$r_{\w^\star} =
\left(\frac{1}{2\alpha(1-\beta)}-\frac{1-\alpha}{\alpha}\right)r^\star
+ \left(\frac{1}{\alpha}-\frac{1}{2\alpha(1-\beta)} \right) r.
$$
Therefore, $r_\w$ and $r_{\w^\star}$ only depend on $r$ and
$r^\star$. Then if we denote $\theta$ the angle between
$\widetilde{\q}$ and $\n$, we get from Prop. \ref{DH}, where we set
$g=\frac{f}{\mathcal{M}}$,
  \begin{multline*}
  \mathcal{D}_{{\mathsf{hs}}}(f) = \frac{1}{2\pi  } \, \int_{\S} \int_{r,r^\star \in \R} \int_{\bar{v},\bar{w} \in \n^\bot}
                  |\q| \, \cos \theta \,
                  \Big[ g(r^\star\n +\bar{v}) - g(r \n +\bar{v}) \Big]^2 \\ \mathcal{M}_1(r_\w \n +\bar{w})
     \, \mathcal{M} (r\n+\bar{v}) \d \bar{v} \, \d \bar{w} \, \d r \, \d r^\star \, \d\n
  \end{multline*}
with
$$|\q| = \Big(|\bar{v}-\bar{w}|^2+
(2\kappa)^{-2}|r^\star-r|^2\Big)^{1/2}
$$
and
$$\cos \theta = \frac{(2\kappa)^{-1}|r^\star-r|}
      {\left(|\bar{v}-\bar{w}|^2+(2\kappa)^{-2}|r^\star-r|^2\right)^{1/2}},
$$
      where we recall that $\kappa=\a(1-\b)$. We split the integral into two parts according to
      $|r^\star-r|\ge
\varrho_0
>0$ or $|r^\star-r| \le \varrho_0$ where $\varrho_0$ is a positive parameter to be determine latter. Using the fact that $|\q| \ge
|r^\star-r|/2\kappa$, one has the following estimate for the first
part of the integral
\begin{equation*}\begin{split}
\frac{1}{2\pi } \,& \int_{\S} \d\n \int_{\{|r^\star-r|\ge
\varrho_0\}}\d r \, \d
r^\star \int_{\bar{v},\bar{w} \in \n^\bot}
                 |\q| \, \cos \theta \, \mathcal{M} (r\n+\bar{v}) \, \mathcal{M}_1(r_\w \n +\bar{w})\\
                               &\phantom{+++++ ++++++++}
                \Big[ g(r^\star\n +\bar{v}) -g(r \n +\bar{v}) \Big]^2 \,\d \bar{v} \,  \d \bar{w} \\
&\geq\frac{(2\kappa)^{-1} \, \varrho_0}{2\pi  } \,
       \int_{\S}\d\n \int_{\{|r^\star-r|\geq \varrho_0\}} \, \d r \, \d r^\star\\
       &\int_{\bar{v},\bar{w} \in \n^\bot}
               \cos \theta \, \mathcal{M} (r\n+\bar{v}) \, \mathcal{M}_1(r_\w \n +\bar{w})  \Big[g(r^\star\n +\bar{v}) - g(r \n +\bar{v}) \Big]^2 \, \d \bar{v} \, \d \bar{w}
\end{split}  \end{equation*} which corresponds (up to the multiplicative factor $(2\kappa)^{-1} \, \varrho_0$)
to the integral for $|r-r^\star| \geq \varrho_0$ corresponding to
Maxwell molecules, {\em i.e.},
\begin{equation}\label{geq0}
-\int_{\R^3} \chi_{\{|r-r^\star| \geq \varrho_0\}}
\mathcal{L}_\mathsf{hs}(f) f \mathcal{M}^{-1}\d \v \geq
-\frac{\varrho_0}{2\kappa}\int_{\R^3} \chi_{\{|r-r^\star| \geq
\varrho_0\}} \mathcal{L}_\mathsf{max}(f) f \mathcal{M}^{-1}\d
\v.\end{equation}

Concerning now the second part of the integral (corresponding to
$|r^\star-r| \le \varrho_0$), we use that $|\q| \ge
|\bar{v}-\bar{w}|$ and we isolate the integration over $\bar{w}$:
  \begin{equation*}\begin{split}
 -\int_{\R^3}\,& \chi_{\{|r-r^\star| \leq\varrho_0\}}
\mathcal{L}_\mathsf{hs}(f) f \mathcal{M}^{-1}\d \v\geq \frac{1}{2\pi
} \, \int_{\S}\d\n \int_{\{|r^\star-r|\le \varrho_0\}}\d r\,\d r^\star \, |r^\star-r|
\\&
\int_{\bar{v} \in \n^\bot}
 (2\kappa)^{-1} \left( \frac{m_1}{2 \pi \Theta_1} \right)^{-3/2} \left( \int_{\bar{w} \in \n^\bot}
    \frac{|\bar{v}-\bar{w}| \, \mathcal{M}_1(\bar{w})}
    {\left(|\bar{v}-\bar{w}|^2+ (2\kappa)^{-2}|r^\star-r|^2\right)^{1/2}} \, \d \bar{w} \right) \\
&\phantom{+++++}   \mathcal{M} (r\n+\bar{v}) \, \mathcal{M}_1(r_\w
\n) \, \Big[ g(r^\star\n +\bar{v}) - g(r \n +\bar{v}) \Big]^2 \, \d
\bar{v}
 \end{split}  \end{equation*}
where we used the fact that, since $\bar{w}$ is orthogonal to $\n$,
$$
\mathcal{M}_1(r_\w \n+\bar{w})= \left( \frac{m_1}{2 \pi \Theta_1}
\right)^{-3/2} \mathcal{M}_1(\bar{w})\mathcal{M}_1(r_\w \n).
$$
Setting $\xi=|r^\star-r|/2\kappa $, if one were able to prove that there is
a constant $C$ such that \begin{equation}\label{convol} \int_{\R^2}
\frac{|\bar{v}-\bar{w}| \,
\mathcal{M}_1(\bar{w})}{\left(|\bar{v}-\bar{w}|^2+
\xi^2\right)^{1/2}} \, \d \bar{w}
     \geq C \, \int_{\R^2} \frac{\mathcal{M}_1(\bar{w})}{\left(|\bar{v}-\bar{w}|^2+ \xi^2\right)^{1/2}} \, \d
     \bar{w}\end{equation}
uniformly for $\bar{v} \in \R^2$ and $\xi\in [0,
\varrho_0/2\kappa]$, then one would obtain the desired estimate
(by doing all the previous transformations backward):
$$-\int_{\R^3} \chi_{\{|r-r^\star| \leq \varrho_0\}}
\mathcal{L}_\mathsf{hs}(f) f \mathcal{M}^{-1}\d \v \geq -C
\int_{\R^3} \chi_{\{|r-r^\star| \leq \varrho_0\}}
\mathcal{L}_\mathsf{max}(f) f \mathcal{M}^{-1}\d \v.$$

To study the convolution integral of \eqref{convol}, we make a
second splitting between $|\bar{w}-\bar{v}| \geq \varrho_1 >0$ and
$|\bar{w}-\bar{v}| \le \varrho_1$ (for some $\varrho_1
>0$). It gives
  \begin{multline*}
  \int_{\bar{w} \in \R^2} \frac{|\bar{v}-\bar{w}| \, \mathcal{M}_1(\bar{w})}{\left(|\bar{v}-\bar{w}|^2+ \xi^2\right)^{1/2}} \, \d \bar{w}  \\
     \geq \int_{\{|\bar{w}-\bar{v}| \geq \varrho_1\}} \frac{|\bar{v}-\bar{w}| \, \mathcal{M}_1(\bar{w})}{\left(|\bar{v}-\bar{w}|^2+ \xi^2\right)^{1/2}} \, \d \bar{w}
      \geq \varrho_1 \, \int_{\{|\bar{w}-\bar{v}| \geq \varrho_1\}} \frac{\mathcal{M}_1(\bar{w})}{\left(|\bar{v}-\bar{w}|^2+ \xi^2\right)^{1/2}} \, \d \bar{w} \\
      \geq \varrho_1 \, \Bigg( \int_{\R^2} \frac{\mathcal{M}_1(\bar{w})}{\left(|\bar{v}-\bar{w}|^2+ \xi^2\right)^{1/2}} \, \d \bar{w}
      - \int_{\{|\bar{w}-\bar{v}| \le \varrho_1\}} \frac{\mathcal{M}_1(\bar{w})}{\left(|\bar{v}-\bar{w}|^2+ \xi^2\right)^{1/2}} \, \d \bar{w} \Bigg).
  \end{multline*}
Then we use the obvious estimates
$$ \forall \, \bar{v} \in \R^2, \ \forall \, \xi \in [0, \varrho_0/2\kappa],
       \int_{\R^2} \frac{\mathcal{M}_1(\bar{w})}{\left(|\bar{v}-\bar{w}|^2+ \xi^2\right)^{1/2}} \, \d \bar{w} \ge
        \frac{C(\kappa,\varrho_0)}{1+|\bar{v}|}$$
for an explicit constant $C(\kappa,\varrho_0) >0$ depending only on
$\kappa,\varrho_0$, and
  $$\forall \, \bar{v} \in \R^2, \ \forall \,\xi \in [0,\varrho_0/2\kappa],
    \int_{\{|\bar{w}-\bar{v}| \le \varrho_1\}} \frac{\mathcal{M}_1(\bar{w})}{\left(|\bar{v}-\bar{w}|^2+ \xi^2\right)^{1/2}} \, \d \bar{w}
    \le C(\kappa,\varrho_1) \, e^{-|\bar{v}|^2}.$$
for an explicit constant $C(\kappa,\varrho_1) >0$ going to $0$ as
$\varrho_1$ goes to $0$. It yields for $\varrho_1$ small enough
(depending on $\varrho_0$)
\begin{equation}\label{estimate}
\int_{\{|\bar{w}-\bar{v}| \le \varrho_1\}}
\frac{\mathcal{M}_1(\bar{w})}{\left(|\bar{v}-\bar{w}|^2+
\xi^2\right)^{1/2}} \, \d \bar{w}
    \le \frac12 \,  \int_{\R^2} \frac{\mathcal{M}_1(\bar{w})}{\left(|\bar{v}-\bar{w}|^2+ \xi^2\right)^{1/2}} \, \d \bar{w}.
\end{equation}
  for  any $\bar{v} \in \R^2, \xi \in [0,\varrho_0/2\kappa]$ (we
  also refer to the Appendix A of this paper for a construction
  of the parameter $\varrho_1$).
   Consequently, for this choice of $\varrho_1$ we
  obtain \eqref{convol} with $C=\varrho_1/2$, {\em i.e.},
$$-\int_{\R^3} \chi_{\{|r-r^\star| \leq \varrho_0\}}
\mathcal{L}_\mathsf{hs}(f) f \mathcal{M}^{-1}\d \v \geq
-\frac{\varrho_1}{2}\int_{\R^3} \chi_{\{|r-r^\star| \leq
\varrho_0\}} \mathcal{L}_\mathsf{max}(f) f \mathcal{M}^{-1}\d \v.$$
This, together with estimate \eqref{geq0}, yield
  \[ \mathcal{D}_{{\mathsf{hs}}}(f) \geq \min \left\{ \frac{\varrho_0}{2\kappa}, \frac{\varrho_1}{2} \right\} \, \mathcal{D}_{\mathsf{max}}(f) \]
which concludes the proof.
\end{proof}

\begin{nb}\label{nb:lower} The constant $C^\star$ from the proof can be optimized according
to the parameter $\varrho_0$, by expliciting $\varrho_1$ as a
function of $\varrho_0$. Precisely, making use of Lemma A.1 given in
the Appendix,
$$C^\star=\min \left\{ \frac{\varrho_0}{2\kappa}, \frac{\varrho_1}{2}
\right\} \geq \dfrac{\eta}{\sqrt{5}}$$ with $\eta=
\sqrt{\frac{2\Theta_1}{m_1}}
\, \mathrm{erf}^{-1}\left(\frac{1}{2}\right)$ where $\mathrm{erf}^{-1}$
denotes the inverse error function,
$\mathrm{erf}^{-1}(\frac{1}{2})\simeq 0.4769.$ Notice that this
lower bound for $C^\star$ does not depend on the parameters $\a,\b$.\end{nb}

\begin{nb}
The above Proposition provides an estimate of the spectral gap of
$\mathcal{L}_\mathsf{hs}$ in $L^2(\mathcal{M}^{-1})$. Precisely, we
recall from \cite{ArLo} that the spectrum of
$\mathcal{L}_\mathsf{hs}$ is made of continuous (essential) spectrum
$\{\lambda \in \mathbb{R}\,;\, \lambda \leq -\nu_0\}$ where
$\nu_0=\inf_{\v \in \R^3}\sigma_{\mathsf{hs}}(\v)
>0$ and a  decreasing sequence of real eigenvalues with finite algebraic
multiplicities which unique possible cluster point is  $-\nu_0$.
Then, since $0$ is an eigenvalue of $\mathcal{L}_\mathsf{hs}$
associated to $\mathcal{M}$, one sees from the above Proposition
that the spectral gap $\mu_\mathsf{hs}$ of $\mathcal{L}_\mathsf{hs}$
satisfies
\begin{equation*}
\mu_\mathsf{hs}:=\min\bigg\{\lambda  :  -\lambda \in (-\nu_0,0), - \lambda \in
\mathfrak{S}(\mathcal{L}_\mathsf{hs})\setminus \{0\}\bigg\}\geq C^\star
\mu_\mathsf{max} \geq \frac{\eta\min\{\kappa,2\kappa(1-\kappa)\}}{\sqrt{5}}
.\end{equation*}
\end{nb}

To summarize, one gets the following coercivity estimate for the
Dirichlet form:
\begin{theo}\label{coercivity} For $\Q=\mathcal{L}_\mathsf{hs}$ or $\mathcal{L}_\mathsf{max}$, one has the
following:
$$-\int_{\R^3} \Q(f)(\v)f(\v)\mathcal{M}^{-1}(\v)\d\v
\geq \ap \|f-\varrho_f \mathcal{M}\|_{L^2(\mathcal{M}^{-1})}^2,
\qquad  \forall f \in \mathscr{D}(\Q)$$ where,
$\varrho_f=\displaystyle \int_{\R^3}f(\v)\d\v,$ and
$\ap=\mu_\mathsf{max}$ whenever  $\Q=\mathcal{L}_{\mathsf{max}}$
while, for hard-spheres interactions, {\em i.e.},
$\Q=\mathcal{L}_{\mathsf{hs}}$, one has $\ap \geq C^\star
\mu_\mathsf{max}$.
\end{theo}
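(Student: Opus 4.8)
The plan is to reduce everything to the two coercivity facts already in hand, namely the Maxwell estimate \eqref{nullspace}, which gives $\mathcal{D}_{\mathsf{max}}(g)\geq \mu_\mathsf{max}\|g\|_{L^2(\mathcal{M}^{-1})}^2$ whenever $g\perp\mathrm{Span}(\mathcal{M})$, and the comparison of Proposition \ref{linkHM}, $\mathcal{D}_{\mathsf{hs}}(f)\geq C^\star\mathcal{D}_{\mathsf{max}}(f)$. Since the left-hand side of the theorem is exactly $\mathcal{D}_{\mathsf{max}}(f)$ (resp. $\mathcal{D}_{\mathsf{hs}}(f)$) by the definition of the dissipation functionals, the only genuinely new point is to check that these functionals do not see the component of $f$ along $\mathcal{M}$, so that $\|f\|^2$ may be replaced by $\|f-\varrho_f\mathcal{M}\|^2$.

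First I would identify the projection onto $\mathrm{Span}(\mathcal{M})$. In $L^2(\mathcal{M}^{-1})$ with $\langle f,h\rangle=\int_{\R^3} f\,h\,\mathcal{M}^{-1}\d\v$, one has $\langle f,\mathcal{M}\rangle=\int_{\R^3} f\,\d\v=\varrho_f$ and $\langle\mathcal{M},\mathcal{M}\rangle=\int_{\R^3}\mathcal{M}\,\d\v=1$ (unit mass). Hence $\varrho_f\mathcal{M}$ is precisely the orthogonal projection of $f$ onto $\mathrm{Span}(\mathcal{M})$, and $g:=f-\varrho_f\mathcal{M}$ satisfies $g\perp\mathrm{Span}(\mathcal{M})$.

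Second I would show $\mathcal{D}_{\mathsf{max}}(f)=\mathcal{D}_{\mathsf{max}}(g)$. Writing $f=g+\varrho_f\mathcal{M}$ and using $\mathcal{L}_\mathsf{max}(\mathcal{M})=0$ (Theorem \ref{steadystate}), we get $\mathcal{L}_\mathsf{max}f=\mathcal{L}_\mathsf{max}g$, so that
\[
-\langle \mathcal{L}_\mathsf{max}f,f\rangle=-\langle \mathcal{L}_\mathsf{max}g,g\rangle-\varrho_f\langle \mathcal{L}_\mathsf{max}g,\mathcal{M}\rangle .
\]
The cross term vanishes because $\mathcal{L}_\mathsf{max}$ is self-adjoint and $\mathcal{L}_\mathsf{max}\mathcal{M}=0$, giving $\langle \mathcal{L}_\mathsf{max}g,\mathcal{M}\rangle=\langle g,\mathcal{L}_\mathsf{max}\mathcal{M}\rangle=0$. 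Thus $\mathcal{D}_{\mathsf{max}}(f)=\mathcal{D}_{\mathsf{max}}(g)$, and this quantity is well defined for every $f\in L^2(\mathcal{M}^{-1})\supset\mathscr{D}(\mathcal{L}_\mathsf{hs})$ since $\mathcal{L}_\mathsf{max}$ is bounded.

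Finally I would conclude in each case. For $\Q=\mathcal{L}_\mathsf{max}$, combining the two previous steps with \eqref{nullspace} applied to $g$ yields $\mathcal{D}_{\mathsf{max}}(f)=\mathcal{D}_{\mathsf{max}}(g)\geq\mu_\mathsf{max}\|g\|_{L^2(\mathcal{M}^{-1})}^2=\mu_\mathsf{max}\|f-\varrho_f\mathcal{M}\|_{L^2(\mathcal{M}^{-1})}^2$, which is the claim with $\ap=\mu_\mathsf{max}$. For $\Q=\mathcal{L}_\mathsf{hs}$, I would chain Proposition \ref{linkHM} with the Maxwell identity and \eqref{nullspace}:
\[
\mathcal{D}_{\mathsf{hs}}(f)\geq C^\star\,\mathcal{D}_{\mathsf{max}}(f)=C^\star\,\mathcal{D}_{\mathsf{max}}(g)\geq C^\star\mu_\mathsf{max}\,\|f-\varrho_f\mathcal{M}\|_{L^2(\mathcal{M}^{-1})}^2 .
\]
There is no deep obstacle here—the hard analytic work already lives in Proposition \ref{linkHM} and in the spectral computation behind \eqref{nullspace}. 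The only point requiring care is the reduction in the second step, which rests simultaneously on $\Q(\mathcal{M})=0$ and on self-adjointness of $\Q$; note also that I only need that reduction for the bounded operator $\mathcal{L}_\mathsf{max}$, applying Proposition \ref{linkHM} to $f$ directly, so no domain issue for $\mathcal{L}_\mathsf{hs}$ arises.
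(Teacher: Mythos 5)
Your proposal is correct and follows essentially the same route as the paper: the paper likewise reduces to the zero-mean case by setting $h=f-\varrho_f\mathcal{M}$, uses $\Q(h)=\Q(f)$ together with $\int_{\R^3}\Q(f)\,\d\v=0$ (equivalent to your self-adjointness argument for killing the cross term), and then invokes \eqref{nullspace} and Proposition \ref{linkHM}. The only cosmetic difference is that you apply Proposition \ref{linkHM} to $f$ before passing to $g$, while the paper applies the zero-mean estimate directly to $h$; both are fine.
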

\begin{proof} If $\varrho_f=0$, the proof follows directly from
Proposition \ref{linkHM} and \eqref{nullspace}. Now, if $f$ is a
given function with non-zero mean $\varrho_f$, set
$h=f-\varrho_f\mathcal{M}$. Then, $\varrho_h=0$ so that
$$-\int_{\R^3} \Q (h)(\v)h(\v)\mathcal{M}^{-1}(\v)\d\v \geq
\mu \|h\|_{L^2(\mathcal{M}^{-1})}^2.$$ This leads to the result
since $\Q(h)=\Q(f)$ and $\ds \int_{\R^3} \Q(f) \d\v=0$.
\end{proof}

Adopting now the entropy language, one obtains the following
relaxation rate, which is also new in the context of linear
Boltzmann equation:

\begin{cor}\label{dechomogene}
Let $f_0(\v) \in L^2(\R_\v^3,\mathcal{M}^{-1}(\v)\d\v)$ be
given and let $f(t)$ be the unique solution of \eqref{evol} with
initial condition $f(t=0)=f_0.$ Then, for any $t \geq 0,$ one has
the following
$$\left\|f(t)-\mathcal{M}\right\|_{L^2(\mathcal{M}^{-1})} \leq \exp(-\ap  t)
\left\|f_0-\mathcal{M}\right\|_{L^2(\mathcal{M}^{-1})}, \qquad
\forall t \geq 0,$$ where $\ap=\mu_\mathsf{max}$ when
$\Q=\Q_{\mathsf{max}}$ while, for hard-spheres interactions, {\em i.e.},
$\Q=\Q_{\mathsf{hs}}$, one has $\ap \geq C^\star \mu_\mathsf{max}$.
\end{cor}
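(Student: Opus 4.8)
The plan is to deduce the exponential decay directly from the coercivity estimate of Theorem~\ref{coercivity} together with a Gronwall argument, treating the Maxwell and hard-spheres cases simultaneously by writing $\ap=\mu_\mathsf{max}$ in the former and $\ap \geq C^\star\mu_\mathsf{max}$ in the latter, so that in either situation the Dirichlet form is bounded below with constant $\ap$.

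First I would reduce to a mean-zero perturbation. Since the number density is the only conserved macroscopic quantity, integrating \eqref{evol} in $\v$ and using $\int_{\R^3}\Q(f)\,\d\v=0$ shows that $\varrho_{f(t)}:=\int_{\R^3}f(t,\v)\,\d\v$ is constant in time, equal to $\varrho_{f_0}$. Setting $g(t)=f(t)-\varrho_{f_0}\mathcal{M}$, the linearity of $\Q$ together with $\Q(\mathcal{M})=0$ gives $\partial_t g=\Q(g)$, while mass conservation yields $\varrho_{g(t)}=0$ for all $t$. Under the standing unit-mass normalization $\varrho_{f_0}=1$ this perturbation is exactly $g=f-\mathcal{M}$, which is the object appearing in the statement.

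Next I would differentiate the squared norm. Using the entropy identity \eqref{dissip} (equivalently, the self-adjointness of $\Q$ and the definition of the dissipation functional), one has
\[
\frac{\d}{\d t}\|g(t)\|_{L^2(\mathcal{M}^{-1})}^2
 = 2\int_{\R^3}\Q(g(t))\,g(t)\,\mathcal{M}^{-1}\,\d\v
 = -2\,\mathcal{D}(g(t)),
\]
where $\mathcal{D}$ stands for $\mathcal{D}_\mathsf{hs}$ or $\mathcal{D}_\mathsf{max}$. Because $\varrho_{g(t)}=0$, Theorem~\ref{coercivity} applies with $g=g-\varrho_g\mathcal{M}$ and gives $\mathcal{D}(g(t))\geq \ap\,\|g(t)\|_{L^2(\mathcal{M}^{-1})}^2$, whence
\[
\frac{\d}{\d t}\|g(t)\|_{L^2(\mathcal{M}^{-1})}^2 \leq -2\ap\,\|g(t)\|_{L^2(\mathcal{M}^{-1})}^2 .
\]
Gronwall's lemma then yields $\|g(t)\|_{L^2(\mathcal{M}^{-1})}^2 \leq e^{-2\ap t}\|g(0)\|_{L^2(\mathcal{M}^{-1})}^2$, which is the claimed estimate after taking square roots.

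The main obstacle is the rigorous justification of the differentiation step in the hard-spheres case, where $\mathcal{L}_\mathsf{hs}$ is unbounded and $f_0$ need not lie in $\mathscr{D}(\mathcal{L}_\mathsf{hs})$. For the Maxwell model $\mathcal{L}_\mathsf{max}$ is bounded and the computation is immediate. For hard spheres I would invoke the $C^0$-semigroup of contractions generated by $\mathcal{L}_\mathsf{hs}$ on $L^2(\mathcal{M}^{-1})$ (see \cite{ArLo}): for $f_0\in\mathscr{D}(\mathcal{L}_\mathsf{hs})$ the solution is classical and the identity above holds pointwise in $t$, while for general $f_0\in L^2(\mathcal{M}^{-1})$ the estimate follows by density, using the contraction property to pass to the limit. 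Since the entropy identity \eqref{dissip} has already been recorded for precisely this purpose, this step amounts to bookkeeping once the semigroup framework is in place.
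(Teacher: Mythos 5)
Your argument is correct and is essentially the one the paper intends: the corollary is deduced from the coercivity estimate of Theorem~\ref{coercivity} via the entropy identity \eqref{dissip} and Gronwall's lemma, exactly as the paper already spelled out for the Maxwell case in the subsection preceding Theorem~\ref{Spec}. Your additional remarks on the reduction to a mean-zero perturbation under the unit-mass normalization and on the semigroup/density justification of the differentiation step for the unbounded hard-spheres operator are correct refinements of details the paper leaves implicit.
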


We state another corollary of the above Theorem \ref{coercivity} in
which we strengthen the coercivity estimate:
\begin{cor}\label{coercivity2}  For $\Q=\mathcal{L}_\mathsf{hs}$ or $\mathcal{L}_\mathsf{max}$, there exists $c_\sigma >0$ such
that
$$-\int_{\R^3} \Q(f)(\v)f(\v)\mathcal{M}^{-1}(\v)\d\v
 \geq c_\sigma \|(f-\varrho_f
\mathcal{M}) \sqrt{\sigma} \|_{L^2(\mathcal{M}^{-1})}^2 \qquad
\qquad \forall f \in \mathscr{D}(\Q)$$ where,
$\varrho_f=\displaystyle \int_{\R^3}f(\v)\d\v$ and $\sigma(v)$ is
the collision frequency associated to $\Q$.
\end{cor}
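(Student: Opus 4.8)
The plan is to reduce to the mean-zero case and then combine the loss--gain splitting of the collision operator with the already-established unweighted coercivity of Theorem~\ref{coercivity}. For $\Q=\mathcal{L}_\mathsf{max}$ the collision frequency is the constant $\sigma_\mathsf{max}\equiv 1$, so the weighted and unweighted norms coincide and the statement is nothing but Theorem~\ref{coercivity} with $c_\sigma=\mu_\mathsf{max}$. Thus only the hard-spheres case $\Q=\mathcal{L}_\mathsf{hs}$ requires an argument, and this is where the weight $\sigma_\mathsf{hs}\sim(1+|\v-u_1|)$ genuinely strengthens the estimate.

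First I would set $h=f-\varrho_f\mathcal{M}$, so that $\varrho_h=0$. Since $\mathcal{M}$ lies in the kernel of the self-adjoint operator $\mathcal{L}_\mathsf{hs}$, the quadratic functional is unchanged,
\[
-\int_{\R^3}\mathcal{L}_\mathsf{hs}(f)\,f\,\mathcal{M}^{-1}\d\v=\mathcal{D}_\mathsf{hs}(h),
\]
while $\|(f-\varrho_f\mathcal{M})\sqrt{\sigma}\|_{L^2(\mathcal{M}^{-1})}^2=\int_{\R^3}\sigma_\mathsf{hs}\,h^2\,\mathcal{M}^{-1}\d\v$. It therefore suffices to prove $\mathcal{D}_\mathsf{hs}(h)\geq c_\sigma\int_{\R^3}\sigma_\mathsf{hs}\,h^2\,\mathcal{M}^{-1}\d\v$ for mean-zero $h\in\mathscr{D}(\mathcal{L}_\mathsf{hs})$.

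The crucial step is the splitting $\mathcal{L}_\mathsf{hs}=\mathcal{L}^+_\mathsf{hs}-\sigma_\mathsf{hs}\,\mathrm{Id}$ into gain and loss parts, which gives
\[
\int_{\R^3}\sigma_\mathsf{hs}\,h^2\,\mathcal{M}^{-1}\d\v=\mathcal{D}_\mathsf{hs}(h)+\langle\mathcal{L}^+_\mathsf{hs}h,h\rangle.
\]
Here I would invoke the boundedness on $L^2(\mathcal{M}^{-1})$ of the non-local part $\mathcal{L}^+_\mathsf{hs}$ (indeed its compactness, as established in \cite{ArLo}): setting $K:=\|\mathcal{L}^+_\mathsf{hs}\|$, Cauchy--Schwarz gives $\langle\mathcal{L}^+_\mathsf{hs}h,h\rangle\leq K\|h\|_{L^2(\mathcal{M}^{-1})}^2$. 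Feeding in the unweighted coercivity $\|h\|_{L^2(\mathcal{M}^{-1})}^2\leq\mu_\mathsf{hs}^{-1}\mathcal{D}_\mathsf{hs}(h)$ from Theorem~\ref{coercivity} (valid since $\varrho_h=0$) yields
\[
\int_{\R^3}\sigma_\mathsf{hs}\,h^2\,\mathcal{M}^{-1}\d\v\leq\Big(1+\frac{K}{\mu_\mathsf{hs}}\Big)\mathcal{D}_\mathsf{hs}(h),
\]
which is the claim with the explicit constant $c_\sigma=\mu_\mathsf{hs}/(\mu_\mathsf{hs}+K)$.

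The only genuine work lies in two checks. All quantities must be finite: for $f\in\mathscr{D}(\mathcal{L}_\mathsf{hs})$ one has $\sigma_\mathsf{hs}f\in L^2(\mathcal{M}^{-1})$, whence $\int_{\R^3}\sigma_\mathsf{hs}h^2\mathcal{M}^{-1}\d\v\leq\|\sigma_\mathsf{hs}h\|_{L^2(\mathcal{M}^{-1})}\|h\|_{L^2(\mathcal{M}^{-1})}<\infty$ by Cauchy--Schwarz, so the loss--gain decomposition of $\mathcal{D}_\mathsf{hs}$ is licit. The main obstacle is precisely the input that $\mathcal{L}^+_\mathsf{hs}$ is a bounded operator on the \emph{weighted} space $L^2(\mathcal{M}^{-1})$; this is the Grad-type decomposition for the granular linear operator proved in \cite{ArLo}. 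Should one prefer to avoid quoting it, the same conclusion can be reached by splitting $\R^3=\{|\v-u_1|\leq R\}\cup\{|\v-u_1|>R\}$: on the inner ball $\sigma_\mathsf{hs}\leq\nu_1(1+R)$ controls that part by $\|h\|^2$ and hence by $\mathcal{D}_\mathsf{hs}(h)$ through Theorem~\ref{coercivity}, while the outer contribution is absorbed into the loss term of $\mathcal{D}_\mathsf{hs}$ once one checks that there the gain part is a small fraction of the loss part for $R$ large.
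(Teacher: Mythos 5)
Your proposal is correct and follows essentially the same route as the paper: reduce to the mean-zero case, use the loss--gain splitting $\mathcal{L}_\mathsf{hs}=\mathscr{K}-\sigma_\mathsf{hs}\,\mathrm{Id}$ with $\mathscr{K}$ bounded on $L^2(\mathcal{M}^{-1})$ (from \cite{ArLo}), bound the gain term by $\|\mathscr{K}\|\,\|h\|^2$, and absorb it via the unweighted coercivity of Theorem~\ref{coercivity}. Your constant $c_\sigma=\mu_\mathsf{hs}/(\mu_\mathsf{hs}+K)$ matches the paper's $C^\star\mu_\mathsf{max}/(\|\mathscr{K}\|+C^\star\mu_\mathsf{max})$ up to replacing $\mu_\mathsf{hs}$ by its lower bound $C^\star\mu_\mathsf{max}$.
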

\begin{proof} If $\Q=\mathcal{L}_\mathsf{max}$, since $\sigma_{\mathsf{max}}(v)=1$ the estimate is nothing
but Theorem \ref{coercivity}. Let us consider now the hard-spheres
case, $\Q=\mathcal{L}_\mathsf{hs}.$ Arguing as in the proof of
Theorem \ref{coercivity}, it suffices to prove the result for $f
\bot \, \mathcal{M}$, {\em i.e.}, whenever $\varrho_f=0$. We recall from
\cite{ArLo} that $\mathcal{L}_\mathsf{hs}$ splits as
$$\mathcal{L}_\mathsf{hs}f=\mathscr{K}f-\sigma_\mathsf{hs}
f, \quad  f \in \D(\mathcal{L}_\mathsf{hs})$$ where $\mathscr{K}$ is
a bounded (and compact) operator in $L^2(\mathcal{M}^{-1})$. We then have
\begin{equation*}\begin{split}
\|f\sqrt{\sigma_{\mathsf{hs}}}
\|^2_{L^2(\mathcal{M}^{-1})}&=\int_{\R^3_\v}\mathscr{K}(f)\,f \,
\mathcal{M}^{-1}\d \v -\int_{\R^3_\v}\mathcal{L}_\mathsf{hs}(f)\,f
\, \mathcal{M}^{-1}\d \v\\
&\leq \|\mathscr{K}\|
\|f\|^2_{L^2(\mathcal{M}^{-1})}+\D_\mathsf{hs}(f)\leq
\bigg(\frac{\|\mathscr{K}\|}{\mu_\mathsf{max}C^\star} +1
\bigg)\D_\mathsf{hs}(f)
 \end{split}\end{equation*}
where $\|\mathscr{K}\|$ stands for the norm of $\mathscr{K}$ as a
bounded operator on $L^2(\mathcal{M}^{-1})$ and we used Theorem
\ref{coercivity}. The corollary follows with
$$
c_\sigma=\frac{C^\star
\mu_\mathsf{max}}{\|\mathscr{K}\|+C^\star\mu_\mathsf{max}}.
$$
\end{proof}

\begin{nb}
Here again, as in Prop.~\ref{linkHM}, the constant $c_\sigma >0$ can
be quantitatively estimated using for instance the
estimate$\|\mathscr{K}\| \leq \frac{2\pi}{(1+\tau)^2}
\sqrt{\frac{\pi\Theta_1}{m_1}} $ that can be deduced without major
difficulty  from the explicit expression of $\mathscr{K}$ provided
in \cite{ArLo} with $\tau=(1-2\kappa)/\kappa >0$.
\end{nb}

\begin{nb} Recalling that $\sigma_\mathsf{hs}$ behaves like
$(1+|v|)$, the above corollary allows to control from below the
entropy production functional by the weighted
$L^2((1+|v|)\mathcal{M}^{-1},\d\v)$ norm. Such a weighted estimate
shall be very useful for the diffusion approximation.
\end{nb}

\section{Diffusion Approximation}\label{sec:diff}
\setcounter{equation}{0}

We shall assume again in this whole section that $u_1=0$. From the results of
the previous section, it is possible to derive some exact
convergence results for the solution of the re-scaled linear kinetic
Boltzmann equation
\begin{equation}\label{boltrescaled} \varepsilon \partial_t
f_\varepsilon(t,\x,\v)+v \cdot \nabla_\x
f_\varepsilon(t,\x,\v)=\dfrac{1}{\varepsilon}\Q(f_\varepsilon)(t,\x,\v),\end{equation}
with initial condition $f_\varepsilon(\x,\v,0)=f_0(\x,\v) \geq0$,
with $(x,v) \in \R^3  \times \R^3$. Note that all the analysis we
perform here is also valid if the  spatial domain denotes  the
three-dimensional torus $\mathbb{T}^3_\x$. One shall prove that
$f_\var$ converges, as $\var \to 0$, to $\mathcal{M}(v)\varrho$
where $\varrho=\varrho(t,\x)$ is the solution to the (parabolic)
diffusion equation:
\begin{equation}\label{diff}\begin{cases}
\partial_t \varrho(t,\x)  =
\nabla_x \cdot \left(\mathsf{D} \nabla_\x \varrho(t,\x) + u_1 \, \rho \right),\qquad \qquad
t \geq 0, \:\x \in \R^3,\\
\varrho(0,x) =\varrho_0(\x)=\ds\int_{\R_\v^3}f_0(\x,\v)\d\v
\end{cases}
\end{equation}
where the diffusion coefficient  $\mathsf{D}$ depends on the model
we investigate (hard-sphere interactions or Maxwell molecules). One
shall adopt here the strategy of \cite{DeGoPo,LiTo}. Namely, to
prove the convergence of the solution to \eqref{boltrescaled}
towards the solution $\varrho$ of \eqref{diff}, the idea is to use
the {\it a priori} estimate given by the production of entropy, as
in~\cite{LiTo} where this idea was applied to discrete velocity
models of the Boltzmann equation. Let us define the number density
and the current vector
  \[ \varrho_\var(t,\x) = \int_{\R_\v^3} f_\var(t,\x,\v)
  \d\v, \qquad  j_\var(t,\x) = \frac{1}{\var} \, \int_{\R_\v^3} f_\var(t,\x,\v) \, v \,  \, \d\v. \]
We also define $h_\var$ as
  \[ h_\var (t,\x,\v) =\dfrac{1}{\var}\bigg( f_\var(t,\x,\v) - \varrho_\var(t,\x)\mathcal{M}(\v) \bigg).\]
Integrating \eqref{boltrescaled} with respect to $\x$ and $\v$ and
using the fact that the mean of $\Q(f_\var)$ is zero, one gets the
mass conservation identity \begin{equation}\label{mass}
\int_{\R^3_\x \times \R_\v^3} f_\var(\x,\v,t)\d\x\d\v=\int_{\R^3_\x
\times \R_\v^3}f_0(\x,\v)\d\x\d\v,\end{equation} which means (using the
fact that the equation preserves non-negativity) that,
for any $T
>0$, the sequence $\varrho_\var(\x,t)$ is bounded in
$L^{\infty}(0,T;L^1(\R^3_\x)).$ Now, multiplying
\eqref{boltrescaled} by $f_\var\mathcal{M}^{-1}$ and integrating
over $\R^3_\x \times \R_\v^3$, we get
\begin{multline}\label{Apriori1}
\dfrac{1}{2}\dfrac{\d}{\d t}\int_{\R^3_\x \times \R_\v^3}
f^{\,2}_\var(t,\x,\v) \mathcal{M}^{-1}(\v)\d\x\d\v
+\dfrac{1}{2\var}\int_{\R^3_\x \times
\R_\v^3}\mathrm{div}_\x\left(\v
f^{\,2}_\var(t,\x,\v)\right)\mathcal{M}^{-1}(\v)\d\x\d\v \\
-\dfrac{1}{\var^2}\int_{\R^3_\x \times \R_\v^3}f_\var\, \Q (f_\var)
\mathcal{M}^{-1}\d\x\d\v=0.
\end{multline}
Now, because of the divergence form of the integrand, one sees that
the second term in \eqref{Apriori1} is zero while, because of
Corollary \ref{coercivity2},
\begin{equation}\begin{split}\label{Apriori2}
-&\dfrac{1}{\var^2}\int_{\R^3_\x \times \R_\v^3}f_\var\, \Q (f_\var)
\mathcal{M}^{-1}\d\x\d\v \\
 &\geq \dfrac{c_\sigma}{\var^2}\int_{\R^3_\x}
\|f_\var(t,x,\cdot)-\varrho_\var(t,x) \mathcal{M}\|_{L^2(\R_v^3,\sigma(v)
\mathcal{M}^{-1}(v)\d v)}^2\d x\\
&= c_\sigma \int_{\R^3_\x \times
\R_\v^3}h^{\,2}_\var(t,\x,\v)\mathcal{M}^{-1}(\v)\sigma(v)\d\x\d\v.\end{split}
\end{equation}
Consequently, Eq. \eqref{Apriori1}, together with \eqref{Apriori2},
leads to
$$\dfrac{1}{2}\dfrac{\d}{\d t}\int_{\R^3_\x \times \R_\v^3}
f^2_\var(\x,\v,t)\mathcal{M}^{-1}(\v)\d\x\d\v \leq
-c_\sigma\int_{\R^3_\x \times \R_\v^3}h^2_\var(\x,\v,t)\sigma(v)
\mathcal{M}^{-1}(\v)\d\x\d\v.$$ Defining therefore the following
Hilbert space:
$$ \mathcal{H}=L^2(\R^3_\x \times \R_v^3,\mathcal{M}^{-1}(\v)\d\x\d \v)$$ endowed
with its natural norm $\|\cdot\|_{\mathcal{H}}$, one has
\begin{equation}\label{estim}
\left\|f_\var(t)\right\|_{\mathcal{H}}^2 + 2c_\sigma \int_0^t
\left\|h_\var(s)\sqrt{\sigma}\right\|_{\mathcal{H}}^2\d s \leq
 \left\|f_0\right\|_{\mathcal{H}}^2, \qquad \forall t \geq
 0.\end{equation}
We obtain the following  {\it a priori} bounds:
\begin{propo}\label{apriori} For any $\var >0$, let $f_\var(t)$ denotes the unique
solution to \eqref{boltrescaled} with $f_0 \in \mathcal{H}$, $f_0
\geq 0$. 
 Then, for any $0 \leq T
< \infty$
\begin{enumerate}
\item The sequence $(f_\var)_\var$ is bounded in
$L^{\infty}\left(0,T\,;\,\mathcal{H}\right),$
\item the sequence $(\sqrt{\sigma}h_\var)_\var$ is bounded in $L^2\left(0,T;\mathcal{H}\right),$
\item the density sequence $(\varrho_\var)_\var$ is bounded in
$L^{\infty}(0,T\,;\,L^1(\R^3_\x) \cap L^2(\R^3_\x)),$
\item the current sequence $(j_\var)_\var$ is bounded in $\left[L^2((0,T)
\times \R^3_\x)\right]^3.$
\end{enumerate}
\end{propo}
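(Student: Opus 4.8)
The plan is to read off all four bounds directly from the single energy inequality \eqref{estim}, which already packages the dissipative structure via the weighted coercivity of Corollary~\ref{coercivity2}; essentially all the work lies in deriving \eqref{estim}, so what remains is bookkeeping plus one genuine idea for the current. Items (1) and (2) are immediate. Discarding the non-negative second term in \eqref{estim} gives $\|f_\var(t)\|_{\mathcal{H}}^2 \leq \|f_0\|_{\mathcal{H}}^2$ for every $t\geq 0$, with a bound independent of both $t$ and $\var$, which is (1). Discarding instead the first term and taking $t=T$ gives $\int_0^T\|h_\var(s)\sqrt{\sigma}\|_{\mathcal{H}}^2\,\d s \leq (2c_\sigma)^{-1}\|f_0\|_{\mathcal{H}}^2$, which is (2).

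Next I would treat item (3). The $L^1$ bound follows from the mass identity \eqref{mass}: since the flow preserves non-negativity we have $\varrho_\var(t,\cdot)\geq 0$, hence $\|\varrho_\var(t)\|_{L^1(\R^3_\x)} = \int_{\R^3_\x\times\R^3_\v}f_\var\,\d\x\,\d v = \int_{\R^3_\x\times\R^3_\v}f_0\,\d\x\,\d v$, constant in $t$ and $\var$. For the $L^2$ bound I apply Cauchy--Schwarz in $v$ against the weight $\mathcal{M}$: writing $f_\var = (f_\var\mathcal{M}^{-1/2})\,\mathcal{M}^{1/2}$ and using $\int_{\R^3_\v}\mathcal{M}\,\d v = 1$ yields the pointwise bound $\varrho_\var(t,\x)^2 \leq \int_{\R^3_\v}f_\var^2\mathcal{M}^{-1}\,\d v$; integrating in $\x$ gives $\|\varrho_\var(t)\|_{L^2(\R^3_\x)}^2 \leq \|f_\var(t)\|_{\mathcal{H}}^2 \leq \|f_0\|_{\mathcal{H}}^2$ by (1).

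The only delicate point is item (4), and it is where I expect the real work. Decomposing $f_\var = \varrho_\var\mathcal{M} + \var\,h_\var$, the crucial observation is that the equilibrium is centred because $u_1=0$, i.e. $\int_{\R^3_\v}v\,\mathcal{M}(v)\,\d v = 0$, so that the singular prefactor $1/\var$ cancels and $j_\var(t,\x) = \int_{\R^3_\v}h_\var(t,\x,v)\,v\,\d v$. I then estimate this by Cauchy--Schwarz with the weight $\sigma$ inserted, $|j_\var(t,\x)|^2 \leq \Big(\int_{\R^3_\v}h_\var^2\,\sigma\,\mathcal{M}^{-1}\,\d v\Big)\Big(\int_{\R^3_\v}\tfrac{|v|^2}{\sigma(v)}\,\mathcal{M}\,\d v\Big)$. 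The second factor is a finite constant: for Maxwell molecules $\sigma\equiv 1$ and every moment of $\mathcal{M}$ is finite, while for hard spheres $\sigma\geq\nu_0(1+|v|)$ forces $|v|^2/\sigma \leq |v|/\nu_0$, again integrable against $\mathcal{M}$. Integrating in $\x$ and then over $(0,T)$ turns the first factor into $\int_0^T\|h_\var(s)\sqrt{\sigma}\|_{\mathcal{H}}^2\,\d s$, which is bounded by (2); this gives (4).

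I expect the main obstacle to be exactly the interplay in (4): the a priori estimate controls $h_\var$ only in the $\sqrt{\sigma}$-weighted norm, whereas $j_\var$ carries a dangerous factor $1/\var$ together with an unbounded weight $v$. Both difficulties are resolved simultaneously, the first by the centring cancellation $\int_{\R^3_\v} v\,\mathcal{M}\,\d v = 0$ (valid precisely because $u_1=0$) and the second by distributing $\sigma$ in Cauchy--Schwarz. This is where the weighted coercivity of Corollary~\ref{coercivity2}, rather than the plain Theorem~\ref{coercivity}, is essential, since it is what keeps $|v|^2/\sigma$ integrable in the hard-spheres regime where $v$ grows unboundedly.
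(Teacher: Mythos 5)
Your proof is correct and follows essentially the same route as the paper: items (1)--(3) are read off from \eqref{estim}, \eqref{mass} and the Cauchy--Schwarz bound $\varrho_\var^2\leq\int f_\var^2\mathcal{M}^{-1}\,\d v$, and item (4) rests on the same cancellation $\int_{\R^3_\v}v\,\mathcal{M}\,\d v=0$ giving $j_\var=\int_{\R^3_\v}v\,h_\var\,\d v$, followed by Cauchy--Schwarz in $v$. The only cosmetic difference is that you insert the weight $\sigma$ into the Cauchy--Schwarz step while the paper uses the unweighted inequality together with the lower bound $\sigma\geq\nu_0>0$ to invoke point (2); the two are equivalent here, and in particular the weighted coercivity of Corollary~\ref{coercivity2} is not actually needed for item (4), since $\int|v|^2\mathcal{M}\,\d v$ is finite in any case.
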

\begin{proof} The first two points are direct consequences of
\eqref{estim} with
$$\sup_{\var
>0}\left\|f_\var\right\|_{L^{\infty}\left(0,T\,;\,\mathcal{H}\right)}
\leq \left\|f_0\right\|_{\x,\v}, \qquad \sup_{\var
>0}\left\|\sqrt{\sigma}h_\var\right\|_{L^2\left(0,T\,;\,\mathcal{H}\right)} \leq
 (2c_\sigma)^{-1/2}\left\|f_0\right\|_{\x,\v}.$$
Now, Eq. \eqref{mass} proves that the number density sequence
$(\varrho_\var)_\var$ is bounded in
$L^{\infty}(0,T\,;\,L^1(\R^3_\x))$ and, according to Cauchy-Schwarz
inequality,
$$0 \leq \varrho_\var(t,\x) \leq \bigg(\int_{\R_\v^3}
f^{\,2}_\var(t,\x,\v) \mathcal{M}^{-1}(\v)\d \v\bigg)^{1/2}$$ we see
from point \textit{(1)} that $(\varrho_\var)_\var$ is also bounded
in $L^{\infty}(0,T\,;\,L^2(\R^3_\x))$. Finally, since
$f_\var=\varrho_\var \mathcal{M} + \var h_\var$ and $\int_{\R_\v^3}
\v  \mathcal{M}(\v)\d \v= 0$, one has
\begin{equation*}
\int_0^T \d t \int_{\R^3_\x} \left|j_\var(t,\x)\right|^2 \d\x =
\int_0^T\d t\int_{\R^3_\x}\d \x \left|\int_{\R_\v^3} v
h_\var(t,\x,\v)\d\v\right|^2
 \end{equation*}
while, from Cauchy-Schwarz inequality and the fact that $\sigma$ is
bounded from below
\begin{equation*}
 \left|\int_{\R_\v^3} v
h_\var(t,\x,\v)\d\v\right|^2  \leq \bigg(\int_{\R_\v^3}|\v
|^2\mathcal{M}(\v) \d\v\bigg)\bigg(\int_{\R_\v^3}h^{\,2}_\var
\mathcal{M}^{-1} \d\v\bigg)
\end{equation*}
so that
$$\int_0^T \d t \int_{\R^3_\x} \left|j_\var(t,\x)\right|^2 \d\x \leq
\frac{3\Theta^{\#}}{m}\int_0^T \left\|h_\var(t)\right\|_{\x,\v}^2 \d
t$$ and the conclusion follows from point \textit{(2)}.\end{proof}
\begin{nb}
 Since $f_\var=\var h_\var + \varrho_\var \mathcal{M},$ noticing that $\int_{\R^3_\v}\sigma(v) \mathcal{M}(v) \d v < \infty$, one
deduces from the above points \textit{(2)} and \textit{(3)} and that
the sequence $(\sqrt{\sigma}f_\var)_\var$ is bounded in
$L^2\left(0,T;\mathcal{H}\right).$
\end{nb}
For any $T >0$, we define
$$\O_T=(0,T)
\times \R^3_\v \times \R^3_x \quad \text{ and } \quad \d\mu_T=\d x
\d v \d t.$$ The bounds provided by Prop.~\ref{apriori} allows
to assume that, up to a subsequence,
$$f_\var \rightharpoonup f \quad \text{ in } \quad  L^2(\O_T\,;\,\sigma \mathcal{M}^{-1}\d\mu_T), \qquad \qquad
h_\var \rightharpoonup h \quad \text{ in } \quad L^2(\O_T\,;\,\sigma
\mathcal{M}^{-1}\d\mu_T);$$
$$\varrho_\var \rightharpoonup \varrho  \quad \text{ in } \quad L^2((0,T) \times
\R^3_\x), \qquad  \qquad   j_\var \rightharpoonup j \quad \text{ in
} \quad  \left[L^2((0,T) \times \R^3_\x)\right]^3.$$ Let $\Psi \in
 L^2(\O_T,\sigma^{-1}
\mathcal{M}\d\mu_T)=\left[L^2(\O_T,\sigma
\mathcal{M}^{-1}\d\mu_T)\right]^\star$ be given. Since
$\sigma=\sigma_\mathsf{max}$ is constant while
$\sigma=\sigma_\mathsf{hs}$ behaves asymptotically like $(1+|\v|)$,
one easily has from Cauchy-Schwarz
$$\varphi(t,x)=\int_{\R_\v^3} \mathcal{M}(v)\Psi(t,x,v)
\d \v \in L^2((0,T) \times \R^3_x),$$
%
and therefore
$$\lim_{\var \to 0}\int_0^T \d t \int_{\R^3_x} \varrho_\var
(t,x)\varphi(t,x)\d x=\int_0^T \d t \int_{\R^3_x} \varrho
(t,x)\varphi(t,x)\d x.$$
Thus, writing $f_\var=\varrho_\var
\mathcal{M} + \var h_\var$, one checks that
$$\lim_{\var \to 0}\int_{\O_T}f_\var \Psi \d\mu_T =\int_0^T \d t \int_{\R^3_x}
\varrho (t,x)\varphi(t,x)\d x= \int_{\O_T} \varrho \mathcal{M} \Psi
\d\mu_T,$$ {\em i.e.}, $f_\var \rightharpoonup \varrho \mathcal{M}$ in
$L^2(\O_T,\sigma \mathcal{M}^{-1}\d\mu_T)$. In particular,
$f(t,x,v)=\varrho(t,x)\mathcal{M}(v).$ Moreover,
\begin{equation}\label{hep} \lim_{\var
\to 0} \int_{\O_T} h_\var \Psi \d\mu_T = \int_{\O_T} h \Psi
\d\mu_T.\end{equation} for any $\Psi=\Psi(t,x,v) \in L^2(\O_T,
\sigma^{-1}\mathcal{M}\d\mu_T)$. Now, choosing $\Psi$ independent of
$v$, one sees that
$$\int_{\R^3_\v} h(t,x,v)\d v=0, \qquad \forall t >0, \quad \x \in
\R^3_\x.$$ Finally, using in \eqref{hep} a test function
$\Psi(t,x,v)=v\varphi(t,x)$ with $\varphi \in L^2((0,T) \times
\R^3_\x)$, we deduces from the weak convergence of $j_\var$ to $j$
that
$$j(t,x)=\int_{\R^3_\v} v h(t,x,v) \d \v.$$
Finally, integrating equation \eqref{boltrescaled} over $\R_\v^3$
leads to the continuity equation
\begin{equation}\label{continuvar}
 \partial_t \varrho_\var(t,x)
+ \mathrm{div}_\x  j_\var(t,x)  =0, \qquad \qquad \forall \var
>0.\end{equation}
We deduce at the limit that
\begin{equation}\label{continu} \partial_t \varrho(t,x) +  \mathrm{div}_\x  j(t,x)=0, \qquad \qquad t>0, \quad x \in \mathbb{T}_x^3\end{equation}
in the \textit{distributional sense}. We summarize these first
results in the following:
\begin{propo}\label{weakconv} Under the assumptions of Proposition \ref{apriori}, for any $T >0$, up
to a subsequence,
\begin{enumerate}[i)]
\item $(\varrho_\var)$ converges weakly in $L^2((0,T) \times
\R^3_\x)$ to some $\varrho;$
\item $(h_\var)$ converges weakly in
$L^2(\O_T,\sigma\mathcal{M}^{-1}\d\mu_T)$ to some function $h$ with
$\null\qquad\qquad\qquad$  $\ds \int_{\R^3_\v} h(t,x,v)\d v=0$;
\item $(f_\var)$ converges weakly to $\varrho \mathcal{M}$ in
$L^2(\O_T,\sigma\mathcal{M}^{-1}\d\mu_T)$;
\item $(j_\var)$ converges weakly to $j(t,x)=\ds \int_{\R^3_\v}v h(t,x,v) \d v$ in $\left[L^2((0,T) \times
\R^3_\x)\right]^3.$
\end{enumerate}
where $\varrho$ and $j$ are related by \eqref{continu}.
\end{propo}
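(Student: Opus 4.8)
The plan is to assemble the weak-convergence facts already prepared in the discussion above into the four claimed statements; the only genuine inputs are the a priori bounds of Proposition~\ref{apriori} (and the remark following it) together with weak compactness. First I would observe that $L^2((0,T)\times\R^3_\x)$, $\big[L^2((0,T)\times\R^3_\x)\big]^3$ and $L^2(\O_T,\sigma\mathcal{M}^{-1}\d\mu_T)$ are Hilbert spaces, hence reflexive. Since Proposition~\ref{apriori} and the remark following it provide bounds for $(\varrho_\var)$ in $L^2((0,T)\times\R^3_\x)$, for $(j_\var)$ in $\big[L^2((0,T)\times\R^3_\x)\big]^3$, and for both $(f_\var)$ and $(h_\var)$ in $L^2(\O_T,\sigma\mathcal{M}^{-1}\d\mu_T)$ (the latter two through the boundedness of $\sqrt{\sigma}f_\var$ and $\sqrt{\sigma}h_\var$), the Banach--Alaoglu theorem allows me to extract a single subsequence (not relabelled) along which $\varrho_\var\rightharpoonup\varrho$, $j_\var\rightharpoonup j$, $f_\var\rightharpoonup f$ and $h_\var\rightharpoonup h$. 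This already delivers the weak-convergence content of (i), (ii) and (iv).

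To identify $f$ I would write $f_\var=\varrho_\var\mathcal{M}+\var h_\var$ and pair it with an arbitrary $\Psi$ in the dual space $L^2(\O_T,\sigma^{-1}\mathcal{M}\d\mu_T)$. The term $\var h_\var$ contributes nothing in the limit because $\sqrt{\sigma}h_\var$ is bounded, while the term $\varrho_\var\mathcal{M}$ reduces the pairing to that of $\varrho_\var$ against $\varphi(t,x)=\int_{\R^3_\v}\mathcal{M}(v)\Psi(t,x,v)\,\d v$. The elementary but essential point is that $\varphi\in L^2((0,T)\times\R^3_\x)$: by Cauchy--Schwarz $|\varphi|^2\le\big(\int_{\R^3_\v}\sigma\mathcal{M}\,\d v\big)\big(\int_{\R^3_\v}\sigma^{-1}\mathcal{M}\,|\Psi|^2\,\d v\big)$, and $\int_{\R^3_\v}\sigma\mathcal{M}\,\d v<\infty$ since $\sigma$ grows at most linearly against the Gaussian $\mathcal{M}$. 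Letting $\var\to 0$ then gives $f=\varrho\mathcal{M}$, which is statement (iii).

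The constraints distinguishing $h$ and $j$ come from feeding special test functions into the weak limit $h_\var\rightharpoonup h$. Choosing $\Psi$ independent of $v$ forces $\int_{\R^3_\v}h(t,x,v)\,\d v=0$; this is consistent with the exact identity $\int_{\R^3_\v}h_\var\,\d v=\var^{-1}\big(\varrho_\var-\varrho_\var\int_{\R^3_\v}\mathcal{M}\,\d v\big)=0$, valid because $\int_{\R^3_\v}\mathcal{M}\,\d v=1$, and completes (ii). Choosing instead $\Psi(t,x,v)=v\,\varphi(t,x)$ with $\varphi\in L^2((0,T)\times\R^3_\x)$---admissible since $\int_{\R^3_\v}\sigma^{-1}\mathcal{M}\,|v|^2\,\d v<\infty$ thanks to $\sigma\ge\nu_0>0$---together with the identity $j_\var=\int_{\R^3_\v}v\,h_\var\,\d v$ (which holds because $\int_{\R^3_\v}v\,\mathcal{M}\,\d v=0$ for $u_1=0$), yields $j=\int_{\R^3_\v}v\,h\,\d v$, which is (iv). Finally, integrating \eqref{boltrescaled} over $v$ gives the exact continuity relation \eqref{continuvar}; passing to the distributional limit using the weak convergences of $\varrho_\var$ and $j_\var$ produces \eqref{continu}.

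Since each piece is essentially bookkeeping, there is no single difficult step; the only place requiring care is verifying that the chosen test functions really belong to the dual space $L^2(\O_T,\sigma^{-1}\mathcal{M}\d\mu_T)$, i.e. the two weighted integrability conditions $\int_{\R^3_\v}\sigma\mathcal{M}\,\d v<\infty$ and $\int_{\R^3_\v}\sigma^{-1}\mathcal{M}\,|v|^2\,\d v<\infty$. This is precisely where the linear growth of $\sigma_\mathsf{hs}$ and its strictly positive lower bound $\nu_0$ intervene, and it is what makes the argument apply uniformly to both the Maxwell and the hard-spheres models.
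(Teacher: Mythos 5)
Your proposal is correct and follows essentially the same route as the paper: weak compactness from the bounds of Proposition~\ref{apriori} (and the remark after it), identification of the limits by testing against $\Psi\in L^2(\O_T,\sigma^{-1}\mathcal{M}\d\mu_T)$ with the decomposition $f_\var=\varrho_\var\mathcal{M}+\var h_\var$, and passage to the limit in the continuity equation. Your explicit verification that the chosen test functions lie in the dual space is exactly the point the paper treats with ``one easily has from Cauchy--Schwarz,'' so there is no substantive difference.
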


The problem of the diffusion approximation is then reduced to the
one of finding a suitable relation, similar to the\textit{ classical
Fick's law}, linking the current $j(t,x)$  to the gradient of the
density $\varrho(t,x)$. Such a Fick's law (and the corresponding
coefficient) shall depend heavily on the collision kernel.

\subsection{Maxwell molecules}\label{max}

When dealing with Maxwell molecules, {\em i.e.}, whenever
$\Q=\mathcal{L}_\mathsf{max}$, it is possible to obtain an explicit
expression for the diffusion coefficient. Precisely, multiplying
equation~\eqref{boltrescaled} by $v$ and integrating over $\R_\v^3$
gives
\begin{equation}\label{current} \var^2 \,
\partial_t j_\var(t,x)
     + \left( \int_{\R_\v^3} (v \otimes \v) : \nabla_\x f_\var(t,x,v)\d v \right)
     = \frac{1}{\var} \, \int_{\R_\v^3} \mathcal{L}_{\mathsf{max}} (f_\var) \, v  \d\v
     \end{equation}
Now, as we already saw it (see \eqref{integralv}): $$\int_{\R_\v^3}
\mathcal{L}_{\mathsf{max}} (f_\var) \,v\d\v =-\alpha (1-\beta) \var
j_\var=-\lambda_{0,1}\var j_\var.$$ Then, recalling that
$f_\var(t,x,v)=\varrho_\var(t,x)\mathcal{M}(v)+\var h_\var(t,x,v)$,
Eq. \eqref{current} becomes
\begin{equation}\label{current1}
\var^2 \,
\partial_t j_\var(t,x)
     + \mathbf{A} : \nabla_\x \varrho_\var(t,x) +
     \var \left( \int_{\R_\v^3} (v \otimes \v) : \nabla_\x h_\var(t,x,v)\d v \right)
     =- {\lambda}_{0,1}\: j_\var \end{equation}
     where $\mathbf{A}$ is the matrix of directional temperatures associated to the
  distribution $\mathcal{M}$:
$$ \mathbf{A}  = \int_{\R_\v^3} (v\otimes \v)\,
\mathcal{M}(\v) \, \d\v=\dfrac{\Theta^{\#}}{m}
\mathbf{Id}=\mathrm{diag}\left(\dfrac{\Theta^{\#}}{m}\,;\dfrac{\Theta^{\#}}{m}\,;\dfrac{\Theta^{\#}}{m}\right).$$
One may rewrite \eqref{current1} as
$$\dfrac{\Theta^{\#}}{m} \nabla_\x \varrho_\var(t,x) +
\lambda_{0,1} j_\var(t,x)=-\var^2 \,
\partial_t j_\var(t,x)- \var \left( \int_{\R_\v^3} (v \otimes \v) : \nabla_\x h_\var(t,x,v)\d v
\right).$$ Choosing a test-function $\psi \in
\mathscr{C}^\infty _c((0,T) \times \R^3_x)$, the above equation reads
in its distributional form:
\begin{multline*}
\dfrac{\Theta^{\#}}{m} \int_0^T \d t \int_{\R^3_\x}\nabla_\x
\psi(t,x) \varrho_\var(t,x) \d x - \lambda_{0,1} \int_0^T \d t
\int_{\R^3_\x} \psi(t,x)
j_\var(t,x)\d t=\\
-\var^2 \, \int_0^T \d t \int_{\R^3_\x}
\partial_t \psi(t,x) j_\var(t,x)\d x - \var \int_{\O_T}  h_\var(t,x,v)  (v \otimes \v)
:  \nabla_\x \psi(t,x) \d \mu_T
\end{multline*}
and, by virtue of the bounds in Prop.~\ref{apriori}, the right-hand side
converges to zero as $\var \to 0$ and  one gets at the limit:
\begin{equation}
j (t,x)=-\dfrac{\Theta^{\#}}{m\lambda_{0,1}} \nabla_\x
\varrho(t,x)\end{equation} in the distributional sense. \textit{The
above formula provides the so-called Fick's law for Maxwell's
molecules}. One deduces the following Theorem:
\begin{theo}\label{limdiff}
Let $f_0 \in \mathcal{H}$ and, for any $\var >0$, let
$f_\var(t,x,v)$ denotes the solution to \eqref{boltrescaled}. Then,
for any $T >0$, up to a sequence, $f_\var$ converges strongly in
$L^2_\mathrm{loc}(\O_T\,;\,\mathcal{M}^{-1}\d\mu_T)$ towards
$\varrho(t,x)\mathcal{M}(v)$,
where $\varrho(t,\x)$ is the solution to  the diffusion equation
    \begin{equation}\label{heat}
    \partial_t \varrho =\nabla_\x \cdot \left(\dfrac{\Theta^{\#}}{m\lambda_{0,1}}
\nabla_\x \varrho(t,x)\right),\qquad \varrho(t=0,x)=\ds
\int_{\R_v^3}f_0(\x,v)\d v.
    \end{equation}
 \end{theo}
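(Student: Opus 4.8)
The weak part of the statement is largely in hand already. Proposition~\ref{weakconv} supplies, along a subsequence, the limits $\varrho_\var \rightharpoonup \varrho$ and $j_\var \rightharpoonup j$ together with the continuity relation \eqref{continu}, while the computation carried out just above the theorem identifies the Fick's law $j = -\frac{\Theta^{\#}}{m\lambda_{0,1}}\nabla_\x\varrho$ in the distributional sense. Substituting this law into \eqref{continu} shows at once that $\varrho$ solves \eqref{heat} in $\mathscr{D}'((0,T)\times\R^3)$. The plan is then to (i) identify the initial datum, (ii) conclude that $\varrho$ is uniquely determined, so that the whole family converges, and (iii) upgrade the weak convergence of $f_\var$ to strong convergence in $L^2_{\mathrm{loc}}$; only step (iii) requires genuine work.

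For (i), I would use that by \eqref{continu} the time derivative $\partial_t\varrho_\var = -\mathrm{div}_\x j_\var$ is bounded in $L^2(0,T;H^{-1}(\R^3))$ by Proposition~\ref{apriori}(4), whereas $\varrho_\var$ is bounded in $L^\infty(0,T;L^2(\R^3))$ by Proposition~\ref{apriori}(3). Hence $(\varrho_\var)$ is equicontinuous in time with values in $H^{-1}_{\mathrm{loc}}$, each member satisfying $\varrho_\var(0,\cdot)=\varrho_0$ by construction; passing to the limit in the weak formulation, including the boundary contribution at $t=0$, yields $\varrho(0,\cdot)=\varrho_0$. Since the Cauchy problem \eqref{heat} has a unique solution for $\varrho_0\in L^2(\R^3)$, the limit $\varrho$ does not depend on the extracted subsequence, which settles (ii).

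For (iii) I write $f_\var=\varrho_\var\mathcal{M}+\var h_\var$ as in the text. Corollary~\ref{coercivity2}, through the a priori bound \eqref{estim}, controls $\sqrt{\sigma}\,h_\var$, hence $h_\var$, in $L^2(\O_T;\mathcal{M}^{-1}\d\mu_T)$, so that $\var h_\var\to 0$ strongly there. Moreover $\int_{\R^3}h_\var\,\d v=0$ by Proposition~\ref{weakconv}, so the cross term $\langle(\varrho_\var-\varrho)\mathcal{M},h_\var\rangle$ vanishes and, locally in $(t,\x)$, the quantity $\|f_\var-\varrho\mathcal{M}\|^2_{L^2(\mathcal{M}^{-1})}$ equals $\int(\varrho_\var-\varrho)^2\,\d\x\,\d t$ up to a term tending to $0$. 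The strong convergence thus reduces to $\varrho_\var\to\varrho$ in $L^2_{\mathrm{loc}}((0,T)\times\R^3)$. Since $(\varrho_\var)$ carries only an $L^2$ bound and no a priori spatial compactness, I would obtain this by compensated compactness, following \cite{DeGoPo,LiTo} and the div--curl lemma of Murat and Tartar \cite{Mu,Ta}: regarding $(t,\x)$ as space--time variables, the field $(\varrho_\var,j_\var)$ is divergence free by \eqref{continu}, while the auxiliary field $(\varrho_\var,\mathbf{0})$ has space--time curl equal, up to sign, to $\nabla_\x\varrho_\var$. The div--curl lemma then gives $\varrho_\var^2\rightharpoonup\varrho^2$ in $\mathscr{D}'$, and combined with $\varrho_\var\rightharpoonup\varrho$ in $L^2$ this forces the strong convergence $\varrho_\var\to\varrho$.

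The main obstacle is precisely the verification of the div--curl hypotheses, namely the compactness of $\nabla_\x\varrho_\var$ (the nonzero part of the curl of the auxiliary field) in $H^{-1}_{\mathrm{loc}}$; this does not follow from boundedness alone. I would derive it from the $L^2$-bound on $\varrho_\var$ together with the $H^{-1}$-bound on $\partial_t\varrho_\var$ coming from \eqref{continu}, via the Aubin--Lions--type compactness lemma underlying the compensated--compactness method. It is here that the coercivity estimate of Corollary~\ref{coercivity2} is essential, since it is what produces, through \eqref{estim}, the uniform $L^2$ controls on $\varrho_\var$ and $j_\var$ without which neither the divergence-free structure nor the curl compactness can be exploited. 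By contrast, the passage to the limit in the current equation \eqref{current1} that yields the Fick's law is routine, the $\var$-weighted remainders being harmlessly small in negative norms.
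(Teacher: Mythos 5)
Your overall architecture coincides with the paper's: reduce the strong convergence of $f_\var$ to that of $\varrho_\var$ in $L^2_{\mathrm{loc}}$ (using $\|f_\var-\varrho_\var\mathcal{M}\|_{\mathcal{H}}=\var\|h_\var\|_{\mathcal{H}}\to 0$), and obtain the latter from the div--curl lemma applied to $\mathbf{U}_\var=(j_\var,\varrho_\var)$ (divergence free by \eqref{continuvar}) and $\mathbf{V}_\var=(0,\varrho_\var)$. The weak identification of the limit equation and of the initial datum is fine.

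However, the step you yourself single out as ``the main obstacle'' --- the $H^{-1}_{\mathrm{loc}}$-compactness of $\nabla_\x\varrho_\var$, i.e.\ of $\mathrm{curl}\,\mathbf{V}_\var$ --- is where your argument breaks down. You propose to derive it from the $L^2$ bound on $\varrho_\var$ together with the $H^{-1}$ bound on $\partial_t\varrho_\var$ via an Aubin--Lions type lemma. This cannot work: those two bounds control oscillations in $t$ only, not in $\x$, and Aubin--Lions requires a \emph{compact} spatial embedding as input, which an $L^2$ bound alone does not provide. Indeed, deducing precompactness of $\nabla_\x\varrho_\var$ in $H^{-1}_{\mathrm{loc}}$ from information on $\varrho_\var$ and $\partial_t\varrho_\var$ alone is essentially equivalent to the strong $L^2_{\mathrm{loc}}$ compactness of $\varrho_\var$ that you are trying to prove, so the argument is circular. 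The missing ingredient is exactly the current equation \eqref{current1}, which you dismiss as serving only the Fick's law: rearranged, it gives
\begin{equation*}
\dfrac{\Theta^{\#}}{m}\,\nabla_\x \varrho_\var
= -\lambda_{0,1}\, j_\var
-\var^2\,\partial_t j_\var
-\var \,\mathrm{div}_\x\!\left(\int_{\R_\v^3} (v\otimes v)\, h_\var \,\d v\right),
\end{equation*}
whose right-hand side is an $L^2$-bounded family ($j_\var$) plus $\var$-weighted derivatives of bounded families, hence is bounded in $L^2_{\mathrm{loc}}$ modulo terms vanishing in $H^{-1}_{\mathrm{loc}}$. This, and not the time-derivative bound, is what makes $\mathrm{curl}\,\mathbf{V}_\var$ precompact in $H^{-1}_{\mathrm{loc}}$ and legitimizes the div--curl lemma; it is precisely the route taken in the paper's proof. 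Once this is corrected, the rest of your argument (passage $\varrho_\var^2\rightharpoonup\varrho^2$, hence strong convergence from weak convergence plus convergence of norms on compact sets) goes through.
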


\begin{proof} We already proved that $f_\var$ converges weakly to $\varrho
\mathcal{M}$ in $L^2((0,T)\,;\, \mathcal{H})$. To prove the strong
convergence, since
$$\int_0^T  \|f_\var(t)-\varrho_\var(t,x)
\mathcal{M}\|_{\mathcal{H}}^2=\var^2 \int_0^T
\|h_\var(t)\|^2_\mathcal{H} \to 0$$ it suffices to prove that
$\varrho_\var \mathcal{M} $ converges strongly to $\varrho
\mathcal{M}$ in $\mathcal{H}$. This is equivalent to prove that
$\varrho_\var $ converges strongly to $\varrho$ in $L^2(
0,T\,;\,L^2_\mathrm{loc}(\R^3_\x))$. This is done in the spirit of
\cite{LiTo} and \cite{DeGoPo} by using a compensated-compactness
argument. Precisely, let us define the following  vectors of $
\R^3_\x \times \R^+_t:$ $\mathbf{U}_\var=(j_\var, \varrho_\var)$ and
$ \mathbf{V}_\var=(0,\varrho_\var).$ From \eqref{continuvar}, one
sees that $\mathrm{div}_{x,t}\mathbf{U}_\var=0$, in particular
$(\mathrm{div}_{x,t}\mathbf{U}_\var)_\var$ is bounded in
$L^2(\R^3_\x \times \R^+_t)$. Now, from \eqref{current1}, one sees
that $\mathbf{A} : \nabla_x \varrho_\var$ is a bounded family in
$L^2((0,T) \times \R^3_x)$. Since $\mathbf{A}=\frac{\Theta^{\#}}{m}
\mathbf{Id}$, it is clear that
$$\mathrm{curl} \,\mathbf{V}_\var=\left(%
\begin{array}{cc}
  0 & -^T\nabla_\x \varrho_\var \\
  \nabla_x \varrho_\var & 0 \\
\end{array}%
\right)$$ is bounded in $[L^2_\mathrm{loc}((0,T) \times \R^3_x)]^{4
\times 4}$. Now, from the div-curl Lemma \cite{Mu,Ta},
$\mathbf{U}_\var \cdot \mathbf{V}_\var=\varrho^2_\var$ converges to
$\varrho^2$ in $\mathcal{D}'((0,T) \times \R^3_x)$.

Moreover, we already saw that $\varrho_\var$ is bounded in $L^\infty( 0,T
\,;\,L^2(\R^3_\x))$ from which we deduce the strong convergence of
$\varrho_\var$ to $\varrho$ in
$L^2((0,T)\,;\,L^2_\mathrm{loc}(\R^3_\x))$.\end{proof}

\begin{nb} \label{nb:FickMax}
As already pointed out in \cite{SpTo}, the dependence of the
diffusivity
$\mathsf{D}_\mathsf{max}:={\Theta^{\#}}/{m\lambda_{0,1}}$ on the
inelasticity parameter $\beta$ shows that inelasticity tends to slow
down the diffusive process.
\end{nb}

\subsection{Hard spheres} When dealing with hard-spheres
interactions, it appears difficult to obtain an explicit expression
of the diffusion coefficient. Nevertheless, its existence can be
deduced from Theorem \ref{coercivity}. Indeed,  a direct consequence
of the Fredholm Alternative is the  following:

\begin{propo}\label{chi} For any  $\null \ i=1,2,3$, the equation
$$\mathcal{L}_\mathsf{hs}(\chi_i)= v_i  \mathcal{M}(\v),\qquad \qquad v \in
\R^3$$
 has a unique solution $\chi_i \in
L^2(\sigma(\v)\mathcal{M}^{-1}(\v)\d\v)$, such that  $\langle \chi_i,
\mathcal{M} \rangle = \ds \int_{\R_\v^3}\chi_i(v)d v=0$ for any $\null \
i=1,2,3.$
\end{propo}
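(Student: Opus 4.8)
The plan is to apply the Fredholm alternative after reducing the unbounded operator $\mathcal{L}_\mathsf{hs}$ to a compact self-adjoint perturbation of the identity. Recall from \cite{ArLo} (as already used in the proof of Corollary~\ref{coercivity2}) that $\mathcal{L}_\mathsf{hs}$ splits as $\mathcal{L}_\mathsf{hs}f = \mathscr{K}f - \sigma_\mathsf{hs}f$, where $\mathscr{K}$ is compact on $L^2(\mathcal{M}^{-1})$; since both $\mathcal{L}_\mathsf{hs}$ and the multiplication by $\sigma_\mathsf{hs}$ are self-adjoint, so is $\mathscr{K}$. Moreover the collision frequency satisfies $\sigma_\mathsf{hs}\geq \nu_0>0$, so the multiplications $\sigma_\mathsf{hs}^{\pm1/2}$ are self-adjoint and $\sigma_\mathsf{hs}^{-1/2}$ is bounded on $L^2(\mathcal{M}^{-1})$.

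First I would symmetrize. Writing $g=v_i\mathcal{M}$ and seeking a solution of the form $\chi_i=\sigma_\mathsf{hs}^{-1/2}\psi$, the equation $\mathcal{L}_\mathsf{hs}\chi_i=g$ becomes, after multiplication by $\sigma_\mathsf{hs}^{-1/2}$,
\begin{equation*}
(\mathrm{Id}-A)\,\psi = -\,\sigma_\mathsf{hs}^{-1/2}\,g, \qquad A:=\sigma_\mathsf{hs}^{-1/2}\,\mathscr{K}\,\sigma_\mathsf{hs}^{-1/2}.
\end{equation*}
The operator $A$ is compact (composition of the bounded $\sigma_\mathsf{hs}^{-1/2}$ with the compact $\mathscr{K}$) and self-adjoint on $L^2(\mathcal{M}^{-1})$, so the Fredholm alternative applies to $\mathrm{Id}-A$: the equation is solvable if and only if its right-hand side is orthogonal to $\ker(\mathrm{Id}-A)$, and the solution is then unique up to that kernel.

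Next I would identify the kernel and verify the solvability condition. From $A\psi=\psi$ one gets $\mathscr{K}(\sigma_\mathsf{hs}^{-1/2}\psi)=\sigma_\mathsf{hs}\,(\sigma_\mathsf{hs}^{-1/2}\psi)$, that is $\sigma_\mathsf{hs}^{-1/2}\psi\in\ker\mathcal{L}_\mathsf{hs}$, hence $\ker(\mathrm{Id}-A)=\sigma_\mathsf{hs}^{1/2}\,\ker\mathcal{L}_\mathsf{hs}$. The coercivity estimate of Theorem~\ref{coercivity} forces $\ker\mathcal{L}_\mathsf{hs}=\mathrm{Span}(\mathcal{M})$: if $\mathcal{L}_\mathsf{hs}f=0$ then $0=-\langle\mathcal{L}_\mathsf{hs}f,f\rangle\geq C^\star\mu_\mathsf{max}\,\|f-\varrho_f\mathcal{M}\|_{L^2(\mathcal{M}^{-1})}^2$, so $f=\varrho_f\mathcal{M}$. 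Thus $\ker(\mathrm{Id}-A)=\mathrm{Span}(\sigma_\mathsf{hs}^{1/2}\mathcal{M})$, and the solvability condition reduces to
\begin{equation*}
\big\langle \sigma_\mathsf{hs}^{-1/2}g,\ \sigma_\mathsf{hs}^{1/2}\mathcal{M}\big\rangle_{L^2(\mathcal{M}^{-1})}=\int_{\R^3}g(\v)\,\d\v=\int_{\R^3}v_i\,\mathcal{M}(\v)\,\d\v=0,
\end{equation*}
which holds because $u_1=0$ makes $\mathcal{M}$ centered at the origin. This yields a solution $\psi$; setting $\chi_i=\sigma_\mathsf{hs}^{-1/2}\psi$ gives $\sqrt{\sigma_\mathsf{hs}}\,\chi_i=\psi\in L^2(\mathcal{M}^{-1})$, i.e. $\chi_i\in L^2(\sigma\mathcal{M}^{-1}\d\v)$, and the one-dimensional additive freedom in the direction $\mathcal{M}$ is removed by imposing $\langle\chi_i,\mathcal{M}\rangle=\int_{\R^3}\chi_i\,\d\v=0$, giving the stated uniqueness.

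The main obstacle is the unboundedness of $\mathcal{L}_\mathsf{hs}$, which blocks a direct appeal to the Fredholm alternative; the conjugation by $\sigma_\mathsf{hs}^{-1/2}$ is precisely what converts the problem into one for $\mathrm{Id}$ minus a compact self-adjoint operator. The remaining care lies in transferring the kernel description faithfully through this conjugation and in checking that the orthogonality condition collapses exactly to the vanishing of $\int v_i\mathcal{M}\,\d\v$. A variational alternative, which I would keep in reserve, is to apply Lax--Milgram directly on the space $\{h:\int_{\R^3} h\,\d\v=0\}$ equipped with the $L^2(\sigma\mathcal{M}^{-1})$ norm, using the coercivity of Corollary~\ref{coercivity2} together with the boundedness of the functional $h\mapsto\int_{\R^3}v_i\,h\,\d\v$ on that space.
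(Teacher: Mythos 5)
Your proof is correct and follows exactly the route the paper indicates: the paper gives no detailed argument, merely asserting that the proposition is ``a direct consequence of the Fredholm Alternative'' deduced from Theorem~\ref{coercivity}, and your conjugation by $\sigma_\mathsf{hs}^{-1/2}$ of the decomposition $\mathcal{L}_\mathsf{hs}=\mathscr{K}-\sigma_\mathsf{hs}$, identification of the kernel via the coercivity estimate, and verification of the orthogonality condition $\int v_i\mathcal{M}\,\d\v=0$ is a faithful, complete implementation of that sketch. The Lax--Milgram variant you keep in reserve (on the hyperplane of zero-mean functions with the $L^2(\sigma\mathcal{M}^{-1})$ norm, using Corollary~\ref{coercivity2}) would work equally well and is closer in spirit to the phrase ``deduced from Theorem~\ref{coercivity}''.
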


\begin{nb}
  Note that the above Proposition holds true only because we assumed
  the bulk velocity $u_1$ to be zero, {\em i.e.}, $\int_{\R^3_\v} \v \mathcal{M}\d\v=0$.
  If one deals with a non-zero
  bulk velocity $u_1$, then
  if one denotes 
  $a(v)=v-u_1$, $\chi_i$ then solves $\mathcal{L}_\mathsf{hs}(\chi_i)= a_i(v)
  \mathcal{M}(\v)$ (see also \eqref{a(v)}), and moreover the limit diffusion equation
  includes in this case an additional drift term $u_1 \cdot \nabla_x \rho$, see Eq.~\eqref{diff}.
\end{nb}

Then, setting $\chi=(\chi_1,\chi_2,\chi_2)$ one defines the
diffusion matrix:
$$\mathbf{D}:=-\int_{\R_\v^3} v \otimes \chi(\v) \d \v \in \R^{3
\times 3}.$$ Adapting the result of \cite{Po}, the diffusion matrix
is given by $\mathbf{D}=
\mathrm{diag}(\mathsf{D_\mathsf{hs},D_\mathsf{hs},D_\mathsf{hs}})$
for some {\it positive constant} $\mathsf{D}_\mathsf{hs} >0$,
namely,
$$\mathsf{D}_\mathsf{hs}=-\int_{\R_\v^3} v_1 \chi_1(\v)\d v=-\int_{\R_\v^3}
\mathcal{L}_\mathsf{hs}(\chi_1)\chi_1 \mathcal{M}^{-1} \d\v \geq \mu
\|\chi_1\|^2_{L^2(\mathcal{M}^{-1})}.$$

\begin{nb}
Note that, when dealing with Maxwell molecules, for any $i=1,2,3,$
the function $\chi_i$ appearing in Proposition \ref{chi} is given by
$\chi_i=- \frac{1}{\lambda_{0,1}} \v_i \mathcal{M} $ and we find
again the expression of the diffusion matrix $ \mathbf{D}=
\frac{\Theta^{\#}}{m\lambda_{0,1}}\mathbf{Id}.$
\end{nb}

%
Recall that, for any $T >0$, we defined $\O_T=(0,T) \times \R^3_\v
\times \R^3_x$ and $\d\mu_T=\d x \d v \d t.$ Then, for any $\phi \in
L^\infty(\R^3_\v)$ and any $\psi \in \mathscr{C}_c ^\infty((0,T)
\times \R^3_x)$, multiplying Eq. \eqref{boltrescaled} by
$\phi(v)\psi(t,x)$ and integrating over $\O_T$ one has
\begin{multline}\label{refe}
\int_{\O_T} \varrho_\var(t,x)\mathcal{M}(v) \left(v \cdot \nabla_x
\psi(t,x)\right) \phi(v)\d\mu_T + \int_{\O_T}
\mathcal{L}_\mathsf{hs}(h_\var) \phi(v)\psi(t,x)\d\mu_T=\\
\var\bigg(\int_{\O_T} \phi\,f_\var \partial_t \psi\, \d\mu_T +
\int_{\O_T} h_\var  (v \cdot \nabla_x \psi)\phi \d\mu_T\bigg).
\end{multline}
In particular, by virtue of Propositions \ref{apriori} and
\ref{weakconv}, one sees that
$$\lim_{\var \to 0} \bigg( \int_{\O_T} \mathcal{M}(v)  \varrho_\var(t,x) \left(v \cdot \nabla_x
\psi(t,x)\right) \phi(v)\d\mu_T + \int_{\O_T}
\mathcal{L}_\mathsf{hs}(h_\var) \phi(v)\psi(t,x)\d\mu_T \bigg)=0.$$
Now, one
deduces easily as in \cite{DeGoPo} that
\begin{equation}\label{a(v)}
\int_{\O_T} \varrho (t,x) \left(v \cdot \nabla_x
\psi(t,x)\right)\mathcal{M}(v) \phi(v)\d\mu_T=-\int_{\O_T}
\mathcal{L}_\mathsf{hs}(h ) \phi(v)\psi(t,x)\d\mu_T,
\end{equation}
which means that, in the distributional sense,
$$\mathrm{div}_\x
(v\mathcal{M}(v) \varrho(t,x))=\mathcal{L}_\mathsf{hs}(h), \qquad t
>0, x \in \R^3_x.$$
 Since $h$ is of zero $\R^3_v$--average, Proposition \ref{chi} asserts
that
$$h(t,x,v)=-\chi(v) \cdot \nabla_\x \varrho(t,x)$$
and Proposition \ref{weakconv} \textit{(iv)} leads to
$$j(t,x)=\int_{\R^3_\v} v h(t,x,v)\d v = \mathbf{D} : \nabla_\x
\varrho(t,x).$$
We then obtain the following:
\begin{theo} \label{FickHS} Let $0 \leq f_0(x,v) \in L^2(\R^3_\x \times
\R^3_v,\mathcal{M}^{-1}\d\v)$ be given and let $f_\var$ be the
associated sequence of solution to \eqref{boltrescaled} where
$\Q=\Q_\mathsf{hs}$. Then, up to a subsequence, $f_\var $ converges
strongly in $L^2_{\mathrm{loc}}(\O_T,\mathcal{M}^{-1}\d\mu_T)$ to
$\varrho(t,x)\mathcal{M}$ where $\varrho \geq0$ is the solution to
the parabolic  diffusion equation \eqref{diff} where the diffusion
coefficient $\mathsf{D}_\mathsf{hs}$ is given by
$$\mathsf{D}_\mathsf{hs}:=-\int_{\R_\v^3} \v_1 \chi_1(\v) \d \v \in \R^{3
\times 3}$$ with $ \chi_1 $   defined in Prop. \ref{chi}.
\end{theo}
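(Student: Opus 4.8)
The weak-convergence part of the statement, together with the identification of the limiting current, is already established in the discussion preceding the theorem: Proposition~\ref{weakconv} gives the weak limits $\varrho_\var\rightharpoonup\varrho$, $h_\var\rightharpoonup h$, $j_\var\rightharpoonup j$, while \eqref{a(v)} combined with the Fredholm solvability of Proposition~\ref{chi} yields $h=\chi\cdot\nabla_\x\varrho$ and hence the Fick's law relating $j$ to $\nabla_\x\varrho$. Inserting this law into the limiting continuity equation \eqref{continu} produces exactly the parabolic equation \eqref{diff} with diffusion coefficient $\mathsf{D}_\mathsf{hs}$; since $\mathbf{D}=\mathrm{diag}(\mathsf{D}_\mathsf{hs},\mathsf{D}_\mathsf{hs},\mathsf{D}_\mathsf{hs})$ with $\mathsf{D}_\mathsf{hs}>0$, this is a nondegenerate heat equation. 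The non-negativity $\varrho\geq0$ follows by passing to the limit in $\varrho_\var=\int_{\R^3_\v}f_\var\,\d\v\geq0$, using that \eqref{boltrescaled} propagates $f_\var\geq0$. It thus remains only to upgrade the weak convergence of $f_\var$ to strong convergence in $L^2_\mathrm{loc}(\O_T;\mathcal{M}^{-1}\d\mu_T)$.

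To this end I would argue exactly as in the proof of Theorem~\ref{limdiff}, via a compensated-compactness (div-curl) argument in the spirit of \cite{LiTo,DeGoPo}. Since Proposition~\ref{apriori} gives $\int_0^T\|f_\var-\varrho_\var\mathcal{M}\|_{\mathcal{H}}^2\,\d t=\var^2\int_0^T\|h_\var\|_{\mathcal{H}}^2\,\d t\to0$, it suffices to prove that $\varrho_\var\to\varrho$ strongly in $L^2(0,T;L^2_\mathrm{loc}(\R^3_\x))$. I would introduce the space-time fields $\mathbf{U}_\var=(j_\var,\varrho_\var)$ and $\mathbf{V}_\var=(0,\varrho_\var)$, both bounded in $L^2_\mathrm{loc}$ by Proposition~\ref{apriori}. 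The continuity equation \eqref{continuvar} gives $\mathrm{div}_{t,\x}\mathbf{U}_\var=0$, so once $\mathrm{curl}_{t,\x}\mathbf{V}_\var$ (which involves exactly $\nabla_\x\varrho_\var$) is shown to be relatively compact in $H^{-1}_\mathrm{loc}$, the div-curl lemma of \cite{Mu,Ta} yields $\varrho_\var^2=\mathbf{U}_\var\cdot\mathbf{V}_\var\rightharpoonup\varrho^2$ in $\mathcal{D}'$, which combined with the weak $L^2$ convergence of $\varrho_\var$ gives the desired strong convergence.

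The only point where the hard-spheres case genuinely differs from the Maxwell case is the control of $\nabla_\x\varrho_\var$, because here $\int_{\R^3_\v}\v\,\mathcal{L}_\mathsf{hs}(f_\var)\,\d\v$ is not proportional to $j_\var$. The natural substitute for the moment equation \eqref{current1} is obtained by testing \eqref{boltrescaled} against $\chi_i\mathcal{M}^{-1}$: using that $\mathcal{L}_\mathsf{hs}$ is self-adjoint in $L^2(\mathcal{M}^{-1})$ and that $\mathcal{L}_\mathsf{hs}(\chi_i)=\v_i\mathcal{M}$ (Proposition~\ref{chi}), the collision term collapses to $\tfrac1\var\langle f_\var,\v_i\rangle=j_{\var,i}$. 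Writing $f_\var=\varrho_\var\mathcal{M}+\var h_\var$ and using $\int_{\R^3_\v}\chi_i\,\d\v=0$ together with $\int_{\R^3_\v}\v_k\chi_i\,\d\v=-\mathbf{D}_{ki}$, the time term is of order $\var^2$ and the transport term produces $-(\mathbf{D}\nabla_\x\varrho_\var)_i$, whence the moment identity
\[
(\mathbf{D}\nabla_\x\varrho_\var)_i=-j_{\var,i}+\var^2\partial_t\!\int_{\R^3_\v}h_\var\chi_i\mathcal{M}^{-1}\,\d\v+\var\,\mathrm{div}_\x\!\int_{\R^3_\v}\v\,h_\var\chi_i\mathcal{M}^{-1}\,\d\v .
\]
Since $\mathbf{D}$ is invertible and $j_\var$ is bounded in $L^2$, the leading term $-\mathbf{D}^{-1}j_\var$ is relatively compact in $H^{-1}_\mathrm{loc}$ by Rellich's theorem, while the two $\var$-remainders tend to $0$ in $H^{-1}$; hence $\nabla_\x\varrho_\var$ is relatively compact in $H^{-1}_\mathrm{loc}$, as needed, and this also re-derives the Fick's law in the limit.

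The main obstacle I anticipate is precisely the control of these remainder terms in the unbounded-collision-frequency setting. Because $\sigma_\mathsf{hs}\sim(1+|\v|)$, proving that $\int_{\R^3_\v}\v\,h_\var\chi_i\mathcal{M}^{-1}\,\d\v$ is bounded in $L^2((0,T)\times\R^3_\x)$ (so that the last remainder is genuinely $O(\var)$ in $H^{-1}$) requires the weighted estimate of Corollary~\ref{coercivity2}, equivalently point~(2) of Proposition~\ref{apriori}. The idea is to split $\v=(\v\,\sigma^{-1/2})\,\sigma^{1/2}$ and use that $\v\,\sigma^{-1/2}\chi_i\in L^2(\mathcal{M}^{-1})$ (since $|\v|^2\sigma^{-1}\lesssim\sigma$ and $\chi_i\in L^2(\sigma\mathcal{M}^{-1})$) together with the boundedness of $\sqrt{\sigma}\,h_\var$ in $L^2(\O_T;\mathcal{M}^{-1}\d\mu_T)$, so that the Cauchy–Schwarz estimate closes despite the linear growth of $\v$. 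Once this integrability is secured the remainders are negligible and the div-curl argument applies verbatim, completing the proof; uniqueness for the nondegenerate parabolic equation \eqref{diff} moreover shows that the whole family, not merely a subsequence, converges.
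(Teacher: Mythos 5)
Your proposal is correct and reaches the theorem by the same overall strategy as the paper (weak limits from Proposition \ref{weakconv}, Fick's law via \eqref{a(v)} and Proposition \ref{chi}, div-curl lemma for the strong convergence of $\varrho_\var$), but the key technical step --- the $H^{-1}_{\mathrm{loc}}$-compactness of $\nabla_\x \varrho_\var$ --- is obtained through a genuinely different moment identity. The paper tests \eqref{boltrescaled} against the \emph{bounded} function $\phi(\v)=\v/|\v|$, which produces $\Gamma : \nabla_\x\varrho_\var$ bounded in $L^2_{\mathrm{loc}}$ with $\Gamma=\int_{\R^3_\v}\frac{\v\otimes\v}{|\v|}\mathcal{M}(\v)\,\d\v$ invertible; the boundedness of $\phi$ is what tames the linear growth of $\sigma_{\mathsf{hs}}$ in the transport term, at the price of having to estimate the collision term $\int_{\R^3_\v}\mathcal{L}_{\mathsf{hs}}(h_\var)\frac{\v}{|\v|}\,\d\v$ separately. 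You instead test against $\chi_i\mathcal{M}^{-1}$, so that self-adjointness collapses the collision term to $j_{\var,i}$, and the resulting identity $\mathbf{D}\nabla_\x\varrho_\var=-j_\var+O(\var)$ simultaneously yields the compactness (via the $L^2$ bound on $j_\var$, Rellich, and the invertibility of $\mathbf{D}$) and re-derives the Fick's law; this is the classical auxiliary-function device of \cite{DeGoPo,Po}. The price is that $\chi_i$ is unbounded, so the remainder $\int_{\R^3_\v}\v\, h_\var\chi_i\mathcal{M}^{-1}\,\d\v$ genuinely requires the weighted bound of Proposition \ref{apriori}\,(2) combined with $|\v|^2\sigma_{\mathsf{hs}}^{-1}\lesssim \sigma_{\mathsf{hs}}$ and $\chi_i\in L^2(\sigma\mathcal{M}^{-1})$ --- you identified this as the delicate point and your Cauchy--Schwarz splitting does close it. Both routes are valid; yours buys a cleaner collision term and a second derivation of the Fick's law, the paper's buys a compactness argument that never invokes $\chi$. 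The only points worth tightening are the justification of the pairing $\langle \mathcal{L}_{\mathsf{hs}}(f_\var),\chi_i\rangle=\langle f_\var,\mathcal{L}_{\mathsf{hs}}(\chi_i)\rangle$ (legitimate since $\chi_i$ lies in the form domain by Proposition \ref{chi}) and, if you want to upgrade from a subsequence to the whole family via uniqueness for \eqref{diff}, the identification of the initial datum in the limit.
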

\begin{proof} We already proved that $f_\var$ converges weakly to $\varrho
\mathcal{M}$ in $L^2((0,T), \mathcal{H})$ and the strategy to prove
the strong convergence is that used in Theorem \ref{limdiff}.
Precisely, we define again $\mathbf{U}_\var=(j_\var, \varrho_\var)$
and $ \mathbf{V}_\var=(0,\varrho_\var) $ and observes that again
$(\mathrm{div}_{x,t}\mathbf{U}_\var)_\var$ is bounded in
$L^2(\R^3_\x \times \R^+_t)$. Now, from \eqref{refe}, with
$\phi(v)=\dfrac{v}{|\v|}$ and setting $\Gamma=\ds \int_{\R^3_v}
\dfrac{v \otimes v}{|\v|} \mathcal{M}(v)\d v,$ one sees that
$\varrho_\var$ satisfies:
$$\Gamma : \nabla_\x \varrho_\var= \int_{\R^3_v}
 \mathcal{L}_\mathsf{hs}(h_\var)\frac{ v}{|v|}\d\v
- \var\bigg(\partial_t \int_{\R^3_\v} \frac{\v}{|\v|} \,f_\var \d \v
+ \mathrm{div}_x \left[\int_{\R^3_\v} \dfrac{v \otimes v}{|\v|}
h_\var \d\v\right]\bigg)$$ so that $ {\Gamma}: \nabla_\x
\varrho_\var$ lies in a bounded subset of $L^2_\mathrm{loc}((0,T)
\times \R^3_\x)$. Since $\Gamma$ is invertible, one proceeds as in
the proof of Theorem \ref{limdiff} that $\varrho_\var$ converges
strongly to $\varrho$ in $L^2_\mathrm{loc}((0,T) \times
\R^3_\x)$.\end{proof}

As we saw it, the diffusivity $\mathsf{D}_\mathsf{hs}$ associated to
hard-spheres interactions is not explicitly computable, the solution
$\chi$ not being explicit. It is however possible to   obtain a
quantitative estimate of $\mathsf{D}_\mathsf{hs}$ in terms of known
quantities ({\it i.e.}, that do not involve $\chi_1$):
\begin{propo} \label{FickHSestim} One has the following estimate:
$$\dfrac{\Theta^{\#}}{c_\mathsf{hs} m}  \leq \mathsf{D}_\mathsf{hs} \leq
\dfrac{\Theta^{\#}}{\lambda_{0,1}C^\star m}$$ where $C^\star$ is the
constant provided by Prop. \ref{linkHM} and
 $c_\mathsf{hs}=-\dfrac{\langle
\mathcal{L}_\mathsf{hs}(v_1
\mathcal{M}),v_1\mathcal{M}\rangle}{\|v_1
\mathcal{M}\|^2_{L^2(\mathcal{M}^{-1})}}
>0.$
\end{propo}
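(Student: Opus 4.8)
The plan is to reduce everything to the single scalar identity
\[ \mathsf{D}_\mathsf{hs}=-\int_{\R^3_v}v_1\chi_1\,\d v=-\langle\mathcal{L}_\mathsf{hs}(\chi_1),\chi_1\rangle=\langle v_1\mathcal{M},\chi_1\rangle=\mathcal{D}_\mathsf{hs}(\chi_1), \]
where $\chi_1$ is the solution of $\mathcal{L}_\mathsf{hs}(\chi_1)=v_1\mathcal{M}$ provided by Proposition \ref{chi}, and then to squeeze $\mathsf{D}_\mathsf{hs}$ from below and above by two Cauchy--Schwarz inequalities, run respectively in the nonnegative quadratic form of $\mathcal{L}_\mathsf{hs}$ and in that of $\mathcal{L}_\mathsf{max}$. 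Throughout I would use that $v_1\mathcal{M}\in L^2(\sigma\mathcal{M}^{-1})$ (since $\sigma_\mathsf{hs}\sim(1+|v|)$), that $v_1\mathcal{M}$ is orthogonal to $\mathcal{M}$ because $u_1=0$, and that $\|v_1\mathcal{M}\|_{L^2(\mathcal{M}^{-1})}^2=\int_{\R^3_v}v_1^2\mathcal{M}\,\d v=\Theta^{\#}/m$.

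For the lower bound I would apply the Cauchy--Schwarz inequality to the nonnegative symmetric bilinear form $(g,h)\mapsto-\langle\mathcal{L}_\mathsf{hs}g,h\rangle$, whose nonnegativity is Proposition \ref{DH}, using the pair $(\chi_1,v_1\mathcal{M})$ together with $\mathcal{L}_\mathsf{hs}(\chi_1)=v_1\mathcal{M}$:
\[ \|v_1\mathcal{M}\|_{L^2(\mathcal{M}^{-1})}^4=\langle v_1\mathcal{M},v_1\mathcal{M}\rangle^2\leq\mathsf{D}_\mathsf{hs}\cdot\big(-\langle\mathcal{L}_\mathsf{hs}(v_1\mathcal{M}),v_1\mathcal{M}\rangle\big)=\mathsf{D}_\mathsf{hs}\,c_\mathsf{hs}\,\|v_1\mathcal{M}\|_{L^2(\mathcal{M}^{-1})}^2. \]
Dividing gives $\mathsf{D}_\mathsf{hs}\geq\|v_1\mathcal{M}\|_{L^2(\mathcal{M}^{-1})}^2/c_\mathsf{hs}=\Theta^{\#}/(c_\mathsf{hs}m)$, which is the claimed lower bound; here $c_\mathsf{hs}>0$ since Theorem \ref{coercivity} (applied to $v_1\mathcal{M}$, of zero mean) gives $c_\mathsf{hs}=\mathcal{D}_\mathsf{hs}(v_1\mathcal{M})/\|v_1\mathcal{M}\|_{L^2(\mathcal{M}^{-1})}^2\geq\mu>0$.

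For the upper bound I would exploit that $v_1\mathcal{M}$ is precisely the momentum eigenvector of $\mathcal{L}_\mathsf{max}$, i.e. $\mathcal{L}_\mathsf{max}(v_1\mathcal{M})=-\lambda_{0,1}v_1\mathcal{M}$, so that with $B=-\mathcal{L}_\mathsf{max}$ one has $v_1\mathcal{M}=\lambda_{0,1}^{-1}B(v_1\mathcal{M})$. Applying Cauchy--Schwarz to the nonnegative form $\mathcal{D}_\mathsf{max}(\cdot)=\langle B\,\cdot,\cdot\rangle$ on the pair $(v_1\mathcal{M},\chi_1)$,
\[ \mathsf{D}_\mathsf{hs}=\langle v_1\mathcal{M},\chi_1\rangle=\lambda_{0,1}^{-1}\langle B(v_1\mathcal{M}),\chi_1\rangle\leq\lambda_{0,1}^{-1}\,\langle B(v_1\mathcal{M}),v_1\mathcal{M}\rangle^{1/2}\,\langle B\chi_1,\chi_1\rangle^{1/2}. \]
Now $\langle B(v_1\mathcal{M}),v_1\mathcal{M}\rangle=\lambda_{0,1}\|v_1\mathcal{M}\|_{L^2(\mathcal{M}^{-1})}^2$, while Proposition \ref{linkHM} yields $\langle B\chi_1,\chi_1\rangle=\mathcal{D}_\mathsf{max}(\chi_1)\leq(C^\star)^{-1}\mathcal{D}_\mathsf{hs}(\chi_1)=(C^\star)^{-1}\mathsf{D}_\mathsf{hs}$. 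Substituting leaves $\mathsf{D}_\mathsf{hs}\leq\|v_1\mathcal{M}\|_{L^2(\mathcal{M}^{-1})}(\lambda_{0,1}C^\star)^{-1/2}\mathsf{D}_\mathsf{hs}^{1/2}$, and dividing by $\mathsf{D}_\mathsf{hs}^{1/2}$ (positive by the lower bound) gives $\mathsf{D}_\mathsf{hs}\leq\|v_1\mathcal{M}\|_{L^2(\mathcal{M}^{-1})}^2/(\lambda_{0,1}C^\star)=\Theta^{\#}/(\lambda_{0,1}C^\star m)$.

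The main obstacle is not any new estimate but the functional-analytic bookkeeping needed to justify the two Cauchy--Schwarz steps in the presence of the unbounded operator $\mathcal{L}_\mathsf{hs}$: one must confirm that $\chi_1$ and $v_1\mathcal{M}$ both lie in the relevant form domains and that all pairings are finite. This is exactly what the earlier results furnish, since Proposition \ref{chi} places $\chi_1$ in $L^2(\sigma\mathcal{M}^{-1})$, the weight $\sigma_\mathsf{hs}\sim(1+|v|)$ makes $v_1\mathcal{M}$ lie there as well, and all operators are self-adjoint and (by Theorem \ref{coercivity}) strictly coercive on $\mathrm{Span}(\mathcal{M})^\bot$, so the two nonnegative forms are genuine inner products on which Cauchy--Schwarz legitimately applies.
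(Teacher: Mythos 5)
Your proof is correct and is essentially the paper's own argument: the paper's device of expanding the nonpositive quadratic polynomials $P(s)=\langle\mathcal{L}_\mathsf{hs}(\chi_1+sv_1\mathcal{M}),\chi_1+sv_1\mathcal{M}\rangle$ and $Q(s)=\langle\mathcal{L}_\mathsf{max}(s\chi_1+v_1\mathcal{M}),s\chi_1+v_1\mathcal{M}\rangle$ and optimizing in $s$ is precisely the Cauchy--Schwarz inequality for the nonnegative forms $-\langle\mathcal{L}_\mathsf{hs}\,\cdot,\cdot\rangle$ and $-\langle\mathcal{L}_\mathsf{max}\,\cdot,\cdot\rangle$ applied to the same pair $(\chi_1,v_1\mathcal{M})$, combined with the same use of Proposition \ref{linkHM} and of the eigenvalue relation for $v_1\mathcal{M}$. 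The only blemish is the sign slip $\mathsf{D}_\mathsf{hs}=\langle v_1\mathcal{M},\chi_1\rangle$ (it should be $-\langle v_1\mathcal{M},\chi_1\rangle$), which is harmless since this quantity only enters your two estimates through its square or its absolute value.
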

\begin{proof} We begin with the lower bound of $\mathsf{D}_\mathsf{hs}$.
For any $s \in \R$, let
$$P(s)=\langle
\mathcal{L}_\mathsf{hs}(\chi_1+sv_1\mathcal{M}),\chi_1+sv_1\mathcal{M}\rangle.$$
Since $\mathcal{L}_\mathsf{hs}$ is negative, one has $P(s) \leq 0$
for any $s \in \R$. Moreover,
\begin{equation*}\begin{split}
P(s)&=\langle \mathcal{L}_\mathsf{hs}( \chi_1),\chi_1 \rangle + 2s
\langle \mathcal{L}_\mathsf{hs}(\chi_1),v_1\mathcal{M}\rangle + s^2
\langle \mathcal{L}_\mathsf{hs}(v_1
\mathcal{M}),v_1\mathcal{M}\rangle\\
&=-\mathsf{D}_\mathsf{hs} +2s \|v_1
\mathcal{M}\|^2_{L^2(\mathcal{M}^{-1})} + s^2\langle
\mathcal{L}_\mathsf{hs}(v_1 \mathcal{M}),v_1\mathcal{M}\rangle.
 \end{split}\end{equation*}
We get therefore that
$$\mathsf{D}_\mathsf{hs} \geq 2s\|v_1
\mathcal{M}\|^2_{L^2(\mathcal{M}^{-1})} + s^2 \langle
\mathcal{L}_\mathsf{hs}(v_1 \mathcal{M}),v_1\mathcal{M}\rangle,
\qquad \forall s \in \R.$$ With the definition of $c_\mathsf{hs}$
(note that $c_\mathsf{hs} >0$ since $\mathcal{L}_\mathsf{hs}$ is
negative and $v_1\mathcal{M} \bot \mathcal{M}$), we get
$$\mathsf{D}_\mathsf{hs} \geq \left(2s-c_\mathsf{hs}s^2\right)\|v_1
\mathcal{M}\|^2_{L^2(\mathcal{M}^{-1})}, \qquad \forall s \in \R.$$
Optimizing with respect to $s$, one sees that
$$\mathsf{D}_\mathsf{hs} \geq \dfrac{1}{c_\mathsf{hs}}\|v_1
\mathcal{M}\|^2_{L^2(\mathcal{M}^{-1})}=
\dfrac{\Theta^{\#}}{c_\mathsf{hs}m}.$$ To get an upper bound for
$\mathsf{D}_\mathsf{hs}$, we use the fact that, thanks to
\eqref{integralv},
$$\mathsf{D}_\mathsf{hs}=-\langle \chi_1, v_1 \mathcal{M} \rangle
=\lambda_{0,1}^{-1} \langle \mathcal{L}_\mathsf{max}(\chi_1),v_1
\mathcal{M}\rangle.$$ Now, as above, for any $s \in \R$, define
$Q(s)=\langle \mathcal{L}_\mathsf{max}(s\chi_1 +v_1\mathcal{M}),s
\chi_1 + v_1 \mathcal{M}\rangle$. Here again, $Q(s) \leq 0$ for any
$s \in \R$ and
\begin{equation*}\begin{split}Q(s)&=s^2 \langle \mathcal{L}_\mathsf{max}(\chi_1),\chi_1\rangle +
2s \langle \mathcal{L}_\mathsf{max}(\chi_1),v_1 \mathcal{M}
\rangle+\langle \mathcal{L}_\mathsf{max}(v_1 \mathcal{M}),v_1
\mathcal{M}\rangle\\
&=s^2 \langle \mathcal{L}_\mathsf{max}(\chi_1),\chi_1 \rangle
+2\lambda_{0,1}\mathsf{D}_\mathsf{hs} s
-\lambda_{0,1}\|v_1\mathcal{M}\|^2_{L^2(\mathcal{M}^{-1})}.\end{split}\end{equation*}
Now, according to Prop. \ref{linkHM}, $\langle
\mathcal{L}_\mathsf{max}(\chi_1),\chi_1 \rangle \geq
\frac{1}{C^\star} \langle
\mathcal{L}_\mathsf{hs}(\chi_1),\chi_1\rangle=-\mathsf{D}_\mathsf{hs}/{C^\star}$
so that
$$ 0 \geq Q(s) \geq -\frac{\mathsf{D}_\mathsf{hs}}{C^\star}s^2+2\lambda_{0,1}\mathsf{D}_\mathsf{hs} s
-\lambda_{0,1}\|v_1\mathcal{M}\|^2_{L^2(\mathcal{M}^{-1})}, \qquad
\forall s \in \R.$$ Optimizing the right-hand side with respect to
$s \in \R$, we get
$$\lambda_{0,1}\|v_1\mathcal{M}\|^2_{L^2(\mathcal{M}^{-1})} \geq
\lambda_{0,1}^2 C^\star \mathsf{D}_\mathsf{hs}$$ which gives the
desired upper bound.
\end{proof}
\begin{nb} It is possible to provide some upper bound for
$c_\mathsf{hs}$. Namely, using the fact that there exists $\nu_1
>0$ such that $\sigma(v) \leq \nu_1 (1+|\v|)$, it is easy to see that

$$c_\mathsf{hs} \leq
\frac{1}{\|v_1\mathcal{M}\|^2_{L^2(\mathcal{M}^{-1})}} \int_{\R^3}
\sigma(v)^2 v_1^2\mathcal{M}(v) \d\v \leq
\frac{m\nu_1}{\Theta^{\#}}\int_{\R^3}(1+|v|)^4 \mathcal{M}(v)\d\v.$$
This very rough estimate could certainly be strengthened. Note also
that the upper bound for $\mathsf{D}_\mathsf{hs}$ reads as
$$\mathsf{D}_\mathsf{hs} \leq
\dfrac{\tau(1-\a)(1-\b)}{\a(1-\b)(1-\a(1-\b))}\dfrac{\sqrt{m_1\Theta_1}}{m}$$
where $\tau= \dfrac{\sqrt{5 }}{ \mathrm{erf}^{-1}(1/2)\sqrt{2}}
\simeq 3.3154$ is a numerical constant and we used the lower bound
of $C^\star$ provided by Remark \ref{nb:lower}.
\end{nb}

\section*{Appendix A}
\setcounter{equation}{0}\renewcommand{\theequation}{A.\arabic{equation}}

We provide here a constructive proof of the coefficient $\varrho_1$
appearing in Proposition \ref{linkHM} with the aim of finding
quantitative estimates for the coefficient $C^\star$ in Prop.
\ref{linkHM}. Namely, recalling that $\kappa=\a(1-\b)$, one has
\begin{lemmeA}\label{lem:append} Given $\varrho_0 >0$, there exists $ \varrho_1>0$ such
that
\begin{equation}\label{estimate1}
\int_{\{|\bar{w}-\bar{v}| \le \varrho_1\}}
\frac{\mathcal{M}_1(\bar{w})}{\left(|\bar{v}-\bar{w}|^2+
\xi^2\right)^{1/2}} \, \d \bar{w}
    \le \frac12 \,  \int_{\R^2} \frac{\mathcal{M}_1(\bar{w})}{\left(|\bar{v}-\bar{w}|^2+ \xi^2\right)^{1/2}} \, \d \bar{w}.
\end{equation}
  for  any $\bar{v} \in \R^2, \xi \in [0,\varrho_0/2\kappa]$.
  Moreover, setting
  $\eta=\sqrt{\frac{2\Theta_1}{m_1} }\mathrm{erf}^{-1}(\frac{1}{2}),$ where
  $\mathrm{erf}^{-1}$ is the inverse error function, one has
  $$\varrho_1 \geq
  \left(\eta^2-\frac{\varrho_0^2}{4\kappa^2}\right)^{1/2}, \qquad \forall 0 < \varrho_0 < 2\kappa \eta.$$
\end{lemmeA}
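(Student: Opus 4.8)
The plan is to reduce the claim to the radially symmetric case $\bar v=0$, and there to an explicit error-function computation. Throughout I write $a=m_1/(2\Theta_1)$, so that $\mathcal{M}_1(\bar w)=C\,e^{-a|\bar w|^2}$ with $C=(m_1/2\pi\Theta_1)^{3/2}$; the constant $C$ cancels between the two sides of \eqref{estimate1} and plays no role. After the translation $\bar w=\bar v+\bar z$ the claim reads
\[
\int_{\{|\bar z|\le\varrho_1\}}\frac{\mathcal{M}_1(\bar v+\bar z)}{(|\bar z|^2+\xi^2)^{1/2}}\,\d\bar z\ \le\ \frac12\int_{\R^2}\frac{\mathcal{M}_1(\bar v+\bar z)}{(|\bar z|^2+\xi^2)^{1/2}}\,\d\bar z ,
\]
i.e., writing ``inside'' and ``outside'' for the integrals over $\{|\bar z|\le\varrho_1\}$ and $\{|\bar z|>\varrho_1\}$, that the inside integral is bounded by the outside one.

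First I would pass to polar coordinates $\bar z=r\omega$, $\omega\in\mathbb{S}^{1}$, and average the Gaussian over each circle. Setting $V=|\bar v|$, a direct computation gives the spherical average
\[
\frac1{2\pi}\int_{\mathbb{S}^{1}}\mathcal{M}_1(\bar v+r\omega)\,\d\omega = C\,e^{-aV^2}\,e^{-ar^2}\,I_0(2aVr),
\]
where $I_0$ is the modified Bessel function of order $0$. Introducing the finite positive measure $\d\nu(r)=\dfrac{r\,e^{-ar^2}}{(r^2+\xi^2)^{1/2}}\,\d r$ on $[0,\infty)$ and the weight $h(r)=I_0(2aVr)$, both the inside and the total integral factor as $2\pi C e^{-aV^2}$ times $\int_0^{\varrho_1}h\,\d\nu$ and $\int_0^{\infty}h\,\d\nu$ respectively, so the claim becomes $\int_0^{\varrho_1}h\,\d\nu\le\frac12\int_0^{\infty}h\,\d\nu$. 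The decisive structural fact is that $h$ is \emph{non-decreasing} on $[0,\infty)$ (since $I_0$ is increasing and $V\ge 0$). Hence, provided one knows the $V=0$ (equivalently $\bar v=0$) inequality $\int_0^{\varrho_1}\d\nu\le\int_{\varrho_1}^{\infty}\d\nu$, one gets
\[
\int_0^{\varrho_1}h\,\d\nu\ \le\ h(\varrho_1)\int_0^{\varrho_1}\d\nu\ \le\ h(\varrho_1)\int_{\varrho_1}^{\infty}\d\nu\ \le\ \int_{\varrho_1}^{\infty}h\,\d\nu,
\]
which is exactly inside $\le$ outside for arbitrary $\bar v$. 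Thus $\bar v=0$ is the worst case, and this monotone-rearrangement step is, to my mind, the real content of the lemma: the hard part is recognizing that the angular average produces a radially \emph{increasing} correction factor, which forces the extra mass toward the outside.

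It remains to treat $\bar v=0$ and to extract the quantitative bound on $\varrho_1$. Here $h\equiv1$ and, with the substitutions $u=r^2+\xi^2$ and then $u=s^2$, one finds $\int_0^{\varrho_1}\d\nu=e^{a\xi^2}\int_{\xi}^{\sqrt{\varrho_1^2+\xi^2}}e^{-as^2}\,\d s$ and $\int_0^{\infty}\d\nu=e^{a\xi^2}\int_{\xi}^{\infty}e^{-as^2}\,\d s$. Writing $R=\sqrt{\varrho_1^2+\xi^2}$ and using the error function, the required inside $\le$ outside inequality is equivalent to $2\,\mathrm{erf}(\sqrt a\,R)\le 1+\mathrm{erf}(\sqrt a\,\xi)$, for which it suffices that $\mathrm{erf}(\sqrt a\,R)\le \tfrac12$, i.e. $R\le\eta$ where $\sqrt a\,\eta=\mathrm{erf}^{-1}(\tfrac12)$ and $\sqrt a=\sqrt{m_1/2\Theta_1}$, matching $\eta=\sqrt{2\Theta_1/m_1}\,\mathrm{erf}^{-1}(\tfrac12)$. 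Since $\xi\in[0,\varrho_0/2\kappa]$, the condition $\sqrt{\varrho_1^2+\xi^2}\le\eta$ holds uniformly as soon as $\varrho_1^2+\varrho_0^2/(4\kappa^2)\le\eta^2$; choosing $\varrho_1=(\eta^2-\varrho_0^2/4\kappa^2)^{1/2}$ (legitimate for $0<\varrho_0<2\kappa\eta$) establishes both \eqref{estimate1} and the stated lower bound on $\varrho_1$.
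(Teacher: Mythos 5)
Your proof is correct and follows the same overall route as the paper's (pass to polar coordinates around $\bar v$, reduce to a one-dimensional Gaussian integral in the radial variable, convert to error functions, and land on the same threshold $\eta=\sqrt{2\Theta_1/m_1}\,\mathrm{erf}^{-1}(1/2)$), but it differs in two instructive ways. First, the paper merely asserts that the radial inequality $\int_0^{\varrho_1} r e^{-ar^2}(r^2+\xi^2)^{-1/2}\,\d r\le \tfrac12\int_0^{\infty} r e^{-ar^2}(r^2+\xi^2)^{-1/2}\,\d r$ is a sufficient condition ``independent of $\bar v$''; since the angular integral $\int_0^{2\pi}\exp(-2ar(v_1\cos\theta+v_2\sin\theta))\,\d\theta=2\pi I_0(2a|\bar v|r)$ depends on $r$, it cannot simply be pulled out, and some argument is needed. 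Your observation that $r\mapsto I_0(2a|\bar v|r)$ is non-decreasing, combined with the Chebyshev-type chain $\int_0^{\varrho_1}h\,\d\nu\le h(\varrho_1)\,\nu([0,\varrho_1])\le h(\varrho_1)\,\nu([\varrho_1,\infty))\le\int_{\varrho_1}^{\infty}h\,\d\nu$, supplies exactly the missing justification, so on this point your write-up is more complete than the printed proof. Second, for the quantitative bound you shortcut the paper's construction: instead of introducing the function $z(\xi)$ defined by $\sqrt{a(z^2+\xi^2)}=\mathrm{erf}^{-1}\bigl(\tfrac12+\tfrac12\mathrm{erf}(\sqrt a\,\xi)\bigr)$ and studying its minimum via its derivative, you drop the non-negative term $\tfrac12\mathrm{erf}(\sqrt a\,\xi)$ and impose the cruder sufficient condition $\mathrm{erf}(\sqrt a\,R)\le\tfrac12$, i.e.\ $R=\sqrt{\varrho_1^2+\xi^2}\le\eta$, which is cleaner and yields the identical lower bound $\varrho_1\ge(\eta^2-\varrho_0^2/4\kappa^2)^{1/2}$. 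The only thing this costs you is the first assertion of the lemma in the regime $\varrho_0\ge 2\kappa\eta$: the statement claims a positive $\varrho_1$ for \emph{every} $\varrho_0>0$, and your choice degenerates to $0$ at $\varrho_0=2\kappa\eta$. This is easily repaired with your own exact reformulation $2\,\mathrm{erf}(\sqrt a\,R)\le 1+\mathrm{erf}(\sqrt a\,\xi)$: for each fixed $\xi$ it holds for all $R$ up to some $R(\xi)$ satisfying $\mathrm{erf}(\sqrt a\,R(\xi))=\tfrac12+\tfrac12\mathrm{erf}(\sqrt a\,\xi)>\mathrm{erf}(\sqrt a\,\xi)$, hence $R(\xi)>\xi$ strictly and $z(\xi)=(R(\xi)^2-\xi^2)^{1/2}>0$; continuity of $z$ on the compact interval $[0,\varrho_0/2\kappa]$ then gives a positive minimum, which is precisely what the paper's argument via $z'(\zeta)=0$ accomplishes. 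You should add that one line to cover large $\varrho_0$; everything else stands.
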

\begin{proof} We assume without loss of generality that $u_1=0$.
Let $\varrho_0 >0$ be given. Let us fix
$\bar{v}=(\bar{v}_1,\bar{v}_2) \in \mathbb{R}^2$ and $\xi \in
[0,\varrho_0/2\kappa]$. Using polar coordinates, it is clear that
\begin{multline*}
\left(\frac{m_1}{2 \pi \Theta_1} \right)^{-3/2} \, \int_{\{|\bar{w}-\bar{v}|
\le \varrho_1\}} \frac{\mathcal{M}_1(\bar{w})}{\left(|\bar{v}-\bar{w}|^2+
\xi^2\right)^{1/2}} \, \d \bar{w} = \\ \exp(-a |\bar{v}|^2)\int_0^{\varrho_1}
\dfrac{r\exp(-ar^2)}{\sqrt{r^2+\xi^2}}\d r \int_0^{2\pi} \exp\Big(-2ar(v_1\cos
\theta+v_2\sin \theta)\Big)\d \theta
 \end{multline*}
 where $a=m_1/(2\Theta_1)$.
Therefore,  a sufficient condition (independent of $\bar{v}$) for
\eqref{estimate1} to hold  is that
$$\int_0^{\varrho_1}
\dfrac{r\exp(-ar^2)}{\sqrt{r^2+\xi^2}}\d r \leq \dfrac{1}{2}
\int_0^\infty \dfrac{r\exp(-ar^2)}{\sqrt{r^2+\xi^2}}\d r, \qquad
\forall \xi \in [0,\varrho_0/2\kappa].$$ It is not difficult to see
that this is equivalent to
$$\mathrm{erf}\left(\sqrt{a(\varrho_1^2+\xi^2)}\right)- \mathrm{erf}(\sqrt{a}
\xi) \leq \dfrac{1}{2}-\dfrac{1}{2}\mathrm{erf}(\sqrt{a}\xi), \qquad
\forall \xi \in [0,\varrho_0/2\kappa]$$ where $\mathrm{erf}$ is the
error function $\mathrm{erf}(x)=\frac{2}{\sqrt{\pi}}\ds\int_0^x
\exp(-t^2)\d t,$ $x \geq 0.$ This allows to define a function:
$$z:\:\xi \in \R_+ \mapsto z(\xi)$$
where $z(\xi)$ is the nonnegative solution to  the identity
\begin{equation}\label{zxi}
\sqrt{a(z^2+\xi^2)}=\mathrm{erf}^{-1}\left(\frac{1}{2}+\frac{1}{2}\mathrm{erf}(\sqrt{a
}\xi)\right)\end{equation}
 where $\mathrm{erf}^{-1}$ is the inverse error
function. Clearly the Lemma is proven provided
$$\varrho_1:= \min\{ z(\xi),\;\xi \in [0,\varrho_0/2\kappa]\} >0.$$
Note that, according to \eqref{zxi}, the function $z(\cdot)$ is
continuously differentiable and there is some $\zeta \in
[0,\varrho_0/2\kappa]$ such that $\min\{z(\xi),\;\xi \in
[0,\varrho_0/2\kappa]\}=z(\zeta).$ In particular $z'(\zeta)=0$ and
one checks, thanks to \eqref{zxi}, that
$$z'(\xi)=\frac{1}{2}\sqrt{z^2+\xi^2}\exp(az^2)-\xi, \qquad \forall \xi \geq 0.$$
In particular, $z'(\zeta)=0$ is equivalent to
\begin{equation}\label{zeta}
4\zeta^2 \exp(-a z^2(\zeta))=z^2(\zeta)+\zeta^2,
\end{equation}
and one sees that $z^2(\zeta)=0$ should imply $\zeta=0$  whereas,
according to \eqref{zxi}, $z(0) \neq 0$. Consequently, all the local
extrema of $z $ are positive. Therefore,
$$\varrho_1= z(\zeta) =\min\{ z(\xi) ,\;0 \leq \xi \leq
\varrho_0/2\kappa\} >0$$ which achieves to prove that
\eqref{estimate1} holds true for some $\varrho_1 >0$. It remains now
to provide some estimate for $\varrho_1.$ Precisely, defining
$$\eta=\dfrac{1}{\sqrt{a}}\mathrm{erf}^{-1}\left(\frac{1}{2}\right),$$
 we see from \eqref{zxi} that $z^2(\xi)+\xi^2 \geq \eta^2,$ for any $\xi \geq
 0$, so that $\varrho_1^2 \geq
 \eta^2-\frac{\varrho_0^2}{4\kappa^2}$ for any $\varrho_0 \in (0,
 2\kappa \eta)$, which achieves to prove the lemma.
\end{proof}

\begin{nbA}
According to the above Lemma,  with the choice of
$\varrho_0=\frac{2\kappa \eta}{\sqrt{5}}$, one obtains that
 $\min\left(\frac{\varrho_0}{2\kappa},\frac{\varrho_1}{2}\right) \geq
\frac{\eta}{\sqrt{5}}.$  \end{nbA}


\end{document}